\DeclareMathOperator{\Con}{Con}
\DeclareMathOperator{\Fil}{Fil}
\newtheorem{theorem}{Theorem}[section]
\newtheorem{definition}[theorem]{Definition}
\newtheorem{lemma}[theorem]{Lemma}
\newtheorem{proposition}[theorem]{Proposition}
\newtheorem{remark}[theorem]{Remark}
\newtheorem{example}[theorem]{Example}
\newtheorem{corollary}[theorem]{Corollary}
\title{An algebraic analysis of implication in non-distributive logics}
\author{Ivan~Chajda, Kadir~Emir, Davide~Fazio, Helmut~L\"anger, \\
	Antonio~Ledda and Jan~Paseka}
\begin{document}
	
	\maketitle
	
	\begin{abstract}
		In this paper, we introduce the concept of a (lattice) skew Hilbert algebra as a natural generalization of Hilbert algebras. This notion allows a unified treatment of several structures of prominent importance for mathematical logic, e.g.\ (generalized) orthomodular lattices, and MV-algebras, which admit a natural notion of implication. In fact,  it turns out that skew Hilbert algebras play a similar role for (strongly) sectionally pseudocomplemented posets as Hilbert algebras do for relatively pseudocomplemented ones. We will discuss basic properties of closed, dense, and weakly dense elements of skew Hilbert algebras, their applications, and we will provide some basic results on their structure theory.
		
	\end{abstract}
	
	{\bf AMS Subject Classification:} 06D15, 03G12, 03G10.
	
	{{\bf Keywords:} Hilbert algebras, skew Hilbert algebras, pseudocomplemented lattices, sectionally pseudocomplemented lattices, orthomodular lattices, implication algebras}
	
	\section{Introduction}
	
	D. Hilbert \cite{HI01} was the first to single out the importance of the implicative fragment of classical logic, namely the calculus obtained from classical propositional logic by assuming as axioms a given set of classical tautologies containing just the connective implication. This logical system, later called \emph{propositional calculus of positive implication} \cite{HI02}, revealed to be amenable of smooth algebraic investigations by means of Henkin's implicative models \cite{HE01}. Their duals,  known as Hilbert algebras after A. Diego's works on the topic \cite{D66a, D66b}, have been the subject of intensive and incredibly deep inquiries over the past years, see e.g.\ \cite{BUS,BUS02,Ce01}.\\ 
	In this article, we aim at generalizing the concept of a Hilbert algebra to a context in which the ``distributivity of implication over itself'' ((H5) in Definition \ref{def:HA}) need not hold in general. Indeed, Hilbert algebras provide the equivalent algebraic semantics in the sense of Blok and Pigozzi \cite{BlPi} to the implication fragment of intuitionistic propositional logic. Therefore, if we stick to the original notion, a relevant deal of prominent structures for algebraic logic, e.g.\ generalized orthomodular lattices, orthomodular lattices and MV-algebras, that indeed admit a natural notion of implication, are left out. It seems therefore quite natural to try to extend the very notion of Hilbert algebra to a wider framework, that may lend to encompass under a general umbrella those structures. We will see that skew Hilbert algebras play a similar role for (strongly) sectionally pseudocomplemented posets, i.e.\ posets with a top element and in which every principal order filter is a pseudocomplemented poset,  as Hilbert algebras do for relatively pseudocomplemented ones. In particular, we highlight the connections between skew Hilbert algebras and orthomodular implication algebras \cite{CL01}. Precisely, we will show that orthomodular implication algebras are indeed (term equivalent to) a subvariety of skew Hilbert algebras, axiomatized by two further identities. Then, making use of this result, we axiomatize the class of generalized orthomodular lattices within the class of skew Hilbert algebras, introduced by Janowitz in \cite{Ja68} (see also \cite{Be85}), and we show that they form in fact a variety. As a consequence, we obtain a characterization of orthomodular lattices in the framework of skew Hilbert algebras. Subsequently, we show that (strong) skew Hilbert algebras can be regarded as proper generalizations of lattices with sectional antitone involutions (see \cite{CHK01}) representing a common generalization of orthmodular lattices and MV-algebras. As a consequence, we will be able to frame algebras arising from seemingly different contexts into a common, general landscape.\\ 
	Let us now summarize the discourse of the paper. In Section \ref{sec: basic} we dispatch all the necessary preliminaries on Hilbert algebras, bounded posets with operations, and (sectionally) pseudocomplemented posets. In Section \ref{sec: SHA}, we discuss the concept of (lattice) skew Hilbert algebra and we provide some examples thereof. Section \ref{sec:app} is devoted to show how some structures having an underlying poset with sectional antitone operations, like e.g.\ orthomodular implication algebras, and lattices with sectional antitone involutions, can be framed within the theory of skew Hilbert algebras. In Section \ref{sec: specele}, we will describe basic properties of closed, dense, and weakly dense elements of skew Hilbert algebras. Finally, in Section \ref{sec: structheory} we will investigate the structure theory for the variety of lattice skew Hilbert algebras. Subsequently, we will introduce the concept of a deductive system on a skew Hilbert algebra. A full characterization thereof will follow. Finally, we introduce a notion of ``order-compatible'' congruence for skew Hilbert algebras which need not be lattice-ordered. In turn, we show that, also in this case, many of the aforementioned results hold.

	\section{Basic concepts}\label{sec: basic}
	
	The concept of Hilbert algebra was introduced by A.~Diego (see \cite{D66a, D66b}) and studied intensively by S.~Rudeanu (see \cite{R85, R07}). Let us recall its definition.
	\begin{definition}\label{def:HA}
		A {\em Hilbert algebra} is an algebra $(A,*,1)$ of type $(2,0)$ satisfying the following identities and quasi-identities for all $x,y,z\in A$:
		\begin{enumerate}[{\rm(H1)}]
			\item $x*x\approx1$,
			\item if $x*y=y*x=1$ then $x=y$,
			\item if $x*y=y*z=1$ then $x*z=1$,
			\item $x*(y*x)\approx1$,
			\item $(x*(y*z))*((x*y)*(x*z))\approx1$.
		\end{enumerate}
	\end{definition}

	Because of (H1) -- (H3) the binary relation $\leq$ on A defined by 
	\[
	x\leq y \text{ if and only if }x*y=1 \,\, (x,y\in A)
	\]
	is a partial order relation on $A$. From (H1) and (H4) we conclude $x*1\approx x*(x*x)\approx1$, thus $1$ is the top element of $(A,\leq)$. Hence, a Hilbert algebra can be alternatively defined as a poset $(A,\leq,*,1)$ with top element $1$ and with a binary operation $*$ satisfying the following conditions:
	\begin{itemize}
		\item $x\leq y$ if and only if $x*y=1$,
		\item $x\leq y*x$,
		\item $x*(y*z)\leq(x*y)*(x*z)$,
	\end{itemize}
	see e.g.\ \cite{D66a, D66b, R85, R07}.

	Let us now recall several useful concepts from the theory of posets. Other concepts used in this paper are taken from monographs \cite{B, G}.
	
	Let $(A,\leq)$ be a poset. An element $c$ of $A$ is called the {\em relative pseudocomplement} of $a$ with respect to $b$, in symbols $c=a\circ b$, if $c$ is the greatest element $x$ of $A$ satisfying $L(a,x)\subseteq L(b)$. If $x\circ y$ exists for all $x,y\in A$ then the poset $(A,\leq)$ is called {\em relatively pseudocomplemented} and $\circ$ is called {\em relative pseudocomplementation}.
	
	Relatively pseudocomplemented posets were investigated by three of the present authors in \cite{CLP20}. Relatively pseudocomplemented posets which are meet-semilattices are often called {\em implicative semilattices} or {\em Brouwerian semilattices}.
	
	It was shown by S.~Rudeanu \cite{R85} that the class of relatively pseudocomplemented posets is a proper subclass of the class of Hilbert algebras. In fact, a Hilbert algebra $(A,*,1)$ is a relatively pseudocomplemented poset if and only if for all $x,y\in A$, $x*y$ is the relative pseudocomplement of $x$ with respect to $y$.
	
	If a relatively pseudocomplemented poset $(A,\leq)$ is a lattice then it is called a {\em relatively pseudocomplemented lattice}, see \cite B. In such a case, for all $x,y\in A$, $x\circ y$ is the greatest element $z$ of $A$ satisfying $x\land z\leq y$.
	
	It is well-known that every relatively pseudocomplemented lattice is distributive, see e.g.\ \cite G. In order to extend relative pseudocomplementation to non-distributive lattices, the first author introduced in \cite C so-called {\em sectionally pseudocomplemented lattices}. Recall that a lattice $(L,\lor,\land)$ is called {\em sectionally pseudocomplemented} if every of its intervals $[y)$ is pseudocomplemented, or formally, if for every $a,b\in L$ there exists a greatest element $c$ of $L$ satisfying $(a\lor b)\land c=b$. This element $c$ is called the {\em sectional pseudocomplement} of $a$ with respect to $b$ and will be denoted by $a*b$.
	
	Of course, every relatively pseudocomplemented lattice is sectionally pseu\-do\-com\-ple\-men\-ted but, for example, the five-element non-modular lattice $\mathbf N_5$ is sectionally pseudocomplemented but not relatively pseudocomplemented, see \cite{C,CLPa} for examples and details.
	
	Let $(P,\leq)$ be a poset, $a,b\in P$ and $A,B\subseteq P$. We define the lower and upper cone of $A$ as follows:
	\begin{align*}
		L(A) & :=\{x\in P\mid x\leq A\}, \\
		U(A) & :=\{x\in P\mid x\geq A\}.
	\end{align*}
	Here $x\leq A$ means $x\leq y$ for all $y\in A$ and, similarly, $x\geq A$ means $x\geq y$ for all $y\in A$. The expression $A\leq B$ means $x\leq y$ for all $x\in A$ and $y\in B$. Instead of $L(\{a,b\})$ we simply write $L(a,b)$.
	
	The concept of a sectionally pseudocomplemented lattice was generalized to posets in \cite V as follows: Let $(A,\leq)$ be a poset and $a,b\in A$. An element $c$ of $A$ is called the {\em sectional pseudocomplement} of $a$ with respect to $b$ if it is the greatest element $x$ of $A$ satisfying $L(U(a,b),x)=L(b)$. This element $c$ will be denoted by $a*b$. A poset $(A,\leq)$ is called {\em sectionally pseudocomplemented} if for all $x,y\in A$ there exists $x*y$. Of course, in the case of lattices this concept coincides with the above one introduced for lattices. A unary operation $'$ on $A$ is called
	\begin{itemize}
		\item {\em antitone} if $x\leq y$ implies $y'\leq x'$,
		\item an {\em involution} if $x''\approx x$,
	\end{itemize}
	for all $x,y\in A$. A unary operation $'$ on a bounded poset $(A,\leq,0,1)$ is called
	\begin{itemize}
		\item a {\em complementation} if $L(x,x')=\{0\}$ and $U(x,x')=\{1\}$,
	\end{itemize}
	for all $x\in A$. An {\em orthoposet} is a bounded poset $(A,\leq,{}',0,1)$ with an antitone involution $'$ which is a complementation. An {\em ortholattice} is a lattice which is an orthoposet.
	
	Let us recall the following result from \cite{CLPa}:
	
	\begin{proposition}\label{prop1}
		The class of sectionally pseudocomplemented lattices forms a variety which is determined by the lattice axioms and the following identities:
		\begin{itemize}
			\item $z\lor y\leq x*((x\lor y)\land(z\lor y))$,
			\item $(x\lor y)\land(x*y)\approx y$.
		\end{itemize}
	\end{proposition}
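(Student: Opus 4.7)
The plan is to prove the equivalence between a lattice $(L,\lor,\land,*)$ being sectionally pseudocomplemented (with $*$ computing the sectional pseudocomplement) and satisfying the two displayed identities; this equational characterization will automatically show that the class is a variety.

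For the forward direction, I would unpack the definition: $x*y$ is the greatest $c\in L$ such that $(x\lor y)\land c=y$. The second identity is then just this defining equation, read off directly. For the first identity, the idea is to set $b:=(x\lor y)\land(z\lor y)$ and show that $z\lor y$ belongs to the family over which the maximum defining $x*b$ is taken, so that $z\lor y\leq x*b$ follows from maximality. Concretely, the inequality $b\leq x\lor y$ yields $x\lor b\leq x\lor y$, hence
\[
(x\lor b)\land(z\lor y)\leq(x\lor y)\land(z\lor y)=b,
\]
while the reverse inequality is immediate from $b\leq x\lor b$ and $b\leq z\lor y$. Thus $(x\lor b)\land(z\lor y)=b$, witnessing membership.

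For the converse, assume the two identities hold, fix $a,b\in L$, and set $c_{0}:=a*b$. The second identity instantly gives $(a\lor b)\land c_{0}=b$, so $c_{0}$ is a candidate. The crucial step is establishing maximality: given any $c$ with $(a\lor b)\land c=b$, the key observation is that this equation forces $b\leq c$, so that $c\lor b=c$ and therefore $(a\lor b)\land(c\lor b)=b$. Substituting $(x,y,z)=(a,b,c)$ in the first identity then produces
\[
c=c\lor b\leq a*\bigl((a\lor b)\land(c\lor b)\bigr)=a*b=c_{0},
\]
which is exactly what is needed.

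I do not anticipate any serious obstacle; the main conceptual point is the little remark that $(a\lor b)\land c=b$ forces $b\leq c$, which is what allows the first identity to be applied with $z=c$ in order to extract maximality. Once this trick is in place, everything else is a short lattice manipulation. Since both directions will have been established in a signature with $(\lor,\land,*)$ using only equational axioms (the lattice axioms together with the two displayed identities), the class of sectionally pseudocomplemented lattices is a variety.
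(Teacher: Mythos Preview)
Your argument is correct. Both directions are handled cleanly: in the forward direction your computation $(x\lor b)\land(z\lor y)=b$ with $b=(x\lor y)\land(z\lor y)$ is exactly what is needed to invoke the maximality clause in the definition of $x*b$, and in the converse direction the observation that $(a\lor b)\land c=b$ entails $b\leq c$ (since $b=(a\lor b)\land c\leq c$) is precisely the trick that lets you instantiate the first identity with $z=c$ and read off $c\leq a*b$.

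As for comparison with the paper: the paper does not give a proof of this proposition at all. It is stated as a result \emph{recalled} from \cite{CLPa} (Chajda, L\"anger, Paseka, ``Sectionally pseudocomplemented posets''), so there is nothing in the present paper to compare your proof against. Your write-up would serve as a self-contained verification of the cited result.
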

	
	Also the next two results were proved in \cite{CLPa}.
	
	\begin{proposition}\label{prop2}
		Let $(P,\leq,*,1)$ be a sectionally pseudocomplemented poset with top element $1$. Then the following hold for all $x,y,z\in P$:
		\begin{enumerate}[{\rm(i)}]
			\item $x\leq y$ if and only if $x*y=1$,
			\item $1*x\approx x$,
			\item $x*(y*x)\approx1$,
			\item if $y*x=1$ then $x*((x*y)*y)=1$,
			\item if $x*y=1$ then $(y*z)*(x*z)=1$.
		\end{enumerate}
	\end{proposition}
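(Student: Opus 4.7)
The plan is to prove each of (i)--(v) by unpacking the defining property of the sectional pseudocomplement: $a*b$ is the greatest element $c \in P$ satisfying $L(U(a,b),c) = L(b)$. Two elementary observations will be used repeatedly: for any $y \in P$ one has $L(U(y)) = L(y)$ (because $y$ is the least element of $U(y)$), and $y \leq x*y$ (because $x*y \in U(x,y)$ is, in particular, an upper bound of $y$). All items other than (ii) will be reduced via (i) to an inequality, which will then be established by exhibiting a witness that satisfies the defining equation of the relevant sectional pseudocomplement and invoking its maximality.

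For (i), if $x \leq y$ then $U(x,y) = U(y)$, so the defining equation for $x*y$ collapses to $L(y) \cap L(c) = L(y)$, i.e.\ $y \leq c$; the greatest such $c$ is $1$, hence $x*y = 1$. Conversely, $x*y = 1$ forces $L(U(x,y)) = L(y)$, and since $x \in L(U(x,y))$ trivially, we get $x \leq y$. Item (ii) is immediate: $U(1,x) = \{1\}$ reduces the defining equation for $1*x$ to $L(c) = L(x)$, whose greatest solution is $x$. For (iii), by (i) it suffices to show $x \leq y*x$; but $x \in L(U(y,x))$, so $L(U(y,x)) \cap L(x) = L(x)$, meaning $x$ itself satisfies the defining equation for $y*x$, and maximality gives $x \leq y*x$.

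For (iv), assume $y \leq x$ (equivalently $y*x = 1$ by (i)) and aim for $x \leq (x*y)*y$. I would verify directly that $x$ satisfies $L(U(x*y,y),x) = L(y)$. The inclusion $L(y) \subseteq L(U(x*y,y),x)$ follows from $y \leq x*y$, $y \leq y$, and $y \leq x$. For the reverse inclusion, take $w$ lower-bounding $U(x*y,y) \cup \{x\}$. Then $w \leq x*y$ (since $x*y \in U(x*y,y)$); moreover, using $y \leq x$ one has $U(x,y) = U(x)$, so $w \leq x$ implies $w \in L(U(x,y))$. Plugging into the defining identity $L(U(x,y)) \cap L(x*y) = L(y)$ for $x*y$ yields $w \leq y$, as required; maximality of $(x*y)*y$ then gives $x \leq (x*y)*y$, and (i) finishes the job.

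Finally, for (v), assume $x \leq y$ and prove $y*z \leq x*z$ by verifying that $y*z$ satisfies the defining equation $L(U(x,z),y*z) = L(z)$ for $x*z$. One inclusion uses $U(y,z) \subseteq U(x,z)$, hence $L(U(x,z)) \subseteq L(U(y,z))$, combined with the defining identity $L(U(y,z)) \cap L(y*z) = L(z)$ for $y*z$. The reverse inclusion follows from $z \in L(U(x,z))$ and $z \leq y*z$. Maximality of $x*z$ then yields $y*z \leq x*z$, i.e.\ $(y*z)*(x*z) = 1$. I expect the principal obstacle to be the reverse-inclusion step in (iv), where the hypothesis $y \leq x$ must be used \emph{precisely} to collapse $U(x,y)$ to $U(x)$ so as to recover membership in $L(U(x,y))$ from $w \leq x$; the remaining parts are essentially formal manipulations of upper and lower cones.
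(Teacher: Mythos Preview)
Your argument is correct in substance. The paper itself does not supply a proof of this proposition; it simply records that the result was established in \cite{CLPa}. Your direct verification via the defining property of the sectional pseudocomplement is exactly the natural route, and each of the five items is handled soundly.

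One small correction: in your preliminary observations you justify $y\le x*y$ by writing ``because $x*y\in U(x,y)$ is, in particular, an upper bound of $y$''. That reasoning is not valid --- in general $x*y$ need \emph{not} lie in $U(x,y)$ (for instance, in a sectionally pseudocomplemented lattice, if $x\not\le y$ then $(x\lor y)\land(x*y)=y<x\lor y$, so $x*y\not\ge x\lor y$). The correct justification is precisely the one you give later in (iii): since $y\in L(U(x,y))$, the element $c=y$ satisfies $L(U(x,y),c)=L(y)$, and maximality of $x*y$ then yields $y\le x*y$. With that parenthetical amended, the proof stands as written.
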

	
	\begin{remark}
		By {\rm(i)} and {\rm(iii)} we derive $x\leq y*x$ and hence also $x\leq(y*x)*x$. Therefore, if $x\leq y$ then $x\leq y\leq(x*y)*y$ whence $x\leq(x*y)*y$. By {\rm(iv)} we have that $x\leq(x*y)*y$ provided $x$ and $y$ are comparable with each other. In order to avoid this rather restrictive condition, we define: A sectionally pseudocomplemented poset $(P,\leq,*,1)$ with top element $1$ is called a {\em strongly sectionally pseudocomplemented poset} if it satisfies the identity for all $x,y \in P$:
		\begin{enumerate}
			\item[{\rm(vi)}] $x\leq(x*y)*y$.
		\end{enumerate}
	\end{remark}
	
	\begin{proposition}\label{prop3}
		An algebra $(P,*,1)$ of type $(2,0)$ can be organized into a sectionally pseudocomplemented poset if and only if it satisfies
		\begin{itemize}
			\item $x*x\approx x*1\approx1$,
			\item $x*y=y*x=1$ implies $x=y$,
			\item $x*y=y*z=1$ implies $x*z=1$,
			\item $L(U(x,y),x*y)\approx L(y)$,
			\item $L(U(x,y),z)=L(y)$ implies $z*(x*y)=1$.
		\end{itemize}
		The last two conditions are formulated with respect to the partial order relation $\leq$ defined by $x\leq y$ if and only if $x*y=1$ {\rm(}$x,y\in P${\rm)}.
	\end{proposition}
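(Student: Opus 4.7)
The plan is to prove the equivalence by treating necessity and sufficiency separately, both of which are essentially reformulations of the definition of sectional pseudocomplement together with the order-theoretic content already established in Proposition \ref{prop2}.

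For the necessity direction, I would assume $(P,\leq,*,1)$ is a sectionally pseudocomplemented poset with top $1$ and verify each clause: $x*x=1$ and $x*1=1$ follow from $x\leq x$ and $x\leq 1$ via Proposition \ref{prop2}(i); the antisymmetry-like and transitivity-like quasi-identities translate directly into antisymmetry and transitivity of $\leq$ through the same equivalence $x\leq y\iff x*y=1$. The identity $L(U(x,y),x*y)=L(y)$ is precisely the defining condition that $x*y$ is a solution to the sectional pseudocomplementation equation, and the final clause $L(U(x,y),z)=L(y)\Rightarrow z*(x*y)=1$ encodes the maximality of $x*y$ among such solutions, since $z\leq x*y$ is equivalent to $z*(x*y)=1$.

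For the sufficiency direction, I would start by defining $\leq$ on $P$ by $x\leq y\iff x*y=1$ and check that this is a partial order: reflexivity comes from $x*x=1$, antisymmetry from the second quasi-identity, and transitivity from the third. The identity $x*1=1$ shows that $1$ is the greatest element. The bulk of the argument is then to show that, for each $x,y\in P$, the element $x*y$ is the sectional pseudocomplement of $x$ with respect to $y$ in the sense of the paper's definition. Axiom four provides $L(U(x,y),x*y)=L(y)$, so $x*y$ is at least \emph{some} element satisfying the defining equation. Axiom five tells us that whenever an element $z$ satisfies the same equation $L(U(x,y),z)=L(y)$, then $z*(x*y)=1$, i.e.\ $z\leq x*y$; hence $x*y$ is the greatest such element. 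Combining existence and maximality shows that $(P,\leq,*,1)$ is indeed a sectionally pseudocomplemented poset whose sectional pseudocomplementation agrees with the given operation $*$, and by construction $1$ is its top element.

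The only genuinely nonobvious step is recognizing that the two lower-cone conditions must be formulated as one identity plus one quasi-implication (rather than a single biconditional), because the ``only-if'' half carries the information that $x*y$ is the \emph{greatest} element satisfying $L(U(x,y),\cdot)=L(y)$, whereas the ``if'' half just asserts it is a solution; writing them separately is what makes the axiomatization a faithful equational/quasi-equational description. Apart from this, the proof is a routine translation between the arrow presentation and the order-cone presentation, using only the defined partial order and the basic compatibility $x\leq y\iff x*y=1$.
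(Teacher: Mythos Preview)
Your argument is correct and is precisely the natural definitional unpacking one expects: the first three bullets encode that $\leq$ defined by $x\leq y\iff x*y=1$ is a partial order with top $1$, while the fourth and fifth bullets jointly assert that $x*y$ is a solution of $L(U(x,y),z)=L(y)$ and the greatest such solution, which is exactly the definition of sectional pseudocomplement given in the paper. Note, however, that the paper does not supply its own proof of this proposition; it is quoted from \cite{CLPa}, so there is no in-paper argument to compare against. Your write-up is the standard verification and would serve perfectly well as the omitted proof.
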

	
	\section{Skew Hilbert algebras}\label{sec: SHA}
	
	As mentioned above, the class of relatively pseudocomplemented posets is a proper subclass of the class of Hilbert algebras. The aim of this section is to discuss the concept of {\em skew Hilbert algebras}. We will see in Section \ref{sec:app} that skew Hilbert algebras play a similar role with respect to orthomodular lattices and MV-algebras as Hilbert algebras do for the implicative fragment of Intuitionistic Propositional Logic.
	
	
	\begin{definition}\label{def3}
		A {\em skew Hilbert algebra} is a poset $\mathbf S=(S,\leq,*,1)$ with a binary operation $*$ and a constant $1$ satisfying the following conditions:
		\begin{enumerate}[{\rm(S1)}]
			\item $x\leq y$ if and only if $x*y=1$,
			\item if $y*x=1$ then $x*((x*y)*y)=1$,
			\item if $x*y=1$ then $(y*z)*(x*z)=1$,
			\item $L(U(x,y),x*y)=L(y)$.
		\end{enumerate}
		If $\mathbf S$ satisfies the identity
		\begin{enumerate}
			\item[{\rm(S2')}] $x*((x*y)*y)\approx1$
		\end{enumerate}
		instead of {\rm(S2)} then it is called a {\em strong skew Hilbert algebra}. If $(S,\leq)$ is a lattice and $\mathbf S$ satisfies conditions {\rm(S1)}, {\rm(S2')}, {\rm(S3)} and {\rm(S4)} then $\mathbf S$ is called a {\em lattice skew Hilbert algebra}.
	\end{definition}
	
	It is worth noticing that every lattice skew Hilbert algebra is strong.
	
	From (S4) we obtain $L(1*x)=L(U(1,x),1*x)=L(x)$, i.e.
	\begin{enumerate}
		\item[(1)] $1*x\approx x$
	\end{enumerate}
	and $x\in L(x)=L(U(y,x),y*x)$ whence $x\leq y*x$, i.e.
	\begin{enumerate}
		\item[(2)] $x*(y*x)\approx1$.
	\end{enumerate}
	From (S1) and (2) we conclude $x*1\approx x*(x*x)\approx1$, i.e.
	\begin{enumerate}
		\item[(3)] $x*1\approx1$
	\end{enumerate}
	and hence $1$ is the top element of the poset $(S,\leq)$.
	
	\begin{example}\label{rem1}
		Every poset with top element can be converted into a strong skew Hilbert algebra. Namely, if $(S,\leq,1)$ is a poset with top element $1$ and $*$ denotes the binary operation on $S$ defined by
		\[
		x*y:=\left\{
		\begin{array}{ll}
			1, & \text{if }x\leq y; \\
			y, & \text{otherwise,}
		\end{array}
		\right.
		\]
		then $(S,\leq,*,1)$ is a strong skew Hilbert algebra.
	\end{example}
	
	\begin{remark}\label{rem2}
		If $(S,\leq,*,1)$ is a skew Hilbert algebra then according to {\rm(S4)}
		\[
		L(U(x,y),x*y)=L(y)
		\]
		which shows that there exists the infimum $U(x,y)\land(x*y)$ and hence the previous is equivalent to the equality
		\[
		U(x,y)\land(x*y)=y.
		\]
		Thus, in case $x\geq y$ we obtain $x\land(x*y)=y$.
	\end{remark}
	
	Concerning the relationship between skew Hilbert algebras, Hilbert algebras and sectionally pseudocomplemented posets, the following can be said.
	
	\begin{remark}\label{rempsecseudo}
		\
		\begin{itemize}
			\item A skew Hilbert algebra is a Hilbert algebra if and only if it satisfies {\rm(H5)}.
			\item A {\rm(}strong{\rm)} skew Hilbert algebra is a {\rm(}strongly{\rm)} sectionally pseudocomplemented poset if and only if it satisfies the condition
			\[
			L(U(x,y),z)=L(y)\text{ implies }z*(x*y)=1.
			\]
			In other words, a skew Hilbert algebra is a sectionally pseudocomplemented poset if and only if $x*y$ is the sectional pseudocomplement of $x$ with respect to $y$.
		\end{itemize}
	\end{remark}
	
	Comparing Definition~\ref{def3} with Propositions~\ref{prop2} and \ref{prop3}, we conclude immediately that every (stron\-gly) sectionally pseudocomplemented poset can be considered as a (strong) skew Hilbert algebra. The following example shows in part (b) that the converse assertion need not be true.
	
	Further, it is a natural question if every skew Hilbert algebra whose underlying poset is a lattice is strong, i.e.\ if it satisfies identity (S2') automatically. The following example shows in part (a) that this is not the case.
	
	\begin{example}\label{ex1}
		\
		\begin{enumerate}[{\rm(a)}]
			\item If $\mathbf L=(L,\lor,\land)$ denotes the lattice visualized in Fig.~1
			
			\vspace*{-2mm}
			
			\begin{center}
				\setlength{\unitlength}{7mm}
				\begin{picture}(8,8)
					\put(3,1){\circle*{.3}}
					\put(1,3){\circle*{.3}}
					\put(5,3){\circle*{.3}}
					\put(3,5){\circle*{.3}}
					\put(5,5){\circle*{.3}}
					\put(7,5){\circle*{.3}}
					\put(5,7){\circle*{.3}}
					\put(3,1){\line(-1,1)2}
					\put(3,1){\line(1,1)4}
					\put(5,7){\line(-1,-1)4}
					\put(5,7){\line(1,-1)2}
					\put(5,3){\line(-1,1)2}
					\put(5,3){\line(0,1)4}
					\put(2.85,.25){$0$}
					\put(.3,2.85){$a$}
					\put(5.4,2.85){$b$}
					\put(2.3,4.85){$c$}
					\put(5.4,4.85){$d$}
					\put(7.4,4.85){$e$}
					\put(4.85,7.4){$1$}
					\put(3.2,-.75){{\rm Fig.~1}}
				\end{picture}
			\end{center}
			
			\vspace*{4mm}
			
			and $*$ the binary operation on $L$ is defined by
			\[
			\begin{array}{c|ccccccc}
				* & 0 & a & b & c & d & e & 1 \\
				\hline
				0 & 1 & 1 & 1 & 1 & 1 & 1 & 1 \\
				a & d & 1 & d & 1 & d & e & 1 \\
				b & a & a & 1 & 1 & 1 & 1 & 1 \\
				c & 0 & a & b & 1 & d & e & 1 \\
				d & a & a & e & c & 1 & e & 1 \\
				e & 0 & a & d & c & d & 1 & 1 \\
				1 & 0 & a & b & c & d & e & 1
			\end{array}
			\]
			then $(L,\leq,*,1)$ is a skew Hilbert algebra which is not strong since
			\[
			a\not\leq e=d*b=(a*b)*b.
			\]
			$(L,\leq,*,1)$ is not a Hilbert algebra since
			\[
			a*(0*e)=a*1=1\not\leq e=d*e=(a*0)*(a*e).
			\]
			$(L,\lor,\land)$ is not a sectionally pseudocomplemented lattice since the sectional pseudocomplement of $a$ with respect to $b$ does not exist.
			\item Let $\mathbf P=(P,\leq,1)$ denote the poset with top element $1$ visualized in Fig.~2
			
			\vspace*{-2mm}
			
			\begin{center}
				\setlength{\unitlength}{7mm}
				\begin{picture}(6,6)
					\put(3,1){\circle*{.3}}
					\put(5,1){\circle*{.3}}
					\put(1,3){\circle*{.3}}
					\put(3,3){\circle*{.3}}
					\put(5,3){\circle*{.3}}
					\put(3,5){\circle*{.3}}
					\put(3,1){\line(-1,1)2}
					\put(3,1){\line(0,1)4}
					\put(3,1){\line(1,1)2}
					\put(5,3){\line(-1,1)2}
					\put(5,3){\line(0,-1)2}
					\put(3,5){\line(-1,-1)2}
					\put(2.85,.25){$a$}
					\put(4.85,.25){$b$}
					\put(.3,2.85){$c$}
					\put(3.4,2.85){$d$}
					\put(5.4,2.85){$e$}
					\put(2.85,5.4){$1$}
					\put(2.2,-.75){{\rm Fig.~2}}
				\end{picture}
			\end{center}
			
			\vspace*{4mm}
			
			and $*$ the binary operation on $P$ defined by
			\[
			\begin{array}{c|cccccc}
				* & a & b & c & d & e & 1 \\
				\hline
				a & 1 & b & 1 & 1 & 1 & 1 \\
				b & c & 1 & c & d & 1 & 1 \\
				c & a & b & 1 & d & e & 1 \\
				d & a & b & c & 1 & e & 1 \\
				e & a & b & c & d & 1 & 1 \\
				1 & a & b & c & d & e & 1
			\end{array}
			\]
			Then $(P,\leq,*,1)$ is a skew Hilbert algebra. Clearly, it is not a lattice. And it is not a sectionally pseudocomplemented poset since there is no sectional pseudocomplement of $c$ with respect to $a$. Moreover, $(P,\leq,*,1)$ is not a strong skew Hilbert algebra since
			\[
			b\not\leq a=c*a=(b*a)*a.
			\]
		\end{enumerate}
	\end{example}
	
	We close this section by showing that lattice skew Hilbert algebras form a variety.
	
	\begin{theorem}
		Let $\mathbf L=(L,\lor,\land,*,1)$ be a lattice with a binary operation $*$ and a constant $1$. Then $\mathbf L$ is a lattice skew Hilbert algebra if and only if it satisfies the following identities:
		\begin{enumerate}[{\rm(L1)}]
			\item $x*(x\lor y)\approx1$,
			\item $x*((x*y)*y)\approx1$,
			\item $((x\lor y)*z)*(x*z)\approx1$,
			\item $(x\lor y)\land(x*y)\approx y$.
		\end{enumerate}
	\end{theorem}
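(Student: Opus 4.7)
The plan is to verify both directions by direct translation between the two axiom sets, using the observation that in any lattice the upper cone $U(x,y)$ has minimum $x\lor y$, so that
\[
L(U(x,y),x*y)=L(x\lor y,x*y)=L((x\lor y)\land(x*y)).
\]
This collapses the cone-theoretic axiom (S4) to a purely equational condition involving $\lor,\land,*$ alone, and will be used in both directions of the equivalence.

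For necessity, assume $\mathbf{L}$ is a lattice skew Hilbert algebra. Then (L1) is immediate from $x\leq x\lor y$ together with (S1); (L2) is literally (S2'); (L3) follows by applying (S3) with the instance $x*(x\lor y)=1$ supplied by (L1); and (L4) comes from (S4) combined with the displayed identity, using that $L(a)=L(b)$ forces $a=b$ in any poset.

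For sufficiency, assume (L1)--(L4). Then (S2') is simply (L2), and (S4) is obtained from (L4) via the displayed identity read in reverse. The key step is (S1): if $x\leq y$ then $x\lor y=y$ and (L1) yields $x*y=x*(x\lor y)=1$; conversely, if $x*y=1$, then (L4) collapses to $y=(x\lor y)\land 1=x\lor y$, so $x\leq y$. Once (S1) is in hand, (S3) follows at once: if $x*y=1$ then $x\leq y$, hence $x\lor y=y$, and (L3) delivers $(y*z)*(x*z)=((x\lor y)*z)*(x*z)=1$.

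The main obstacle is not really one of depth but of ordering: in the sufficiency direction one must establish (S1) \emph{before} using it to derive (S3), and the backward half of (S1) is precisely the point where (L4) is indispensable, since only (L4) lets one reconstruct the lattice order $x\leq y$ from the equation $x*y=1$. Everything else is bookkeeping.
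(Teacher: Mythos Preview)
Your proof is correct and mirrors the paper's own argument almost exactly: there too (L4) is identified with (S4) via the lattice reduction $L(U(x,y),x*y)=L((x\lor y)\land(x*y))$, (L2) with (S2'), (L1) together with (L4) is shown to yield (S1), and (L3) corresponds to (S3). The only implicit step in your sufficiency argument---that $(x\lor y)\land 1=x\lor y$, i.e.\ that $1$ is an upper bound---is equally tacit in the paper's proof and is derived there from (L1) and (L4) immediately after the theorem.
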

	
	\begin{proof}
		Identity (L4) is equivalent to (S4). If $x\leq y$ and (L1) holds then $x*y=x*(x\lor y)=1$. If, conversely, $x*y=1$ and (L4) holds then $x\leq(x\lor y)\land(x*y)=y$. This shows that (L1) and (L4) are equivalent to (S1) and (S4). Identity (L2) coincides with (S2'), and (L3) is equivalent to (S3).
	\end{proof}
	
	Analogously to the case of skew Hilbert algebras from (L4) we obtain
	\[
	1*x\approx(1\lor x)\land(1*x)\approx x,
	\]
	i.e.\ (1), and
	\[
	x=(y\lor x)\land(y*x)\leq y*x,
	\]
	i.e.\ (2), and from (L1) and (2) we conclude
	\[
	x*1\approx x*(x*(x\lor x))\approx x*(x*x)\approx1,
	\]
	i.e.\ (3), and hence $1$ is the top element of the lattice $(L,\lor,\land)$.
	
	\begin{corollary}
		The class $\mathcal V$ of lattice skew Hilbert algebras forms a variety determined by the identities for lattices and identities {\rm(L1)} -- {\rm(L4)}.
	\end{corollary}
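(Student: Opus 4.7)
The plan is to obtain the Corollary as an immediate consequence of the preceding Theorem. That Theorem shows that an algebra $(L,\lor,\land,*,1)$ of type $(2,2,2,0)$ is a lattice skew Hilbert algebra if and only if it satisfies the lattice identities together with (L1)--(L4). Since each of (L1)--(L4) is written in the form $s\approx t$, i.e.\ as an identity in the usual sense of universal algebra, and the lattice axioms are themselves equational, the class $\mathcal V$ is axiomatised purely by a collection of identities.

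The conceptual step, already carried out inside the Theorem, is the elimination from Definition~\ref{def3} of any reliance on the partial order $\leq$ and on quasi-identities (most visibly (S1), phrased as an ``if and only if''). In the lattice-ordered setting $x\leq y$ may be replaced by $x\lor y\approx y$, so the order relation disappears, and the implicational conditions (S1) and (S3) are traded for the equational versions (L1) and (L3), with (L4) doing the work of the other direction of (S1). Condition (S2') is already equational and reappears as (L2), and (S4) is equational once rewritten as (L4) via Remark~\ref{rem2}. It is precisely this translation that permits the passage from an axiomatisation by quasi-identities over a poset to a purely equational axiomatisation over the lattice signature.

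Once this is in hand, the Corollary follows by Birkhoff's HSP theorem: any class of algebras of a fixed similarity type defined by a set of identities is a variety. Hence $\mathcal V$, being axiomatised by the lattice identities together with (L1)--(L4), is a variety. No genuine obstacle arises; all the substantive work is packed into the Theorem, and the Corollary merely records the variety-theoretic consequence of the equational reformulation. If anything needed care, it would be confirming that the signature taken for $\mathcal V$ is the full lattice signature $(\lor,\land,*,1)$ rather than an order-theoretic one, but this is already implicit in the statement of the Theorem.
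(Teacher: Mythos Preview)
Your proposal is correct and matches the paper's approach: the Corollary is stated in the paper without proof, being an immediate consequence of the preceding Theorem together with the observation that the lattice axioms and {\rm(L1)}--{\rm(L4)} are all genuine identities, so that Birkhoff's theorem applies. Your additional commentary on how the Theorem replaces the order-theoretic and quasi-equational conditions of Definition~\ref{def3} by equations is accurate and simply spells out what the paper leaves implicit.
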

	
	It is evident that the variety of lattice Hilbert algebras and the variety of sectionally pseudocomplemented lattices (see Proposition~\ref{prop1}) are subvarieties of $\mathcal V$. Precisely, a lattice skew Hilbert algebra is a sectionally pseudocomplemented lattice if and only if it satisfies the identity $z\lor y\leq x*((x\lor y)\land(z\lor y))$. It is worth noticing that this identity is not satisfied by the lattice skew Hilbert algebra from Example~\ref{ex1} since
	\[
	e\lor0=e\not\leq b=c*b=c*(c\land e)=c*((c\lor0)\land(e\lor0)).
	\]
	
	\begin{remark}
		In contrast to Examples~\ref{ex1}, sectionally pseudocomplemented lattices satisfy the identity $x\leq(x*y)*y$. This can be seen as follows: If $(P,\lor,\land,*)$ is a sectionally pseudocomplemented lattice and $a,b\in P$ then
		\[
		((a*b)\lor b)\land(a\lor b)=(a*b)\land(a\lor b)=(a\lor b)\land(a*b)=b
		\]
		and hence
		\[
		a\leq a\lor b\leq(a*b)*b.
		\]
	\end{remark}
	
	\section{Applications}\label{sec:app}
	The aim of this section is providing several motivating examples of skew Hilbert algebras. In particular, we highlight the connections between skew Hilbert algebras and orthomodular implication algebras \cite{CL01}. Precisely, we will show that orthomodular implication algebras are indeed (term equivalent to) a subvariety of skew Hilbert algebras, axiomatized by two further identities. Then, making use of this result, we axiomatize the class of generalized orthomodular lattices within the class of skew Hilbert algebras, introduced by Janowitz in \cite{Ja68} (see also \cite{Be85}), and we show that they form in fact a variety. As a consequence, we obtain a characterization of orthomodular lattices in the framework of skew Hilbert algebras. Subsequently, we show that (strong) skew Hilbert algebras can be regarded as proper generalizations of lattices with sectional antitone involutions (i.e.\ basic algebras, see \cite{CHK01}) representing a common generalization of orthmodular lattices and MV-algebras.
	
	
	In \cite{CL01}, two of the present authors together with R.~Hala\v s introduced the concept of an orthomodular implication algebra. The main motivation for discussing this notion was generalizing to the case of orthomodular lattices the fact that in Boolean algebras the properties of the implication operation can be modeled by a so-called implication algebra. This structure itself can be considered as a join-semilattice with $1$, whose principal filters are Boolean algebras.
	As discussed in \cite{CL01}, orthomodular implication algebras can be axiomatized as follows:
	
	\begin{definition}\label{df:oia}
		An \emph{orthomodular implication algebra} is an algebra $\mathbf A=(A, \cdot, 1)$, of type $(2,0)$ such that the following conditions hold:
		\begin{enumerate}[{\rm(O1)}]
			\item $xx=1$,
			\item $x(yx)=1$,
			\item $(xy)x=x$,
			\item $(xy)y=(yx)x$,
			\item $(((xy)y)z)(xz) = 1$,
			\item $(((((((((xy)y)z)z)z)x)x)z)x)x = (((xy)y)z)z$.
		\end{enumerate}
	\end{definition}
	For the reader's convenience, let us recall that, setting $x\lor y = (xy)y$, for any orthomodular implication algebra $\mathbf A$, $(A,\lor,1)$ is a join-semilattice with top element $1$, whose induced order is specified by $x\leq y  \text{ if and only if } xy = 1$.
	\begin{definition}{\rm\cite[ Definition 5]{CL01}} An \emph{orthomodular join-semilattice} is an algebra of the form $\mathbf{A}=(A,\lor,1,({}^{p}:p\in A))$ where $(A, \lor, 1)$ is a join-semilattice with greatest element 1 and for each $p\in A$, ${}^p$ is a unary operation on $[p,1]$ such that $([p,1],\lor,
		\land_{p},{}^p, p, 1)$ is an orthomodular lattice where $\land_{p}$ denotes the meet-operation corresponding to the partial order induced by $\lor$.
	\end{definition}
	\begin{lemma}\label{lem: ortjoinsem<->ortimplal}{\rm\cite[Theorems 2 and 3]{CL01}} Let $\mathbf{A}=(A,\cdot,1)$ and $\mathbf{B}=(B,\lor,1,({}^p:p\in B))$ be an orthomodular implication algebra and an orthomodular join-semilattice, respectively. Setting, for any $x,y\in A$ and $z\in[x,1]$ \[x\lor y:=(xy)y \text{ and } z^x:=zx,\] and, for any $x,y\in B$, \[ x\cdot y:=(x\lor y)^y,\] the following hold:
		\begin{enumerate}[{\rm(i)}]
			\item  $\mathcal{S}(\mathbf{A})=(A,\lor,1,({}^p:p\in A))$ is an orthomodular join-semilattice.
			\item $\mathcal{A}(\mathbf{B})=(B,\cdot,1)$ is an orthomodular implication algebra.
		\end{enumerate}
	\end{lemma}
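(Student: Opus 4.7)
The result is explicitly attributed to Theorems 2 and 3 of \cite{CL01}, so strictly speaking the proof reduces to an invocation of those theorems; nevertheless, let me sketch how I would verify it from scratch. The overall plan is to handle the two directions separately, deriving in each case a small stock of auxiliary identities first and then using the interval-wise orthomodular law to close the argument.

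For part (i), I would begin by showing that the relation $x\leq y\Leftrightarrow xy=1$ is a partial order on $A$: reflexivity is immediate from (O1), transitivity is precisely (O5) together with $1y=y$ (which follows from (O2) with $y:=1$ and (O3)), and antisymmetry comes from (O4), since $xy=yx=1$ forces $x=(yx)x=(xy)y=y$ after applying $1z=z$. Next I would verify that $x\vee y:=(xy)y$ is the join in this order: $y\leq x\vee y$ is built into (O2), $x\leq x\vee y$ uses (O4) and (O2) symmetrically, and the least-upper-bound property uses (O5). Fixing $p\in A$, I would then restrict to the principal filter $[p,1]$ and verify that $z^{p}:=zp$ is an orthocomplementation by checking $(z^{p})^{p}=z$ for $z\geq p$ (an application of (O4) together with $pz=1$ and $1z=z$), $z\vee z^{p}=1$, and $z\wedge_{p}z^{p}=p$. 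The existence of binary meets in $[p,1]$ and the orthomodular law itself is where axiom (O6) is needed: this is the main technical obstacle, and I would follow the strategy of \cite{CL01} by unfolding (O6) to recognise it as the algebraic expression of $a=(a\wedge b')\vee b$ for $b\leq a$ inside the interval generated by the variables involved.

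For part (ii), assuming $(B,\vee,1,({}^{p}:p\in B))$ is an orthomodular join-semilattice and setting $x\cdot y:=(x\vee y)^{y}$, I would verify each of (O1)--(O6) in turn. Axioms (O1)--(O3) amount to routine orthomodular manipulations within the single interval $[y,1]$: (O1) is $1^{x}=x^{x}\vee \ldots$ reducing via $x^{x}=1$, (O2) follows since $y\leq yx\vee x$, and (O3) is the orthomodular identity $(x\vee y)\wedge_{y}((x\vee y)^{y})^{\perp}=x\vee y$ pulled back. Axiom (O4) is just commutativity of the join. Axiom (O5) lives inside the interval above $x\vee y\vee z$ and follows from a single application of orthomodularity there. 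The genuinely hard step is (O6), which after rewriting both sides through the definition $x\cdot y=(x\vee y)^{y}$ becomes an orthomodular identity in the interval $[(((xy)y)z)z,1]$; I expect it to collapse after repeated use of $(a^{b})^{b}=a$ for $a\geq b$ and of the orthomodular law, and this calculation is the main obstacle. Finally, one must check that the two constructions $\mathcal{S}$ and $\mathcal{A}$ are mutually inverse, which is a direct comparison of the defining formulas after noting that on $[y,1]$ the new product $(x\vee y)^{y}$ returns $xy$ precisely when $x\geq y$, so the general case reduces via (O4) to that situation.
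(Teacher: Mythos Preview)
The paper gives no proof of this lemma at all: it is stated purely as a citation of \cite[Theorems~2 and~3]{CL01}, so there is nothing to compare your argument against. Your opening sentence already identifies this correctly, and the remainder of your proposal is extra material going beyond what the paper does.

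That said, a few comments on your sketch. The outline for part~(i) is sound: the order axioms follow from (O1), (O4), (O5) together with $1y=y$ as you say, and (O6) is indeed the axiom that encodes orthomodularity in the intervals. Note, incidentally, that the paper essentially reproduces this half of the argument in the second paragraph of the proof of Theorem~\ref{lem:oia-sha}, where it shows that a skew Hilbert algebra satisfying the two extra identities yields an orthomodular join-semilattice; the steps there (join via $(x*y)*y$, antitone involution on $[p,1]$, orthomodular law from the long identity) match your plan almost verbatim. For part~(ii) your sketch is rougher: the phrases you wrote for (O1) and (O3) (``$1^{x}=x^{x}\vee\ldots$'' and ``$(x\vee y)\wedge_{y}((x\vee y)^{y})^{\perp}=x\vee y$ pulled back'') are garbled and would need to be replaced by actual computations before this could count as a proof. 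The identification of (O6) as the main obstacle is correct, but ``I expect it to collapse'' is not an argument; in \cite{CL01} this is where the real work lies. Finally, the last paragraph about $\mathcal{S}$ and $\mathcal{A}$ being mutually inverse is not part of the statement of the lemma as given here, so it is superfluous.
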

	Interestingly enough, any orthomodular implication algebra naturally gives rise to a skew Hilbert algebra, as the following lemma shows.
	
	\begin{theorem}\label{lem:oia-sha}
		Let $\mathbf A=(A, \cdot, 1)$ be an orthomodular implication algebra. Then, upon setting $x* y= (x\lor y)y$, and $x\leq y$ if and only if $xy=1$, then $\mathbf A=(A,\leq, *,1)$ is a skew Hilbert algebra, that satisfies 
		\begin{enumerate}[{\rm(i)}]
			\item $(x* y)* y=(y* x)* x$,
			\item $(((((((((x* y)* y)* z)* z)* z)* x)* x)* z)* x)* x = (((x* y)* y)* z)* z$.
		\end{enumerate}
		Conversely, if $\mathbf{S}=(S,\leq,*,1)$ is a skew Hilbert algebra satisfying {\rm(i)} and {\rm(ii)}, then setting $x\cdot y=(x\lor y)* y$, where $x\lor y:=(x* y)* y$, one has that $(S,\cdot,1)$ is an orthomodular implication algebra.
	\end{theorem}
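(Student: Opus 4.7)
The strategy is to factor the proof through Lemma~\ref{lem: ortjoinsem<->ortimplal}, which identifies orthomodular implication algebras with orthomodular join-semilattices $(A,\lor,1,({}^p:p\in A))$. Under this correspondence, the defining formula $x*y=(x\lor y)y$ becomes $x*y=(x\lor y)^y$, i.e.\ the orthocomplement of $x\lor y$ inside the orthomodular lattice $[y,1]$; dually, $(x*y)*y=x\lor y$, since ${}^y$ is an involution on $[y,1]$. Thus the binary operation $*$ may be thought of as ``orthocomplementation of the join with the second argument'', and $a^b$ (for $a\geq b$) coincides with $a*b$ on the principal filter $[b,1]$.

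For the forward direction, I would verify (S1)--(S4), (i) and (ii) entirely inside $\mathcal S(\mathbf A)$. Condition (S1) is immediate from $(x\lor y)^y=1$ iff $x\lor y=y$. For (S4), observe that $(x\lor y)\land_y(x*y)=y$ inside $[y,1]$; any $w\in A$ with $w\leq U(x,y)$ and $w\leq x*y$ then satisfies $w\leq x\lor y$ and $w\leq x*y$, so $w\lor y\in[y,1]$ lies below both and hence below their meet $y$, forcing $w\leq y$. Property (S3) follows from antitonicity of the orthocomplement, since $x\leq y$ gives $x\lor z\leq y\lor z$ and hence $(y\lor z)^z\leq(x\lor z)^z$; property (S2) reduces, under the hypothesis $y\leq x$, to $(x*y)*y=x$ via the involution in $[y,1]$. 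Condition (i) reads $(x*y)*y=x\lor y=y\lor x=(y*x)*x$, and (ii) is the literal translation of (O6) along $ab=a*b$.

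For the converse, by Lemma~\ref{lem: ortjoinsem<->ortimplal}(ii) it suffices to endow $S$ with the structure of an orthomodular join-semilattice; the orthomodular implication algebra $(S,\cdot,1)$ is then extracted by $x\cdot y=(x\lor y)*y$. Set $x\lor y:=(x*y)*y$ and, for $x\geq p$, $x^p:=x*p$. Commutativity of $\lor$ is exactly (i); its status as join follows from (S2)--(S4) together with Remark~\ref{rem2}, which identifies $y$ as the meet of $U(x,y)$ with $x*y$ and thereby gives both $x,y\leq x\lor y$ and the universal property. Involutivity of ${}^p$ on $[p,1]$ follows from $(x*p)*p=x\lor p=x$ when $x\geq p$, and (S4) supplies $x\land_p x^p=p$ and $x\lor x^p=1$ inside $[p,1]$, so each section is at least an ortholattice.

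The main obstacle is the orthomodular law on every section $[p,1]$; this is precisely what identity (ii) encodes. Unwinding the nine nested applications of $*$ via $a\lor b=(a*b)*b$ and $a^b=a*b$ (for $a\geq b$) transforms (ii) into the identity (O6) of Definition~\ref{df:oia}, which in the framework of \cite{CL01} expresses orthomodularity of the principal filters. Once $([p,1],\lor,\land_p,{}^p,p,1)$ has been shown to be orthomodular for every $p$, Lemma~\ref{lem: ortjoinsem<->ortimplal}(ii) produces the desired orthomodular implication algebra structure on $S$, and a final routine check confirms that the operation $\cdot$ so produced coincides with $(x\lor y)*y$, closing the proof.
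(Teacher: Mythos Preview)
Your strategy---factoring through Lemma~\ref{lem: ortjoinsem<->ortimplal} and reading $*$ as sectional orthocomplementation---is exactly the paper's, and your forward direction is fine (the observation $a*b=(a\lor b)b=ab$ is correct and actually makes (i), (ii) drop out of (O4), (O6) immediately). Two points in the converse need tightening.

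First, the universal property of $\lor$ does not come from Remark~\ref{rem2} and (S2)--(S4) alone: you also need (i). From Remark~\ref{rem2} and (2) you only get $b\leq(a*b)*b$; to obtain $a\leq(a*b)*b$ you must pass through $(a*b)*b=(b*a)*a$, i.e.\ (i). Likewise, for $a,b\leq c\Rightarrow(a*b)*b\leq c$ the paper uses (S3) twice and then (i) again: $a\leq c$ gives $(a*b)*b\leq(c*b)*b=(b*c)*c=1*c=c$. Remark~\ref{rem2} by itself does not furnish either inequality.

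Second, your appeal to (O6) for orthomodularity is circular as stated: in the converse you do not yet have an orthomodular implication algebra, so you cannot invoke the meaning of (O6) in \cite{CL01}. What the paper does---and what your phrase ``unwinding'' should really mean---is to specialise (ii) at $z=p$ with $p\leq x\leq y$ and simplify step by step using $u\lor v=(u*v)*v$ and $u^p=u*p$; the left side collapses to $(y\land_p x^{p})\lor x$ and the right side to $y$, which is precisely the orthomodular law in $[p,1]$. Once that computation is written out, the rest of your outline goes through unchanged.
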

	\begin{proof}
		(S1) If $a\leq b$, then $a* b=(a\lor b)b=bb=1$. Conversely, if $(a\lor b) b=a* b=1$, then, from condition \eqref{cond:ordr}, $a\lor b\leq b$, i.e.\ $a\leq b$. (S2) Suppose that $b* a=1$. Hence $b\leq a$. Consider $a*((a* b)* b)$. Then,
		\begin{align*}
			a*((a* b)* b) & =  (a\lor ((((a\lor b)b)\lor b )b))((((a\lor b)b)\lor b )b)=\\
			& = (a\lor (((ab)\lor b )b))(((a b)\lor b )b)=\\
			& = (a\lor ((ab)b))((a b)b)=\\
			& = (a\lor (a\lor b))(a\lor b)=\\
			& = (a \lor a) a=\\
			& = aa=1.
		\end{align*}
		(S3) Suppose that $a\leq b$. Now, $(b* c)* (a* c)=((b\lor c)c)* ((a\lor c)c)$. Since $a\leq b$, $a\lor c\leq b\lor c$. Then by \cite[Theorem (ix)]{CL01}, $(b\lor c)c\leq (a\lor c)c$. Consequently, $(b* c)* (a* c)=(((b\lor c)c)\lor ((a\lor c)c))((a\lor c)c)=((a\lor c)c)((a\lor c)c)=1$.\\
		(S4) $L(U(a,b)a* b)=L(U(a,b),(a\lor b )b)= L(U(a,b),(a\lor b)^b)$, in the interval $[b, 1]$, which is an orthomodular lattice (see the proof of \cite[Theorem 4]{CL01}). Therefore, $L(U(a,b),(a\lor b)^b)=L(a\lor b,(a\lor b)^b)=L(b)$, since $(a\lor b)\land (a\lor b)^b=b$.
		Conditions (i) and (ii) are immediate.\\
		Concerning the converse direction, we prove that, setting, for any $a,b\in S$, $a\lor b:= (a* b)* b$ and $a^{p}:=a* p$, for any $p\in S$ and $a\in [p,1]$, $(S,\lor,1,({}^{p}:p\in S))$ is an orthomodular join-semilattice. Therefore, by applying Lemma \ref{lem: ortjoinsem<->ortimplal}(ii), one has that, setting $x\cdot y=(x\lor y)* y$, $(S,\cdot, 1)$ is an orthomodular implication algebra.\\
		Let $a,b\in S$. Clearly, $a,b\leq (a* b)* b=(b* a)* a$, by (i). Now, suppose that $a,b\leq c$. Then, by applying (S3) twice, one has $(a* b)* b\leq (c* b)* b=(b* c)* c=1* c=c$. We conclude that $(S,\lor,1)$ is a join-semilattice with $1$ as its top element. Let $p\in S$. Clearly, the operation ${}^{p}$ on the interval $[p,1]$, is an antitone involution by (S3) and $(a* p)* p= a\lor p = a$, for any $a\in [p,1]$. Moreover, setting, for any $x,y\in [p,1]$, $x\land_{p}y:= (x^p \lor y^p )^p$, it is easily seen that $\land_{p}$ is the meet operation whose dual is $\lor$ on $[p,1]$. Since, for any $x\in [p,1]$ one has that $x\land x^{p}=x\land (x* p) =p$ (by (S4)), one has that $x\land_{p}(x* p)=p$ and so $x\lor x^p =(x^p \land_p x)^p =p^p = 1$. We conclude that $([p,1],\lor,\land_{p},{}^{p},p,1)$ is an ortholattice. Finally, assume that $p\leq x\leq y$. By (ii), one has:
		\begin{align*}
			y & = (x\lor y)\lor p=\\
			& = (((x* y)* y)* p)* p=\\
			& = (((((((((x* y)* y)* p)* p)* p)* x)* x)* p)* x)* x=\\
			& = (((((x\lor y)\lor p)* p)\lor x)* p)\lor x=\\
			& = (y^p \lor x)^p \lor x=\\
			& = (y\land_p x^p ) \lor x.
		\end{align*}
		Therefore, $([p,1],\lor,\land_{p},{}^{p},p,1)$ is an orthomodular lattice.
	\end{proof}
	We note that any orthomodular implication algebra induces a strong skew Hilbert algebra. However, this algebra, in general, may not be lattice-ordered, since the underlying poset could be a join-semilattice only.
	\begin{definition}\label{def:secortlat}A \emph{sectional orthomodular lattice} is a structure $\mathbf{A}=(A,\lor,\land,0,({}^{p}:p\in A))$ such that $(A,\lor,\land,0)$ is a lattice with a bottom  element $0$ and, for any $p\in A$, ${}^{p}:[0,p]\rightarrow[0,p]$ is an antitone involution on $([0,p],\leq)$ such that $([0,p],\lor,\land,{}^{p},0,p)$ is an orthomodular lattice.
	\end{definition}
	
	For the reader's convenience, let us recall the notion of generalized orthomodular lattice, which will play a relevant role in the development of the present section.
	\begin{definition}\label{def: gnrlzdrthmdlrlttc}{\rm\cite{Ja68}} A \emph{generalized orthomodular lattice} is a sectional orthomodular lattice $\mathbf{A}=(A,\lor,\land,0,({}^{p}:p\in A))$ satisfying, for any $x,y,p\in A$, the following additional condition: 
		\[x\leq y\leq p \text{ entails } x^y = x^p \land y\]
	\end{definition}
	From now on, we will denote by $\mathcal{GOML}$, the class of generalized orthomodular lattices.
	\begin{lemma}\label{lem:gom-eq} Let $\mathbf{A}=(A,\lor,\land,0,({}^{p}:p\in A))$ be a sectional orthomodular lattice. Then $\mathbf{A}$ is generalized orthomodular if and only if it satisfies:
		\[(x\land a)^{a}\approx (x\land a)^{a\lor b}\land a.\]
	\end{lemma}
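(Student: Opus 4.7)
The plan is to prove both directions of the equivalence by direct re-labelling of variables; no deep structural argument should be needed, since the identity and the quasi-identity differ only by a clever choice of parameters.

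For the forward direction I would assume $\mathbf A$ is generalized orthomodular and fix arbitrary $x,a,b\in A$. The key observation is that $x\land a\leq a\leq a\lor b$, so the triple $(x\land a,\,a,\,a\lor b)$ fits the hypothesis of Definition \ref{def: gnrlzdrthmdlrlttc} in the roles $(x,y,p)$. Applying the defining implication of generalized orthomodularity to this triple immediately yields
\[
(x\land a)^a = (x\land a)^{a\lor b}\land a,
\]
which is the desired identity.

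For the converse, I would assume the identity and fix $x\leq y\leq p$. The idea is to set $a:=y$ and $b:=p$ in the identity. Then $a\lor b=y\lor p=p$ because $y\leq p$, and $x\land a=x\land y=x$ because $x\leq y$. Substituting and simplifying, the identity collapses to
\[
x^y = x^p\land y,
\]
which is exactly the defining implication of a generalized orthomodular lattice.

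I do not expect a serious obstacle: both directions are little more than variable re-labellings, and the well-definedness of the involutions in play (membership of $x\land a$ in both $[0,a]$ and $[0,a\lor b]$ on one side, and of $x$ in $[0,y]$ and $[0,p]$ on the other) is immediate from the order relations used. The only point worth flagging is to parse the identity unambiguously, namely that $(x\land a)^{a\lor b}$ denotes the image of $x\land a$ under the involution ${}^{a\lor b}$ defined on $[0,a\lor b]$, which makes sense precisely because $x\land a\leq a\leq a\lor b$.
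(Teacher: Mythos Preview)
Your proposal is correct and follows essentially the same approach as the paper's proof. The paper handles the forward direction exactly as you do (noting $x\land a\leq a\leq a\lor b$ and applying Definition~\ref{def: gnrlzdrthmdlrlttc}), and dismisses the converse as ``trivial''; your explicit substitution $a:=y$, $b:=p$ is precisely the content of that triviality.
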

	\begin{proof}
		Note that, since $x\land a\leq a\leq a\lor b$, from the above condition one has $(x\land a)^a =(x\land a)^{a\lor b}\land a$. The converse direction is trivial.
	\end{proof}
	Given a lattice $\mathbf{A}$, let us denote by $\mathbf{A}^{\partial}=(A,\lor^{\partial},\land^{\partial})$ the dual of $\mathbf{A}$, i.e.\ the lattice obtained from $\mathbf{A}$ by setting, for any $x,y\in A$, $x\leq^{\mathbf{A}^\partial}y$ if $y\leq^{\mathbf{A}}x$. Clearly, if $\mathbf{A}$ is an orthomodular lattice, then its lattice dual $\mathbf{A}^\partial$ equipped with an antitone involution defined in the obvious way, is again an orthomodular lattice.
	\begin{remark}\label{rem: secorth->joinorthlat} Let $\mathbf{A}=(A,\lor,\land,0,({}^{p}:p\in A))$ be a sectional orthomodular lattice. It is easily seen that, once endowed with unary operations inherited by $\mathbf{A}$, $A^{\partial}$ is an orthomodular join-semilattice {\rm(}with $0$ as its greatest element{\rm)} which is also a lattice.
	\end{remark}
	Lemma \ref{lem: genorthlat->orthoinsemi} shows that it is possible to frame by means of two natural identities the theory of generalized orthomodular lattices within the class of orthomodular join-semilattices.
	\begin{lemma}\label{lem: genorthlat->orthoinsemi}Let $\mathbf{A}=(A,\lor,\land,0,({}^{p}:p\in A))$ be a generalized orthomodular lattice. Then $\mathbf{A}^\partial=(A,\lor^{\partial},\land^{\partial},0,({}^{p}:p\in A))$ is a {\rm(}lattice-ordered{\rm)} orthomodular join-semilattice satisfying, for any $x,y,z\in A$:
		\begin{equation}
			(x\lor y)^y=(x\lor y)^{y\land z}\lor y.\tag{B}\label{A}    
		\end{equation}
		
		Conversely, for any lattice-ordered orthomodular join-semilattice $\mathbf{A}=(A,\lor,\land,1,({}^p:p\in A))$ satisfying \eqref{A}, $\mathbf{A}^\partial=(A,\lor^{\partial},\land^{\partial},1,({}^p:p\in A))$ is a generalized orthomodular lattice. 
	\end{lemma}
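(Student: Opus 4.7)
The plan is to treat this as a routine dualisation argument, reducing everything to Lemma~\ref{lem:gom-eq} and Remark~\ref{rem: secorth->joinorthlat}.

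For the forward direction, I start with a generalized orthomodular lattice $\mathbf{A}$. Since every generalized orthomodular lattice is in particular a sectional orthomodular lattice, Remark~\ref{rem: secorth->joinorthlat} already gives that $\mathbf{A}^{\partial}$ is a (lattice-ordered) orthomodular join-semilattice with top element $0$. Only the identity \eqref{A} remains. The key observation is that, reading $\lor,\land$ in \eqref{A} as the operations $\lor^{\partial},\land^{\partial}$ of $\mathbf{A}^{\partial}$, the identity translates through the duality into
\[
(x\land y)^{y}=(x\land y)^{y\lor z}\land y
\]
in $\mathbf{A}$, where $\land,\lor$ now refer to the original lattice operations. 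Indeed $x\lor^{\partial}y$ is $x\land y$ in $\mathbf{A}$, $y\land^{\partial}z$ is $y\lor z$ in $\mathbf{A}$, the suffix $\lor^{\partial}y$ becomes $\land y$, and the unary operations ${}^{p}$ coincide set-theoretically since $[p,1^{\partial}]_{\mathbf{A}^{\partial}}=[0,p]_{\mathbf{A}}$. After this translation, the desired identity is exactly the one characterising generalised orthomodular lattices among sectional orthomodular ones in Lemma~\ref{lem:gom-eq} (with $a=y$, $b=z$, $x=x$), which holds by hypothesis on $\mathbf{A}$.

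For the converse, I reverse the same translation. Let $\mathbf{A}$ be a lattice-ordered orthomodular join-semilattice satisfying \eqref{A}. Passing to $\mathbf{A}^{\partial}$ turns the top element $1$ into a bottom element, and for each $p$ the interval $[p,1]_{\mathbf{A}}$ becomes the interval $[0,p]_{\mathbf{A}^{\partial}}$; the orthomodular lattice structure on $[p,1]$ with complement ${}^{p}$ dualises to an orthomodular lattice structure on $[0,p]_{\mathbf{A}^{\partial}}$ with the same complement ${}^{p}$, which is antitone and involutive on the dualised order. So $\mathbf{A}^{\partial}$ is a sectional orthomodular lattice in the sense of Definition~\ref{def:secortlat}. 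Finally, the dualisation carried out in the previous paragraph runs backwards: \eqref{A} in $\mathbf{A}$ becomes $(x\land a)^{a}=(x\land a)^{a\lor b}\land a$ in $\mathbf{A}^{\partial}$, so by Lemma~\ref{lem:gom-eq} applied to $\mathbf{A}^{\partial}$, the latter is generalised orthomodular.

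The main obstacle is purely bookkeeping: one must be careful that the unary operations ${}^{p}$ are genuinely the same functions in $\mathbf{A}$ and in $\mathbf{A}^{\partial}$ (only their domains $[0,p]$ and $[p,1^{\partial}]$ are identified), and that $\lor,\land$ inside \eqref{A} are consistently interpreted in the structure where the identity is being asserted. Once these conventions are fixed, no calculation beyond the duality dictionary and a single appeal to Lemma~\ref{lem:gom-eq} is required.
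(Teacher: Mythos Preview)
Your proposal is correct and follows essentially the same route as the paper's proof, which also reduces both directions to Lemma~\ref{lem:gom-eq} via the duality, invoking Definition~\ref{def: gnrlzdrthmdlrlttc} and (implicitly) Remark~\ref{rem: secorth->joinorthlat}. You are simply more explicit about the bookkeeping in the dualisation dictionary, whereas the paper compresses the whole argument into two sentences.
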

	\begin{proof} Clearly $(A, \lor ^\partial, 0)$ is a join-semilattice. Moreover, from Definition \ref{def: gnrlzdrthmdlrlttc} and Lemma \ref{lem:gom-eq} follows that $\mathbf{A}^\partial=(A,\lor^{\partial},\land^{\partial},0,({}^{p}:p\in A))$ is an orthomodular join-semilattice that satisfies equation \eqref{A}. The converse is immediate.
	\end{proof}
	
	Making use of Lemma \ref{lem: genorthlat->orthoinsemi} we obtain the following theorem.
	\begin{theorem} Let $\mathbf{A}=(A,\lor,\land,0,({}^p:p\in A))$ be a generalized orthomodular lattice. Then setting, for any $x,y\in A$, $xy = (x\lor^\partial y)^y$, $\mathbf{A}^\partial=(A,\cdot,0)$ is a lattice-ordered orthomodular implication algebra satisfying
		\[xy\approx ((x\lor y)(y\land z))\lor y.\tag{$\text{B}^*$}\label{A1}\]
		Conversely, any lattice-ordered orthomodular implication algebra satisfying \eqref{A1} induces a generalized orthomodular lattice.
	\end{theorem}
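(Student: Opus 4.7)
The plan is to chain together the preceding results. Lemma \ref{lem: ortjoinsem<->ortimplal} provides a term equivalence between orthomodular join-semilattices and orthomodular implication algebras via $xy := (x\lor y)^y$ and $x\lor y := (xy)y$, while Lemma \ref{lem: genorthlat->orthoinsemi} identifies generalized orthomodular lattices with those lattice-ordered orthomodular join-semilattices that satisfy equation (B). Thus one need only verify that (B) translates to (B*) under this term equivalence; no new structural fact about orthomodular lattices has to be established.

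For the forward direction, let $\mathbf{A}$ be a generalized orthomodular lattice. By Lemma \ref{lem: genorthlat->orthoinsemi}, $\mathbf{A}^\partial$ is a lattice-ordered orthomodular join-semilattice (with $0$ as its top element) satisfying (B). Apply Lemma \ref{lem: ortjoinsem<->ortimplal}(ii) with $xy := (x\lor^\partial y)^y$ to obtain an orthomodular implication algebra $(A,\cdot,0)$; it is lattice-ordered because its induced join $(xy)y = x\lor^\partial y$ coincides with the pre-existing lattice join on $\mathbf{A}^\partial$. To verify (B*), I unfold (B) definitionally: the left-hand side $(x\lor^\partial y)^y$ equals $xy$ by definition, and since $y\land^\partial z \leq^\partial y \leq^\partial x\lor^\partial y$, one has $(x\lor^\partial y)^{y\land^\partial z} = (x\lor^\partial y)(y\land^\partial z)$ by the very same defining clause. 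Hence (B) rewrites as $xy = ((x\lor^\partial y)(y\land^\partial z))\lor^\partial y$, which is (B*). For the converse, starting from a lattice-ordered orthomodular implication algebra satisfying (B*), Lemma \ref{lem: ortjoinsem<->ortimplal}(i) returns an orthomodular join-semilattice, which is a lattice by hypothesis; reversing the above unfolding turns (B*) back into (B). The converse half of Lemma \ref{lem: genorthlat->orthoinsemi} then yields a generalized orthomodular lattice upon passing to the order-dual.

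The principal obstacle is bookkeeping rather than substance: one must juggle two distinct dualities simultaneously, namely the order-dual taking $\mathbf{A}$ to $\mathbf{A}^\partial$ (which swaps top and bottom, so that the constant $1$ in implication-algebra notation corresponds to the $0$ of the generalized orthomodular lattice), and the definitional swap between $(x\lor y)^y$ and the implication term $xy$. Once these correspondences are nailed down, the translation of (B) into (B*) and back is purely syntactic, and the theorem follows by direct appeal to the two lemmas.
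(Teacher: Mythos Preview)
Your proposal is correct and follows essentially the same route as the paper: apply Lemma~\ref{lem: genorthlat->orthoinsemi} to pass between generalized orthomodular lattices and lattice-ordered orthomodular join-semilattices satisfying (B), then use Lemma~\ref{lem: ortjoinsem<->ortimplal} to move between the latter and orthomodular implication algebras. In fact you spell out more than the paper does, namely the definitional unfolding that turns (B) into (B*) via the observation $y\land^\partial z\leq^\partial x\lor^\partial y$, so that $(x\lor^\partial y)(y\land^\partial z)=(x\lor^\partial y)^{y\land^\partial z}$; the paper simply asserts that (B) corresponds to (B*) under the term equivalence.
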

	\begin{proof}
		By Lemma \ref{lem: genorthlat->orthoinsemi}, $\mathbf{A}^\partial=(A,\lor^{\partial},\land^{\partial},0,({}^{p}:p\in A))$ is a lattice-ordered orthomodular join-semilattice satisfying equation \eqref{A}. Hence, by Lemma \ref{lem: ortjoinsem<->ortimplal}, $\mathcal{A}(\mathbf{A}^{\partial})= (A,\cdot,0)$ is an orthomodular implication algebra satisfying \eqref{A1}. Conversely, if $\mathbf{A}=(A,\cdot,1)$ is a lattice-ordered orthomodular implication algebra satisfying \eqref{A1}, then, by Lemma \ref{lem: ortjoinsem<->ortimplal}, $\mathcal{S}(\mathbf{A})=(A,\lor,\land, 1,({}^p:p\in A))$ is a lattice-ordered orthomodular join-semilattice satisfying \eqref{A}, and so $\mathcal{S}(\mathbf{A})^\partial$ is a generalized orthomodular lattice, by Lemma \ref{lem: genorthlat->orthoinsemi}.
	\end{proof}
	The following corollary is a direct consequence of Theorem \ref{lem:oia-sha}.
	\begin{corollary}\label{cor: goml->lskewhilbalg} $\mathcal{GOML}$ is term equivalent to the variety of lattice skew Hilbert algebras satisfying the following identities:
		\begin{enumerate}[{\rm[i]}]
			\item $(x* y)* y\approx (y* x)* x$,
			\item $(((((((((x* y)* y)* z)* z)* z)* x)* x)* z)* x)* x \approx (((x* y)* y)* z)* z$,
			\item $x* y\approx ((x\lor y)(y\land z))\lor y$.
		\end{enumerate}
	\end{corollary}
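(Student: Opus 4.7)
The corollary follows by composing the two term equivalences already at hand. Theorem~\ref{lem:oia-sha} gives a term equivalence between orthomodular implication algebras and strong skew Hilbert algebras satisfying (i) and (ii), via the mutually inverse translations $x\cdot y:=(x\lor y)*y$ and $x*y:=(x\lor y)\cdot y$, where $x\lor y:=(x*y)*y$. The immediately preceding theorem establishes a term equivalence between $\mathcal{GOML}$ and lattice-ordered orthomodular implication algebras satisfying \eqref{A1}. My plan is to compose these two equivalences, check that the restriction to lattice-ordered structures behaves correctly on each side, and verify that the translation of \eqref{A1} into the skew Hilbert signature is exactly identity (iii).

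First I would observe that the correspondence of Theorem~\ref{lem:oia-sha} preserves the underlying poset (and hence the property of being a lattice): the join induced from $*$ coincides with the OIA-join $(xy)y$, and the order relations agree via $x\leq y\iff x*y=1\iff xy=1$. Thus a lattice-ordered OIA corresponds exactly to a lattice skew Hilbert algebra satisfying (i) and (ii), and conversely.

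Next, the substantive step is to translate \eqref{A1} from the OIA language to the skew Hilbert language. The OIA product $xy$ on the LHS translates to $(x\lor y)*y$; but in the induced skew Hilbert algebra one has $x*y=(x\lor y)\cdot y=((x\lor y)\lor y)*y=(x\lor y)*y$, so the LHS becomes $x*y$. On the RHS, the product $(x\lor y)(y\land z)$ translates to $((x\lor y)\lor(y\land z))*(y\land z)=(x\lor y)*(y\land z)$, using $y\land z\leq x\lor y$. Hence \eqref{A1} reads $x*y\approx ((x\lor y)*(y\land z))\lor y$, which is precisely identity (iii) of the corollary, once the juxtaposition on its RHS is read as the induced operation $a\cdot b:=(a\lor b)*b$.

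The main point where I expect to take care is the duality bookkeeping: the preceding theorem routes through $\mathbf{A}^\partial$, so the top $1$ of the skew Hilbert algebra corresponds to the bottom $0$ of the original GOML, with joins and meets interchanged. Once this dualization is tracked cleanly, composition of the two term equivalences — together with the syntactic identification of (iii) with the translation of \eqref{A1} — yields the desired term equivalence between $\mathcal{GOML}$ and lattice skew Hilbert algebras satisfying (i), (ii), (iii).
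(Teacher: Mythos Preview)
Your proposal is correct and follows precisely the route the paper intends: compose Theorem~\ref{lem:oia-sha} with the preceding theorem on lattice-ordered orthomodular implication algebras, check that the underlying poset (hence the lattice structure) is preserved, and translate \eqref{A1} into identity~(iii). The paper itself offers no further detail, simply declaring the corollary a direct consequence of Theorem~\ref{lem:oia-sha}; your write-up supplies exactly the missing bookkeeping.
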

	
	It is well known that orthomodular lattices are generalized orthomodular lattices with an upper bound $1$. Therefore, the above results characterize orthomodular lattices in the variety of lattice skew Hilbert algebras with bottom element $0$. 
	\begin{corollary}\label{lem: ortmodlrskewhilbalg}
$\mathcal{GOML}$ forms a variety.	The variety $\mathcal{OML}$ of orthomodular lattices is term equivalent to the variety of lattice skew Hilbert algebras with bottom element $0$ satisfying conditions {\rm(i)} -- {\rm(iii)} of Corollary \ref{cor: goml->lskewhilbalg}.
	\end{corollary}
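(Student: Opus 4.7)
The plan is to bootstrap both assertions from Corollary~\ref{cor: goml->lskewhilbalg} together with the fact, already established in the paper, that lattice skew Hilbert algebras form a variety $\mathcal{V}$ axiomatized by the lattice identities together with (L1)--(L4).

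For the first assertion, I would argue as follows. Corollary~\ref{cor: goml->lskewhilbalg} exhibits a term equivalence between $\mathcal{GOML}$ and the subclass $\mathcal{W}$ of $\mathcal{V}$ cut out by the three identities (i), (ii), (iii). Since $\mathcal{V}$ is a variety and (i)--(iii) are equations in the signature $(\lor,\land,*,1)$, the class $\mathcal{W}$ is an equational subclass, hence itself a variety. Because being a variety is preserved under term equivalence (one can translate the defining equations through the interpreting terms in either direction), $\mathcal{GOML}$ is also a variety. This re-derives the classical result of Janowitz purely from the skew Hilbert algebra framework, bypassing the apparent obstacle that Definition~\ref{def: gnrlzdrthmdlrlttc} uses a family $({}^p:p\in A)$ of partial operations parametrised by points.

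For the second assertion, the observation is that, as noted right before the corollary, an orthomodular lattice is precisely a generalized orthomodular lattice that has a greatest element in the $\mathcal{GOML}$-sense. Now inspect how the term equivalence of Corollary~\ref{cor: goml->lskewhilbalg} (which factors through the dualization $\mathbf{A}\mapsto\mathbf{A}^\partial$ in Lemma~\ref{lem: genorthlat->orthoinsemi}) transports bounds: the bottom $0$ of a $\mathcal{GOML}$ corresponds, after dualization, to the constant $1$ of the associated lattice skew Hilbert algebra, while a top element of the $\mathcal{GOML}$ corresponds to a bottom element $0$ of the lattice skew Hilbert algebra. Consequently, passing to the subclass of $\mathcal{W}$ obtained by enriching the signature with a constant $0$ and adding the equation $0\land x\approx 0$ (equivalently $0\leq x$) gives, under the same term equivalence, exactly the class of bounded $\mathcal{GOML}$'s, which is $\mathcal{OML}$.

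The main step requiring care is checking that the dualization in Lemma~\ref{lem: genorthlat->orthoinsemi} interacts correctly with bounds: one must verify that under $\mathbf{A}\mapsto\mathbf{A}^\partial$, adjoining a top to the $\mathcal{GOML}$ $\mathbf{A}$ corresponds exactly to adjoining a bottom to the skew Hilbert algebra attached to $\mathbf{A}^\partial$, and that the orthomodularity condition for the global interval $[0,1]$ is already entailed by the identities (i)--(iii) together with the boundedness. Both points are routine: the former is just the definition of $\leq^{\mathbf{A}^\partial}$, and the latter follows because, once a bottom $0$ is added, the interval $[0,1]$ in the skew Hilbert algebra is exactly the section at $0$ of the induced orthomodular join-semilattice produced by Lemma~\ref{lem: ortjoinsem<->ortimplal}. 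Once these are established, term equivalence of $\mathcal{OML}$ with the announced subvariety of lattice skew Hilbert algebras with $0$ follows immediately, completing the proof.
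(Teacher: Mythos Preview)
Your proposal is correct and follows exactly the route the paper intends: the corollary is stated without proof, being an immediate consequence of Corollary~\ref{cor: goml->lskewhilbalg} together with the remark just above that orthomodular lattices are precisely the generalized orthomodular lattices with a top element (which, under the dualization of Lemma~\ref{lem: genorthlat->orthoinsemi}, becomes a bottom $0$ on the skew Hilbert side). Your write-up simply makes explicit the two steps the paper leaves implicit---that term equivalence transports the property of being a variety, and that the dualization swaps top and bottom---both of which are routine.
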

	
	In what follows, given an orthomodular lattice $\mathbf{A}$, we will denote by $\mathcal{H}(\mathbf{A})$ its associated skew Hilbert algebra.\\
	Note that, in general, if $\mathbf{A}$ is an orthomodular lattice, then $\mathcal{H}(\mathbf{A})$ need not be a relatively pseudocomplemented lattice as the next example shows.
	\begin{example} Consider the orthomodular lattice $\mathbf{MO}_2$ depicted in Fig.\ 3 with $*$ defined as follows:
		\[
		\begin{array}{c|cccccc}
			*  & 0  & a & a' & b & b' & 1 \\
			\hline
			0  & 1  & 1 & 1  & 1 & 1  & 1 \\
			a  & a' & 1 & a' & b & b' & 1 \\
			a' & a  & a & 1  & b & b' & 1 \\
			b  & b' & a & a' & 1 & b' & 1 \\
			b' & b  & a & a' & b & 1  & 1 \\
			1  & 0  & a & a' & b & b' & 1
		\end{array}
		\]
		Then $({\rm MO}_2,\lor,\land,*,0,1)$ is a lattice skew Hilbert algebra. However, let us observe that it is not pseudocomplemented. Indeed, for $x=a$, $y=0$, and $z=b$, one has
		\[b\lor 0\not\leq a' = a* 0 = a* (a\land b) = a*((a\lor 0)\land(b\lor 0)),\]
		i.e.\ the first condition of Proposition \ref{prop1} fails.
		\vspace*{-2mm}
		
		\begin{center}
			\setlength{\unitlength}{7mm}
			\begin{picture}(8,6)
				\put(4,1){\circle*{.3}}
				\put(1,3){\circle*{.3}}
				\put(3,3){\circle*{.3}}
				\put(5,3){\circle*{.3}}
				\put(7,3){\circle*{.3}}
				\put(4,5){\circle*{.3}}
				\put(4,1){\line(-3,2)3}
				\put(4,1){\line(-1,2)1}
				\put(4,1){\line(1,2)1}
				\put(4,1){\line(3,2)3}
				\put(4,5){\line(-3,-2)3}
				\put(4,5){\line(-1,-2)1}
				\put(4,5){\line(1,-2)1}
				\put(4,5){\line(3,-2)3}
				\put(3.85,.25){$0$}
				\put(.3,2.85){$a$}
				\put(2.3,2.85){$b$}
				\put(5.4,2.85){$b'$}
				\put(7.4,2.85){$a'$}
				\put(3.3,5.4){$1=0'$}
				\put(3.2,-.75){{\rm Fig.\ 3}}
			\end{picture}
		\end{center}
		
		\vspace*{4mm}
	\end{example}
	The next theorem shows that, indeed, orthomodular lattices inducing sectionally pseudocomplemented lattices are Boolean.  
	\begin{theorem} Let $\mathbf{A}=(A,\lor,\land,'\,0,1)$ be an orthomodular lattice. Then $\mathcal{H}(\mathbf{A})$ is sectionally pseudocomplemented if and only if $\mathbf{A}$ is a Boolean algebra.
	\end{theorem}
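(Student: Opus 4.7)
My plan is to handle the two directions separately: the easy forward direction by direct computation, and the converse by exploiting the characterizing identity for sectionally pseudocomplemented lattices from Proposition~\ref{prop1}.

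For the forward direction, suppose $\mathbf{A}$ is Boolean. In any orthomodular lattice, since $y\leq x\lor y$ and comparable elements commute, the Foulis--Holland theorem applied to $\{x\lor y,(x\lor y)',y\}$ yields $(x\lor y)^y=(x\lor y)'\lor y$. In a Boolean algebra, distributivity collapses this expression to $x'\lor y$, i.e.\ the classical relative pseudocomplement. Thus $\mathcal{H}(\mathbf{A})$ is relatively pseudocomplemented, hence a fortiori sectionally pseudocomplemented.

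For the converse, assume $\mathcal{H}(\mathbf{A})$ is sectionally pseudocomplemented. By Proposition~\ref{prop1}, the inequality $z\lor y\leq x*((x\lor y)\land(z\lor y))$ must hold for all $x,y,z\in A$. I would specialize to $y=0$ to obtain $z\leq x*(x\land z)$, and then use the interval-orthocomplement formula $u^{v}=u'\lor v$ (valid for $v\leq u$, again by Foulis--Holland) to compute $x*(x\land z)=x^{x\land z}=x'\lor(x\land z)$. This reduces the hypothesis to the inequality
\[
z\leq x'\lor(x\land z)\qquad\text{for all }x,z\in A.
\]

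The final step is to show that this inequality forces $\mathbf{A}$ to be Boolean. Since $x\land z\leq z$ implies the reverse inequality $x'\lor(x\land z)\leq x'\lor z$, we in fact obtain the equation $x'\lor z=x'\lor(x\land z)$, which I would rewrite as $z\lor x'=(z\land x)\lor x'$. This is the standard orthomodular-lattice criterion for $z$ to commute with $x'$, hence with every $x$. Since every pair of elements of $\mathbf{A}$ commutes, $\mathbf{A}$ is distributive, and therefore Boolean. The main conceptual obstacle is recognizing that the apparently weak inequality $z\leq x'\lor(x\land z)$ already encodes full commutativity of $\mathbf{A}$; once this is isolated, the remaining steps are routine.
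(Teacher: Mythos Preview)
Your argument is correct, but it takes a noticeably different route from the paper's. For the converse, the paper does not invoke Proposition~\ref{prop1} at all: it observes via Remark~\ref{rempsecseudo} that in a sectionally pseudocomplemented skew Hilbert algebra the operation $x*y$ must itself be the sectional pseudocomplement, so in particular $x*0=x'$ is the greatest element $c$ with $x\land c=0$. This immediately gives $x\land y=0\iff y\leq x'$, i.e.\ $\mathbf A$ is uniquely complemented and hence Boolean. Your approach instead extracts the identity $z\lor y\leq x*((x\lor y)\land(z\lor y))$ from Proposition~\ref{prop1}, specializes to $y=0$, computes $x*(x\land z)=x'\lor(x\land z)$, and then uses the orthomodular commutativity criterion $z\lor x'=(z\land x)\lor x'$ to conclude that every pair commutes. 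This is perfectly valid but more machinery-heavy: you need the syntactic axiomatization of sectionally pseudocomplemented lattices and an OML commutativity lemma, whereas the paper's argument uses only the semantic definition of sectional pseudocomplement at the single point $y=0$. Incidentally, in the forward direction the Foulis--Holland step is unnecessary in this paper's setting, since $x*y$ is \emph{defined} to be $(x\lor y)'\lor y$; you only need that in a Boolean algebra this reduces to $x'\lor y$.
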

	\begin{proof}
		Clearly any Boolean algebra is pseudocomplemented by setting $x* y=x'\lor y$. Concerning the converse direction, by Remark \ref{rempsecseudo}, for any $a,b\in A$, $a* b$ is the sectional pseudocomplement of $a$ with respect to $b$. Therefore, $x* 0 = (x \lor 0)'\lor 0 = x'$ is the largest element $c\in A$ such that $x\land c = 0$. Hence, we have that the following condition is fulfilled:
		\[x\land y= 0\quad\text{if and only if}\quad y\leq x'.\]
		In other words, $\mathbf{A}$ is uniquely complemented, i.e.\ $\mathbf A$ is in fact a Boolean algebra. 
	\end{proof}
	
	As it has been pointed out above, orthomodular lattices induce prominent examples of (lattice) strong skew Hilbert algebras by setting $x* y:=(x\lor y)'\lor y$. However, it is conceivable to wonder whether any orthomodular lattice can be endowed with a $*$ operation satisfying certain different, preferable conditions. Indeed, given a bounded poset $\mathbf P=(P,\leq,{}',0,1)$ with a unary operation $'$, it seems reasonable to define 
	\begin{equation}\label{eq:pst}
		x*y:=\left\{
		\begin{array}{ll}
			1  & \text{if }x\leq y, \\
			x' & \text{if }y=0, \\
			y  & \text{otherwise,}
		\end{array}
		\right.
	\end{equation}
	for all $x,y\in P$, and then check whether $\mathbb S(\mathbf P)=(P,\leq,*,1)$ is a skew Hilbert algebra.  The following result provides a smooth characterization of bounded posets with a unary operation which lend themselves to accommodate the construction in condition \eqref{eq:pst}.

	\begin{theorem}\label{th8}
		Let $\mathbf P=(P,\leq,{}',0,1)$ be a bounded poset with a unary operation $'$. Then the following are equivalent:
		\begin{enumerate}[{\rm(i)}]
			\item $\mathbb S(\mathbf P)$ is a skew Hilbert algebra,
			\item $\mathbb S(\mathbf P)$ is a strong skew Hilbert algebra,
			\item for all $x\in P$ the following hold:
			\begin{enumerate}[{\rm(a)}]
				\item $x'=1$ if and only if $x=0$,
				\item $'$ is antitone,
				\item $x\leq x''$,
				\item $L(x,x')=\{0\}$.
			\end{enumerate}
		\end{enumerate}
	\end{theorem}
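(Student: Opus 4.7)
The plan is to establish the equivalence via the cycle (ii)$\Rightarrow$(i)$\Rightarrow$(iii)$\Rightarrow$(ii). The first implication is immediate, since (S2') subsumes (S2).

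For (i)$\Rightarrow$(iii), I would exploit the fact that the clause $x*y=x'$ in the definition of $*$ applies whenever $y=0$, so the skew Hilbert axioms specialize to informative statements about $'$. Condition (a) follows from (S1) read on $x*0=x'$: the equivalence $x*0=1\iff x\leq 0\iff x=0$ yields $x'=1\iff x=0$; in particular, the boundary case $0*0$ forces $0'=1$, so that the two defining clauses of $*$ agree at $x=y=0$. Condition (b) follows from (S3) with $z=0$: given $a\leq b$, one gets $(b*0)*(a*0)=b'*a'=1$, whence $b'\leq a'$ by (S1) (the degenerate cases $a=0$ or $b=0$ reducing at once to~(a)). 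Condition (c) follows from (S2) with $y=0$: since $0*x=1$ is automatic, $x*((x*0)*0)=1$ reads $x*x''=1$, i.e.\ $x\leq x''$ (with the subcase $x'=0$ handled via~(a) as $x\leq 1=x''$). Condition (d) follows from (S4) with $y=0$: since $x\in U(x,0)$ and any $z\leq x$ lies below every element of $U(x,0)$, one has $L(U(x,0),x*0)=L(x,x')$, which (S4) identifies with $L(0)=\{0\}$.

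For (iii)$\Rightarrow$(ii), I would verify (S1), (S2'), (S3), (S4) by case analysis on the three clauses defining $*$. For (S1), if $x*y=1$ with $x\not\leq y$, then either $y=0$ forces $x'=1$, hence $x=0\leq y$ by~(a), a contradiction; or $y\neq 0$ and the ``otherwise'' clause gives $y=1$, also impossible, as then $x\leq 1$ would place us in rule~1. For (S2'), the case $x\leq y$ is trivial; in the critical case $y=0$, $x\neq 0$ one has $(x*y)*y=x'*0=x''$, and (c) supplies $x\leq x''$ (the subcase $x'=0$ yielding $x\leq 1$ via~(a)); in the generic case $(x*y)*y=y*y=1$. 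For (S3), one has to show $x\leq y\Rightarrow y*z\leq x*z$: for $z=0$ this is antitonicity of $'$ from~(b); for $x\leq z$ the right-hand side is $1$; for $x\not\leq z,\,y\not\leq z$ both sides equal $z$; and $x\not\leq z$ with $y\leq z$ is vacuous. For (S4), the case $x\leq y$ reduces to $L(U(y))=L(y)$; the generic case $x\not\leq y$, $y\neq 0$ uses $U(x,y)\subseteq U(y)$ to give $L(U(x,y),y)=L(y)$; and the delicate case $y=0$, $x\neq 0$ yields $L(U(x,0),x')=L(x,x')$, which equals $L(0)=\{0\}$ by~(d).

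The main obstacle is the careful bookkeeping of the boundary subcases $x=0$, $y=0$, or $x'=0$, which arise repeatedly and each must be discharged separately via~(a); the longest step is the case analysis for (S4), but it becomes transparent once one spots the identification $L(U(x,0),x*0)=L(x,x')$, after which condition~(d) closes the argument directly.
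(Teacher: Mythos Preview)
Your proposal is correct and follows essentially the same approach as the paper: both match each skew Hilbert axiom (S1)--(S4) to one of the conditions (a)--(d) via the same case analysis on the three clauses of the piecewise definition of $*$. The only difference is organizational---the paper records four separate biconditionals (S1)$\Leftrightarrow$(a), (S2)$\Leftrightarrow$(c), (S3)$\Leftrightarrow$(b), (S4)$\Leftrightarrow$(d) and then notes that the computation for (S2) already yields (S2'), whereas you arrange the same material as the cycle (ii)$\Rightarrow$(i)$\Rightarrow$(iii)$\Rightarrow$(ii).
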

	
	\begin{proof}
		Let $\mathbb S(\mathbf P)=(P,\leq,*,1)$ and $a,b,c\in P$. By definition of $*$ we have that $\mathbb S(\mathbf P)$ satisfies the identity $1*x\approx x$. First we prove that (i) and (iii) are equivalent.
		\begin{enumerate}[(S1)]
			\item It is easy to see $\mathbb S(\mathbf P)$ satisfies (S1) if and only if $\mathbf P$ satisfies (a).
			\item Since we have
			\[
			(a*b)*b=\left\{
			\begin{array}{ll}
				1*b=b\geq a & \text{if }a\leq b, \\
				a''         & \text{if }b=0, \\
				b*b=1\geq a & \text{otherwise,}
			\end{array}
			\right.
			\]
			$\mathbb S(\mathbf P)$ satisfies (S2) if and only if $\mathbf P$ satisfies (c).
			\item Since in case $a\leq b$ we have
			\begin{align*}
				b*c\leq1=a*c & \quad\text{ if }a\leq c, \\
				b*c\leq a*c\text{ is equivalent to }b'\leq a' & \quad\text{ if }c=0, \\
				b*c=c=a*c & \quad\text{ otherwise,}
			\end{align*}
			because in the last case $b\not\leq c\neq0$, $\mathbb S(\mathbf P)$ satisfies (S3) if and only if $\mathbf P$ satisfies (b ).
			\item Since we have
			\[
			L(U(a,b),a*b)=\left\{
			\begin{array}{ll}
				L(b,1)=L(b)      & \text{if }a\leq b, \\
				L(a,a')          & \text{if }b=0, \\
				L(U(a,b),b)=L(b) & \text{otherwise,}
			\end{array}
			\right.
			\]
			$\mathbb S(\mathbf P)$ satisfies (S4) if and only if $\mathbf P$ satisfies (d).
		\end{enumerate}
		From the proof of the equivalence of (S2) and (b) we see that $\mathbb S(\mathbf P)$ already satisfies (S2') whenever $\mathbf P$ satisfies (b).
	\end{proof}

	As it has been pointed out above, orthomodular join-semilattices can be framed within the theory of (strong) skew Hilbert algebras. It is natural to ask whether any skew Hilbert algebra can be regarded as a poset having sectional antitone operations. In the sequel, we provide a positive answer by proving that any skew Hilbert algebra can be regarded as a poset whose sections can be endowed with a Browerian pseudocomplement. As a consequence, we conclude that the class of skew Hilbert algebras can be regarded as a proper generalization of lattices with sectional antitone involutions \cite{CHK01}. 
	\begin{definition}\label{def:bcpo} A poset with \emph{sectional Browerian pseudocomplements} is a structure $\mathbf{A}=(A,\leq,1,({}^p :p\in A))$ such that $(A,\leq,1)$ is a poset with top element $1$, and, for any $p\in A$, $([p,1],\leq,{}^p ,1)$ is a poset with a unary operation ${}^p$ such that, for any $x,y\in[p,1]$:
		\begin{enumerate}[{\rm(BP1)}]
			\item $x\leq y$ implies $y^p \leq x^p$;
			\item $x\leq x^{pp}$;
			\item $L(x, x^p)=L(p)$.
		\end{enumerate}
	\end{definition}
	In the sequel we will denote by $\mathcal{PSB}$, the class of posets with sectional Browerian pseudocomplements. The next remark shows that the above definition makes sense.
	\begin{remark}\label{rem:propbcpo}
		Note that any poset $\mathbf{A}=(A,\leq,1,({}^p :p\in A))$ with sectional Browerian pseudocomplements satisfies, for any $x\in A$: \[1^x\approx x\quad\text{and}\quad x^x \approx 1.\]
		Indeed, let $p\in A$. One has $L(1^p )= L(1,1^p )=L(p)$. Moreover, $1^p = p$ entails $1\leq 1^{pp} = p^p$. We conclude $p^p = 1$. Therefore, if $a\in [p,1]$, then $a\leq 1$ entails $p=1^p\leq a^p$, i.e.\ $[p,1]$ is closed under ${}^p$.
	\end{remark}
	
	\begin{proposition}\label{thm:skh-bcpo} Let $\mathbf{S}=(S,\leq,*,1)$ be a skew Hilbert algebra. Then setting, for any $p\in S$ and $x\in[p,1]$, $x^p:=x* p$, $(S,\leq,1,({}^p :p\in S))$ is a poset with sectional Browerian pseudocomplements.
	\end{proposition}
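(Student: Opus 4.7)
The proof plan is to verify the three conditions of Definition~\ref{def:bcpo} directly from the axioms (S1)--(S4) of skew Hilbert algebras, together with the derived identity $x \leq y*x$ (labelled (2) in the discussion following Definition~\ref{def3}). Before dealing with (BP1)--(BP3), I would first check that ${}^p$ is a well-defined operation on $[p,1]$: for $x \in [p,1]$, the trivial bound $x*p \leq 1$ together with identity (2), specialized by substituting $p$ for $x$ and $x$ for $y$, yields $p \leq x*p = x^p$, so indeed $x^p \in [p,1]$.

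For \textbf{(BP1)}, I would fix $p \leq x \leq y$, use (S1) to rewrite $x \leq y$ as $x*y = 1$, and then invoke (S3) with $z := p$ to obtain $(y*p)*(x*p) = 1$. By (S1) again this is exactly $y^p \leq x^p$. For \textbf{(BP2)}, fix $x \in [p,1]$. Then $p \leq x$, so (S1) gives $p*x = 1$, and axiom (S2) applied with the roles of $x,y$ played by $x,p$ yields $x*((x*p)*p) = 1$, i.e.\ $x \leq x^{pp}$ by (S1).

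For \textbf{(BP3)}, the strategy is to reduce to axiom (S4), which states $L(U(x,p), x*p) = L(p)$. I would then simplify the left-hand side using $p \leq x$: since every upper bound of $\{x,p\}$ is already an upper bound of $x$ (and conversely), one has $U(x,p) = U(x)$; and since $x \in U(x)$ while every element of $U(x)$ dominates $x$, a direct two-way containment argument gives $L(U(x)) = L(x)$. Combining these,
\[
L(U(x,p), x*p) \;=\; L(U(x)) \cap L(x*p) \;=\; L(x) \cap L(x^p) \;=\; L(x, x^p),
\]
and (BP3) follows from (S4).

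No step of the argument is substantial; the only mild obstacle is the set-theoretic bookkeeping in (BP3), namely the reductions $U(x,p) = U(x)$ and $L(U(x)) = L(x)$, both of which are immediate from the definitions of lower and upper cones. Everything else is a one-line application of the skew Hilbert axioms.
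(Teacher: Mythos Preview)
Your proof is correct and follows essentially the same approach as the paper, which simply notes that (BP1) holds by (S3), (BP2) by (S2), and (BP3) by (S4) using $x\geq p$. You have merely spelled out the details (and added an explicit check that ${}^p$ maps $[p,1]$ into itself, which the paper defers to Remark~\ref{rem:propbcpo}).
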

	\begin{proof}
		Let $\mathbf{S}=(S,\leq,*,1)$ be a skew Hilbert algebra. (BP1) holds by (S3). (BP2) is a consequence of (S2), while (BP3) follows from (S4), since $x\geq p$.
	\end{proof}
	
	Proposition~\ref{thm:skh-bcpo} shows that every skew Hilbert algebra naturally gives rise to a poset with sectional Browerian pseudocomplements. However, if conditions (BP1) and (BP2) are supposed to hold for all elements of a poset $\mathbf A$ with sectional Browerian pseudocomplements, then $\mathbf A$ can be turned into a skew Hilbert algebra. Moreover this correspondence is one-to-one.
	\begin{definition} A poset $\mathbf A$ with sectional Browerian pseudocomplements is said to be \emph{strong} if conditions {\rm(BP1)} and {\rm(BP2)} hold for any $x,y,p\in A$.
	\end{definition}
	We denote by $s\mathcal{PSB}$ the class of strong posets with sectional Browerian pseudocomplements.
	\begin{theorem} Let $\mathbf{S}=(S,\leq,*,1)$ and  $\mathbf{A}=(A,\leq,1,({}^p:p\in A))$ be a strong skew Hilbert algebra and a strong poset with sectional Browerian pseudocomplements, respectively.
		\begin{enumerate}[{\rm(i)}]
			\item Setting, for any $x,y\in S$, $x^y:=x* y$, one has that $$\mathbb{B}(\mathbf{S})=(S,\leq,1,({}^p:p\in A))$$ is a strong poset with sectional Browerian pseudocomplements.
			\item Setting, for any $x,y\in A$, $x* y:=x^{y}$, one has that $$\mathbb{H}(\mathbf{A})=(A,\leq,*,1)$$ is a strong skew Hilbert algebra.
			\item $\mathbb{H}(\mathbb{B}(\mathbf{S}))=\mathbf{S}$, and $\mathbb{B}(\mathbb{H}(\mathbf{A}))=\mathbf{A}$.
		\end{enumerate}
	\end{theorem}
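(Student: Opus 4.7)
The plan is to carry out the three parts in sequence, with essentially all the nontrivial work concentrated in part (ii). For part (i), I would reduce to Proposition~\ref{thm:skh-bcpo}, which already manufactures a poset with sectional Browerian pseudocomplements from any skew Hilbert algebra; the only additional claim in the \emph{strong} setting is that (BP1) and (BP2) hold for arbitrary $x,y,p\in S$, which is immediate since (BP1) is a direct transcription of (S3) (stated for all elements) and (BP2) is precisely the identity (S2'), which holds universally by hypothesis.

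Part (ii) is where the substance lies, and I would verify (S1)--(S4) in turn. For (S1), the forward direction applies (BP1) to $x\leq y$ combined with $y^y=1$ from Remark~\ref{rem:propbcpo}; the converse combines (BP2) with $1^y=y$ to deduce $x\leq(x^y)^y=y$. Given (S1), the axiom (S2') becomes a restatement of (BP2), and (S3) follows by combining (BP1) with both directions of (S1). The axiom (S4) is delicate. The inclusion $L(y)\subseteq L(U(x,y),x^y)$ is routine once one observes that $y\leq x^y$ for all $x,y$---apply (BP1) to $x\leq 1$ and invoke $1^y=y$. For the reverse inclusion, the key idea is to apply (BP3) not to $x$ (which need not lie in $[y,1]$) but to $x^y$, which always does. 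This yields $L(x^y,(x^y)^y)=L(y)$. Combining (BP2) (giving $x\leq(x^y)^y$) with the same observation as before ($y\leq(x^y)^y$) shows that $(x^y)^y\in U(x,y)$, whence
\[L(U(x,y),x^y)\subseteq L((x^y)^y,x^y)=L(y).\]

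Part (iii) is then immediate from the definitions: both composites $\mathbb{H}\circ\mathbb{B}$ and $\mathbb{B}\circ\mathbb{H}$ merely rename a binary symbol into itself, leaving the underlying carrier, order, and operation tables unchanged.

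The principal obstacle I anticipate is the $\subseteq$ direction of (S4) in part (ii): (BP3) is available only inside sections $[p,1]$, while (S4) must accommodate arbitrary $x,y\in A$. The insight that breaks the impasse is that $x^y$ itself always lies in $[y,1]$ (by strongness, via (BP1) applied to $x\leq 1$), so (BP3) can be applied to $x^y$ rather than to $x$, and the resulting element $(x^y)^y$ turns out to be a common upper bound of $x$ and $y$, hence an element of $U(x,y)$ that witnesses the required inclusion.
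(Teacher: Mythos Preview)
Your proposal is correct and follows essentially the same route as the paper. In particular, your treatment of the delicate inclusion in (S4)---applying (BP3) not to $x$ but to $x^y\in[y,1]$ and then observing that $(x^y)^y\in U(x,y)$---is exactly the paper's argument; the paper writes $L(U(a,b),a^b)\subseteq L(a^{bb},a^b)=L(b)$ via the same two observations, and handles (i), (S1), (S2'), (S3), and (iii) just as you describe.
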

	\begin{proof} (i) By Proposition~\ref{thm:skh-bcpo}, $\mathbb{B}(\mathbf{S})$ is a poset with sectional Browerian pseudocomplements. Moreover, it is easily seen that, by (S3) and (S2'), (BP1) and (BP2) hold for any $p\in A$ and any $x,y\in[p,1]$.\\
		(ii) We prove that $\mathbb{H}(\mathbf A)$ satisfies (S1), (S3), (S4) and (S2'). Concerning (S1), assume that $a,b\in A$ are such that $a\leq b$. One has that $1=b^b\leq a^b$, by Remark \ref{rem:propbcpo}, and so $a* b=1$. Conversely, if $a^b=1$, then $a\leq a^{bb}=1^b =b$, again by Remark \ref{rem:propbcpo}. (S2') and (S3) follow directy from properties of strong posets with sectional Browerian pseudocomplements. Concerning (S4), note that $a^b\leq 1$ entails $b=1^b\leq a^{bb}$. Therefore, $a,b\leq a^{bb}$ implies $a^{bb}\in U(a,b)$, $L(U(a,b))\subseteq L(a^{bb})$, and $L(U(a,b),a^b)=LU(a,b)\cap L(a^b)\subseteq L(a^{bb})\cap L(a^b)=L(a^{bb},a^b)=L(b)$, by (BP3).\\
		(iii) Straightforward.
	\end{proof}
	\begin{corollary} The class of strong skew Hilbert algebras and $s\mathcal{PSB}$ are term equivalent.
	\end{corollary}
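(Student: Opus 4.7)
The plan is to read this as an immediate corollary of the preceding theorem; essentially no new calculation is required. Term equivalence between two classes of (possibly heterogeneously signed) algebras amounts to exhibiting mutually inverse term-translations between their operations that preserve the underlying carrier and respect the defining axioms of each class. The preceding theorem has already supplied exactly this data: the translations $x^p := x*p$ (for $p \leq x$) and $x*y := x^y$, together with the closure properties (i) and (ii) and the round-trip identities (iii).

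My plan would be, in order: first, record the two translations above as the candidate term maps connecting the signatures of strong skew Hilbert algebras and of $s\mathcal{PSB}$. Second, invoke parts (i) and (ii) of the preceding theorem to note that $\mathbb{B}$ maps any strong skew Hilbert algebra into $s\mathcal{PSB}$ and $\mathbb{H}$ maps any object of $s\mathcal{PSB}$ into the class of strong skew Hilbert algebras, so the translations land in the correct classes. Third, invoke part (iii) of the theorem, namely the equalities $\mathbb{H}(\mathbb{B}(\mathbf{S}))=\mathbf{S}$ and $\mathbb{B}(\mathbb{H}(\mathbf{A}))=\mathbf{A}$, which express precisely that composing the two term-translations in either order recovers the original operation(s) pointwise. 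This is exactly what is required for term equivalence.

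One presentational subtlety worth flagging, and arguably the only obstacle, is that the signature of $s\mathcal{PSB}$ contains an $A$-indexed family of unary operations $({}^p : p\in A)$ rather than a single operation symbol of fixed arity. Term equivalence here should therefore be understood in the natural parametric sense: each ${}^p$ is the term $x \mapsto x * p$ in the binary $*$ with parameter $p$, and conversely $*$ is the term $(x,y) \mapsto x^y$ (with the second argument supplying the parameter index). Once this convention is fixed, the three steps above yield the corollary directly, with no further verification needed beyond what the preceding theorem has already established.
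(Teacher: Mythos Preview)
Your proposal is correct and matches the paper's approach: the paper states this corollary without proof, treating it as an immediate consequence of the preceding theorem via the mutually inverse translations $\mathbb{B}$ and $\mathbb{H}$, exactly as you describe. Your discussion of the parametric-signature subtlety is a reasonable elaboration of what the paper leaves implicit.
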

	\begin{corollary} The class of lattice skew Hilbert algebras and the class of lattice-ordered $s\mathcal{PSB}$'s are term equivalent.
	\end{corollary}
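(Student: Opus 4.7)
The plan is to derive this corollary directly from the preceding one, which established the term equivalence between strong skew Hilbert algebras and $s\mathcal{PSB}$'s via the mutually inverse assignments $\mathbb{B}$ and $\mathbb{H}$. The first observation I would record is that every lattice skew Hilbert algebra is automatically a strong skew Hilbert algebra (as noted immediately after Definition~\ref{def3}), so a lattice skew Hilbert algebra is nothing more than a strong skew Hilbert algebra whose underlying poset happens to be a lattice; symmetrically for lattice-ordered $s\mathcal{PSB}$'s.

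Next, I would verify the crucial invariance: neither $\mathbb{B}$ nor $\mathbb{H}$ alters the underlying carrier or the partial order. For $\mathbb{B}(\mathbf{S})$, the order is literally the order of $\mathbf{S}$ (the definition $x^y := x*y$ merely re-labels a restriction of $*$ as a family of unary operations), and the analogous remark holds for $\mathbb{H}(\mathbf{A})$. Consequently, being lattice-ordered is preserved in both directions, and the round-trip identities $\mathbb{H}\circ\mathbb{B} = \mathrm{id}$ and $\mathbb{B}\circ\mathbb{H} = \mathrm{id}$ of part (iii) of the previous theorem restrict verbatim to the lattice-ordered subclasses.

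Since the lattice operations $\lor$ and $\land$ occur as primitives in both signatures, no further translation terms are needed for them; the interdefinability of the remaining operations ($*$ on one side, the family $({}^p : p\in A)$ on the other) is already supplied by the previous theorem. The only mild obstacle I anticipate is bookkeeping of signatures, specifically matching the indexed family $({}^p : p\in A)$ of partial unary operations against the total binary operation $*$; this is fully handled by the very definitions of $\mathbb{B}$ and $\mathbb{H}$ together with the convention in Definition~\ref{def:bcpo}, so no new argument is required. The corollary therefore follows at once from the previous one.
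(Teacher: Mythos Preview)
Your argument is correct and is exactly the (implicit) route the paper takes: the corollary is stated without proof immediately after the strong/$s\mathcal{PSB}$ term equivalence, and your observation that $\mathbb{B}$ and $\mathbb{H}$ leave the underlying order untouched is precisely why the lattice-ordered subclasses correspond. Nothing further is needed.
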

	Upon recalling that a lattice with sectional antitone involutions is a structure $\mathbf{A}=(A,\lor,\land,({}^p:p\in A),0,1)$ such that $(A,\lor,\land,0,1)$ is a bounded lattice and, for any $p\in A$, $([p,1],\lor,\land,{}^p,p,1)$ is a lattice with antitone involution, the following corollary easily follows.
	\begin{corollary} The class of lattices with sectional antitone involutions is term equivalent to the variety of lattice skew Hilbert algebras with bottom element $0$ satisfying \[(x* y)* y\approx (y* x)* x.\]
	\end{corollary}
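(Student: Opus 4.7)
The plan is to leverage the immediately preceding corollary, which gives a term equivalence between lattice skew Hilbert algebras and lattice-ordered strong posets with sectional Browerian pseudocomplements via $x^p := x*p$ on $[p,1]$. Under this bridge the presence of a bottom element $0$ transfers transparently, so the task reduces to showing that the extra identity $(x*y)*y \approx (y*x)*x$ corresponds exactly to each sectional pseudocomplement ${}^p$ being an antitone involution (which is automatically also a complementation) on $[p,1]$, i.e.\ to the section-wise structure required of a lattice with sectional antitone involutions.

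For the direction from a lattice skew Hilbert algebra with $0$ satisfying the identity to a lattice with sectional antitone involutions, I would fix $p \in A$ and $x \in [p,1]$. Then $p*x = 1$ by (S1), hence $(p*x)*x = 1*x = x$; the hypothesized identity $(x*p)*p = (p*x)*x$ then forces $x^{pp} = x$, upgrading (BP2) from $x \leq x^{pp}$ to equality, so ${}^p$ is an antitone involution on $[p,1]$. The complementation property of ${}^p$ then comes for free: (BP3) gives $x \wedge x^p = p$, and applying the antitone involution ${}^p$ flips the meet into a join, yielding $x \vee x^p = p^p = 1$. Hence each $([p,1],\vee,\wedge,{}^p,p,1)$ is a bounded lattice with antitone involution.

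For the converse direction, I would start from a lattice with sectional antitone involutions $(A,\vee,\wedge,({}^p : p \in A),0,1)$. Each ${}^p$ is antitone, involutive and a complementation on $[p,1]$, which supplies (BP1), the equality version of (BP2), and (BP3). Invoking the preceding corollary then produces a lattice skew Hilbert algebra with bottom $0$ whose operation satisfies $x*y = (x \vee y)^y$. A short computation using $(x \vee y)^y \in [y,1]$ and the involutivity of ${}^y$ gives $(x*y)*y = (x \vee y)^{yy} = x \vee y$, and symmetrically $(y*x)*x = x \vee y$, so the identity is satisfied.

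The only somewhat delicate point is managing the interface between the single global operation $*$ on the skew Hilbert side and the family of per-section operations ${}^p$ on the involution side, and pinning down that the chosen identity is equivalent to involutivity of every ${}^p$ rather than to a strictly stronger or weaker condition. The calculations in both directions are short once the translation is fixed, and it is pleasing that orthocomplementation of each section is forced from antitonicity, involutivity and (BP3), rather than needing to be postulated separately.
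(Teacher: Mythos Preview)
The paper gives no proof, indicating only that the corollary ``easily follows'' from the preceding term equivalence; your route through that corollary is exactly the intended one and the computations you sketch are correct.

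One caveat deserves mention. In the converse direction you assume each ${}^p$ is a complementation on $[p,1]$, which is what delivers (BP3). The paper's stated definition of a lattice with sectional antitone involutions literally only requires ``antitone involution'', not complementation. Your reading is nevertheless the right one in context---the cited \cite{CHK01} on basic algebras does build in complementation, and without it the corollary would simply be false: on the three-element chain $\{0,a,1\}$ with $a^0=a$ one computes $(a\lor 0)\land(a*0)=a\land a=a\neq 0$, so (L4) fails---but you are reading slightly more into the definition than the paper writes down. A second, smaller point: the preceding corollary treats ${}^p$ as a total operation on $A$, whereas a lattice with sectional antitone involutions only furnishes ${}^p$ on $[p,1]$. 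Your formula $x*y=(x\lor y)^y$ is exactly the needed extension, but strictly speaking one should check that the extended family still satisfies the strong (global) forms of (BP1) and (BP2); this is routine using $x\lor p\in[p,1]$ together with involutivity on each section.
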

	
	\section{Special subsets of skew Hilbert algebras}\label{sec: specele}
	In this section, we will describe basic properties of some special subsets of skew Hilbert algebras. 
	In particular, we will highlight the connections between the set of ``closed'' elements of a bounded skew Hilbert algebra and orthoposets. Then, we will investigate the relationships between the set of dense and weakly dense elements in a skew Hilbert algebra. 
	
	It can be noticed that, in the skew Hilbert algebras from Example~\ref{ex1} and Example~\ref{rem1}, the elements of the form $x*0$ form a Boolean algebra. In what follows, we show that, in general, this is not the case.
	
	For any skew Hilbert algebra $\mathbf S=(S,\leq,*,1)$ with bottom element $0$ put $x':=x*0$ for all $x\in S$ and $S':=\{x'\mid x\in S\}$ and let $\mathbb O(\mathbf S)$ denote the bounded poset $(S',\leq,{}',0,1)$. (Observe that $0=1'\in S'$ and $1=0'\in S'$.)
	
	The next theorem describes the connections between skew Hilbert algebras and orthoposets. Recall the definition of $\mathbb{S}$ from Theorem \ref{th8}.
	
	\begin{theorem}\label{th9}
		\
		\begin{enumerate}[{\rm(i)}]
			\item Let $\mathbf S=(S,\leq,*,1)$ be a skew Hilbert algebra with bottom element $0$. Then
			\begin{enumerate}[{\rm(a)}]
				\item $\mathbb O(\mathbf S)$ is an orthoposet,
				\item $\mathbb S(\mathbb O(\mathbf S))=\mathbf S$ if and only if $S'=S$ and $x*y=y$ for all $x,y\in S$ with $x\not\leq y\neq0$.
			\end{enumerate}
			\item Let $\mathbf P=(P,\leq,{}',0,1)$ be an orthoposet and $\mathbb S(\mathbf P)=(P,\leq,*,1)$. Then
			\begin{enumerate}
				\item[{\rm(c)}] $\mathbb S(\mathbf P)$ is a skew Hilbert algebra with bottom element $0$ satisfying $P'=P$ and $x*y=y$ for all $x,y\in P$ with $x\not\leq y\neq0$ {\rm(}wherefrom we conclude that it is strong{\rm)},
				\item[{\rm(d)}] $\mathbb O(\mathbb S(\mathbf P))=\mathbf P$.
			\end{enumerate}
		\end{enumerate}
	\end{theorem}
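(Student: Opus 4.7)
The plan is to exploit conditions (S1)--(S4) to first pin down the behaviour of the unary map $x \mapsto x' := x*0$ on all of $S$, and then derive the orthoposet structure on $S'$ from these preliminary facts, after which (b), (c) and (d) should reduce to direct verifications (using Theorem~\ref{th8} for the hard direction of (c)).

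First I would establish three preliminary properties of $'$ on the whole of $S$: antitonicity, the inequality $x \le x''$, and the identity $x''' = x'$. Antitonicity is immediate from (S3): taking $z=0$ there gives that $x \le y$ implies $y' \le x'$. The inequality $x \le x''$ follows by applying (S2) with $y=0$, using $0*x=1$ (which itself follows from $0\le x$ and (S1)). Applying antitonicity to $x \le x''$ gives $x''' \le x'$, and applying $x \le x''$ to $x'$ in place of $x$ gives $x' \le x'''$, so the two are equal. In particular, $'$ restricted to $S'$ is an involution and $S'$ is closed under $'$, so $\mathbb{O}(\mathbf{S}) = (S', \le, {}', 0, 1)$ is a bounded poset with an antitone involution. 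To upgrade to an orthoposet I need the complementation axioms for $x \in S'$. For $L_{S'}(x,x') = \{0\}$: applying (S4) with $y=0$ and using that $L(U(x,0)) = L(x)$ (since $0$ is the bottom) yields $L(x,x') = \{0\}$ already in $S$, and this restricts to $S'$. For $U_{S'}(x,x') = \{1\}$: if $z \in S'$ satisfies $z \ge x$ and $z \ge x'$, antitonicity gives $z' \le x$, so $z' \le z$; but then $z' \in L(z,z') = \{0\}$, so $z'=0$ and hence $z = 0'=1$ (noting $0' = 0*0 = 1$ by (S1)). This settles (a).

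For (b), observe that $\mathbb{S}(\mathbb{O}(\mathbf{S}))$ by construction has carrier $S'$ and its operation $\tilde *$ agrees with the original $*$ precisely in the cases $x \le y$ (both equal $1$) and $y = 0,\ x \not\le 0$ (both equal $x'$), while on the remaining case $x \not\le y \ne 0$ one has $\tilde*(x,y) = y$ by definition; so the identification holds if and only if the carriers agree and the original $*$ also takes value $y$ on that last case. For (c), Theorem~\ref{th8} does all the work: conditions (a)--(d) of that theorem are exactly the defining properties of an orthoposet (in particular $x \le x''$ follows from $x'' = x$), so $\mathbb{S}(\mathbf{P})$ is a strong skew Hilbert algebra; that $P' = P$ holds because $'$ is surjective on $P$ (being an involution), and the condition $x*y=y$ for $x\not\le y \ne 0$ is built into the definition of $\mathbb{S}$. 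For (d), one computes $x*_{\mathbb{S}(\mathbf{P})} 0$: if $x = 0$ this is $1 = 0'$, and otherwise it is $x'$; so in both cases the new "prime" coincides with the original one, the carrier of $\mathbb{O}(\mathbb{S}(\mathbf{P}))$ is $\{x' : x \in P\} = P$, and the partial order is inherited unchanged.

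The main subtle point, and the one I would double-check most carefully, is the verification of $U_{S'}(x,x') = \{1\}$, since the orthoposet structure lives on $S'$ rather than $S$ and nothing in the skew Hilbert axioms directly asserts that $x \lor x' = 1$. The argument above circumvents this by combining antitonicity, the involution $x'' = x$ on $S'$, and the already-established meet condition $L(x,x') = \{0\}$ to force $z' = 0$; this is where the closure properties of $'$ on $S'$ are essential, and where failure of $S' = S$ is harmless because every candidate upper bound $z$ is assumed to lie in $S'$ to begin with.
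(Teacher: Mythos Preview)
Your argument is correct and follows essentially the same route as the paper: you establish antitonicity via (S3), $x\le x''$ via (S2), deduce $x'''=x'$ and hence that $'$ is an involution on $S'$, obtain $L(x,x')=\{0\}$ from (S4), and then handle (b)--(d) by direct inspection together with Theorem~\ref{th8}. The only cosmetic difference is in the upper-cone condition $U_{S'}(x,x')=\{1\}$: the paper dispatches this in one line by invoking De~Morgan's laws for the antitone involution on $S'$ (namely $U_{S'}(a,a')=(L_{S'}(a',a))'=\{0\}'=\{1\}$), whereas you unfold that argument by hand, using $z\ge x'$, $x''=x$ (since $x\in S'$), and $L(z,z')=\{0\}$ to force $z'=0$ and hence $z=z''=1$; these are the same reasoning expressed at different levels of abstraction.
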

	
	\begin{proof}
		\
		\begin{enumerate}[(i)]
			\item Let $\mathbb S(\mathbb O(\mathbf S))=(S*0,\leq,\circ,1)$ and $a,b\in S$.
			\begin{enumerate}[(a)]
				\item Since $0,1\in S'$, $(S',\leq,0,1)$ is a bounded poset. Moreover, $'$ is a unary operation on $S'$. Because of (S3), $'$ is antitone. By (S2) we have $a\leq a''$. From this we conclude $a'\leq a'''$ and by (S3) also $a'''\leq a'$. Together we have $a'''=a'$ showing that $'$ is an involution on $S*0$. Finally, because of (S4), we conclude $L(a,a')=L(U(a,0),a')=L(0)=\{0\}$ and due to De Morgan's laws $U(a,a')=(L(a',a))'=0'=1$ showing that $'$ is a complementation on $(S',\leq,0,1)$.
				\item This follows from
				\[
				a\circ b=\left\{
				\begin{array}{ll}
					1=a*b & \text{if }a\leq b, \\
					a'=a*b & \text{if }b=0.
				\end{array}
				\right.
				\]
			\end{enumerate}
			\item
			\begin{enumerate}
				\item[(c)] This follows from Theorem~\ref{th8}.
				\item[(d)] Let $\mathbb O(\mathbb S(\mathbf P))=(P',\leq,{}^+,0,1)$ and $a\in P$. According to Theorem~\ref{th8}, $\mathbb S(\mathbf P)$ is a (strong) skew Hilbert algebra. Moreover, $P'=P$ and $a^+=a'$.
			\end{enumerate}
		\end{enumerate}
	\end{proof}
	\begin{remark}
		Theorem~\ref{th9} shows that the mappings $\mathbb O$ and $\mathbb S$ establish a one-to-one correspondence between the skew Hilbert algebras $(S,\leq,*,1)$ satisfying $S'=S$ and $x*y=y$ for all $x,y\in S$ with $x\not\leq y\neq0$ {\rm(}which are automatically strong{\rm)} on the one hand and orthoposets on the other.
	\end{remark} 
	
	
	
	
	
	
	\begin{example}\label{ex: benzene}
		If ${\rm O}_6:=\{0,a,b,a',b',1\}$ and $\mathbf{O_6}=({\rm O}_6,\lor,\land,{}',0,1)$ denotes the {\rm(}non-modular{\rm)} ortholattice visualized in Fig.~4
		
		\vspace*{-2mm}
		
		\begin{center}
			\setlength{\unitlength}{7mm}
			\begin{picture}(4,8)
				\put(2,1){\circle*{.3}}
				\put(1,3){\circle*{.3}}
				\put(3,3){\circle*{.3}}
				\put(1,5){\circle*{.3}}
				\put(3,5){\circle*{.3}}
				\put(2,7){\circle*{.3}}
				\put(2,1){\line(-1,2)1}
				\put(2,1){\line(1,2)1}
				\put(2,7){\line(-1,-2)1}
				\put(2,7){\line(1,-2)1}
				\put(1,3){\line(0,1)2}
				\put(3,3){\line(0,1)2}
				\put(1.85,.25){$0$}
				\put(.3,2.85){$a$}
				\put(3.4,2.85){$b$}
				\put(.3,4.85){$b'$}
				\put(3.4,4.85){$a'$}
				\put(1.3,7.4){$1=0'$}
				\put(1.2,-.75){{\rm Fig.~4}}
			\end{picture}
		\end{center}
		
		\vspace*{4mm}
		
		and $*$ the binary operation on ${\rm O}_6$ defined by
		\[
		\begin{array}{c|cccccc}
			*  & 0  & a & a' & b & b' & 1 \\
			\hline
			0  & 1  & 1 & 1  & 1 & 1  & 1 \\
			a  & a' & 1 & a' & b & 1  & 1 \\
			a' & b' & a & 1  & b & b' & 1 \\
			b  & b' & a & 1  & 1 & b' & 1 \\
			b' & a' & a & a' & b & 1  & 1 \\
			1  & 0  & a & a' & b & b' & 1
		\end{array}
		\]
		then $\mathbb S(\mathbf{O_6})=({\rm O}_6,\lor,\land,*,1)$ is a lattice skew Hilbert algebra and, by Theorem~\ref{th9}, $\mathbb O(\mathbb S(\mathbf{O_6}))=\mathbf{O_6}$.
	\end{example}
	
Recall that a Boolean poset, in the sense of Tkadlec \cite{Tk93}, is an orthoposet ${\mathbf P} = (P,\leq,',0,1)$ such that, for any $x,y\in P$:
\[x\land y= 0\quad\text{if and only if}\quad x\leq y'.\] It can be shown that an orthoposet $\mathbf{P}$ is Boolean if and only if the following LU-identity holds (see \cite{CFL}):
\[U(L(x, y), z) \approx U(L(U(x, z), U(y, z))).\]
%
%
	\begin{proposition}\label{osispara}
		Let $\mathbf S=(S,\leq,*,1)$ be a sectionally pseudocomplemented skew Hilbert algebra with bottom element $0$. Then the orthoposet $\mathbb O(\mathbf{S})$ is Boolean. 
	\end{proposition}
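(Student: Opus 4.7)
The plan is to verify directly the Boolean condition: for any $x, y \in S'$, I must show that $x \land y = 0$ in $\mathbb{O}(\mathbf{S})$ if and only if $x \leq y'$. The principal tool will be the characterisation of the unary operation $'$ as a sectional pseudocomplement. Indeed, since $\mathbf{S}$ is sectionally pseudocomplemented, Remark~\ref{rempsecseudo} yields that $a * b$ is the sectional pseudocomplement of $a$ with respect to $b$. Taking $b = 0$, and using $U(a, 0) = U(a)$ together with $L(U(a)) = L(a)$, I conclude that $a' = a * 0$ is the greatest element $z \in S$ such that $L(a, z) = \{0\}$, and in particular $L(a, a') = \{0\}$.

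The second ingredient is a transfer step: for $x, y \in S'$, I will show that $L_{S'}(x, y) = \{0\}$ if and only if $L_S(x, y) = \{0\}$. The reverse direction is immediate since $0 \in S'$. For the forward direction, take $w \in L_S(x, y)$. Antitonicity of $'$ (i.e.\ (S3)) together with the involutivity of $'$ on $S'$ (established in Theorem~\ref{th9}(i)(a)) yields $w'' \leq x'' = x$ and $w'' \leq y'' = y$, so $w'' \in L_{S'}(x, y)$; moreover (S2) gives $w \leq w''$. If $L_{S'}(x, y) = \{0\}$, then $w'' = 0$ and hence $w = 0$.

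Combining the two ingredients gives the desired equivalence. If $x \leq y'$ then $L_S(x, y) \subseteq L_S(y, y') = \{0\}$, whence $x \land y = 0$ in $\mathbb{O}(\mathbf{S})$ by the transfer step. Conversely, if $x \land y = 0$ in $\mathbb{O}(\mathbf{S})$ then $L_S(x, y) = \{0\}$ by that same step, and the maximality of $y'$ established in the first paragraph forces $x \leq y'$. The main subtlety I expect is the transfer lemma, namely confirming that lower cones computed inside the subposet $S'$ already detect those in the ambient poset $S$; once that is in place, the Boolean condition becomes an essentially formal consequence of viewing $a' = a * 0$ as a sectional pseudocomplement.
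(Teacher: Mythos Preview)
Your proof is correct and follows essentially the same route as the paper's: both arguments pass from $x\land y=0$ in $S'$ to $L_S(x,y)=\{0\}$ via the closure $w\mapsto w''$ (using $w\leq w''\leq x''=x$ for $x\in S'$), and then invoke the sectional pseudocomplement property at $0$ to conclude $x\leq y'$. The paper's version is simply more compressed and omits the trivial converse direction, which holds in any orthoposet.
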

	
	\begin{proof}
		Let $a,b\in S'$ with $a\land b = 0$ (in $S'$) and consider $L(a,b)$ (in $S$).  If $c\in L(a,b)$ then $c''\in\mathbb{O}(\mathbf{S})$ and $c''\leq a,b$. So $c\leq c''\leq 0=a\land b$. Therefore $L(U(b,0),a)=\{0\}$ and, by Remark~\ref{rempsecseudo}, $a\leq b'$.
	\end{proof}
	
	\begin{lemma}\label{triplet}
		Let $\mathbf S=(S,\leq,*,1)$ be a skew Hilbert algebra with bottom element $0$ and $a\in S$. Then $a=a''\land(a''*a)$.
	\end{lemma}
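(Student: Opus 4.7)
The plan is to reduce the claim to the comparable case of Remark~\ref{rem2}, which states that $x \geq y$ implies $x \land (x*y) = y$. Applying this with $x := a''$ and $y := a$ gives immediately $a'' \land (a''*a) = a$. So the only thing I need to establish is the inequality $a \leq a''$.

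To show $a \leq a''$, I will use (S2). Since $0$ is the bottom element, $0 \leq a$, so by (S1), $0 * a = 1$. Now (S2), applied with ``$y$''$\, = 0$ and ``$x$''$\, = a$, states: if $0 * a = 1$ then $a * ((a * 0) * 0) = 1$, that is, $a * a'' = 1$. By (S1) again this means $a \leq a''$, as required.

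With $a \leq a''$ in hand, I invoke Remark~\ref{rem2}: in a skew Hilbert algebra, (S4) guarantees that for $x \geq y$ the infimum $x \land (x*y)$ exists and equals $y$. Taking $x = a''$ and $y = a$ yields $a = a'' \land (a''*a)$.

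There is really no obstacle here: both steps use exactly one of the defining conditions of a skew Hilbert algebra, and no further lattice-theoretic machinery is needed (in particular, $S$ need not be a lattice, since the only meet we assert to exist is the one guaranteed by (S4) itself). The proof is essentially a two-line verification.
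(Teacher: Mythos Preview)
Your proof is correct and follows exactly the same route as the paper's own proof: first establish $a\leq a''$ via (S2), then apply Remark~\ref{rem2} with $x=a''$ and $y=a$. The only difference is that you spell out why (S2) yields $a\leq a''$ (using $0*a=1$), whereas the paper simply asserts ``$a''\geq a$ by (S2)''.
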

	
	\begin{proof}
		We have $a''\geq a$ by (S2). From Remark~\ref{rem2} we obtain $a=a''\land(a''*a)$.
	\end{proof}
	
	Let $\mathbf S=(S,\leq,*,1)$ be a skew Hilbert algebra with bottom element $0$ and $a\in S$ and define
	\begin{align*}
		F_a & :=\{x\in S\mid x''=a\}, \\
		D(\mathbf S) & :=\{x\in S\mid x'=0\}, \\
		W(\mathbf S) & :=\{x\in S\mid\text{there exists some }y\in S\text{ with }y''*y=x\}.
	\end{align*}
	The elements of $D(\mathbf S)$ and $W(\mathbf S)$ are called {\em dense} and {\em weakly dense}, respectively. Note that (see \cite{CLP20}), for a  Hilbert algebra  $\mathbf S=(S,\leq,*,1)$ with bottom element $0$
	\begin{align*}
		& S'\cap D(\mathbf S)=\{1\}, \\
		& D(\mathbf S)\text{ is an upper subset of }\mathbf S, \\
		& (D(\mathbf S),\leq,*,1)\text{ is a Hilbert subalgebra of }\mathbf S.
	\end{align*}
	
	\begin{lemma}
		Let $\mathbf S=(S,\leq,*,1)$ be a skew Hilbert algebra with bottom element $0$. Then
		\begin{enumerate}[{\rm(i)}]
			\item $S'\cap D(\mathbf S)=\{1\}$.
			\item $D(\mathbf S)\text{ is an upper subset of }\mathbf S$.
			\item $(D(\mathbf S),\leq,*,1)\text{ is a skew Hilbert subalgebra of }\mathbf S$.
			\item $D(\mathbf S)\subseteq W(\mathbf S)$.
			\item $S'\cap W(\mathbf S) = \{1\}$.
		\end{enumerate}
	\end{lemma}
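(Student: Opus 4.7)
For parts \textbf{(i)} and \textbf{(ii)} my plan is to dispatch both by short direct calculations using the identities (1)--(3) established right after Definition~\ref{def3}. For \textbf{(i)}, I will first note that $1\in S'\cap D(\mathbf S)$: indeed $0'=0*0=1$ by (S1) (since $0\le 0$) and $1'=1*0=0$ by identity (1). Conversely, given $x\in S'\cap D(\mathbf S)$, write $x=y'$; then $y''=x'=0$, and combining this with the inequality $y\le y''$ derived from (S2) in the proof of Theorem~\ref{th9} forces $y=0$, whence $x=0'=1$. For \textbf{(ii)}, applying (S3) to $x\le y$ gives $y*0\le x*0$, i.e.\ $y'\le x'$, so $x\in D(\mathbf S)$ implies $y\in D(\mathbf S)$.

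For \textbf{(iii)} I must check that $D(\mathbf S)$ contains $1$ and is closed under $*$, and that the axioms descend. Closure follows from identity (2) together with (ii): for any $x\in S$ and $y\in D(\mathbf S)$, identity (2) gives $y\le x*y$, and (ii) then yields $x*y\in D(\mathbf S)$. The conditions (S1)--(S3) are universal Horn sentences in $*$ and survive verbatim. For (S4), the key observation is that $D(\mathbf S)$ is upward closed by (ii), so for $x,y\in D(\mathbf S)$ all upper bounds already lie in $D(\mathbf S)$; intersecting the equality $L(U(x,y),x*y)=L(y)$ in $\mathbf S$ with $D(\mathbf S)$ then gives the corresponding equality inside $D(\mathbf S)$. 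Part \textbf{(iv)} is an even shorter calculation: if $x\in D(\mathbf S)$, then $x''=0'=1$, so $x''*x=1*x=x$ by identity (1), and witnessing $x$ by $y:=x$ yields $x\in W(\mathbf S)$.

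The genuine content of the lemma lies in \textbf{(v)}. Given $x\in S'\cap W(\mathbf S)$, write $x=z'$ and $x=y''*y$; I will argue $x=1$ in three moves. First, from $x\in S'$ and the triple-negation identity $a'''=a'$ (established inside the proof of Theorem~\ref{th9}) deduce $x''=z'''=z'=x$. Second, identity (2) with $y''$ in the role of the dummy variable gives $y\le y''*y=x$, whereupon (S3) yields $x'\le y'$ and a second application of (S3) promotes this to $y''\le x''=x$. Third, invoke Remark~\ref{rem2} for the pair $y\le y''$: it produces $y''\wedge(y''*y)=y$, that is, $y''\wedge x=y$; combined with the just-obtained $y''\le x$, this collapses to $y''=y$, and hence $x=y*y=1$ by (S1). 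The main obstacle I anticipate is seeing how to use the hypothesis $x\in S'$ at all; the right use is precisely the double application of (S3) that promotes $y\le x$ to $y''\le x$, after which the triplet-like identity of Remark~\ref{rem2} finishes the proof.
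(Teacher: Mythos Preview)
Your proposal is correct and follows essentially the same line as the paper's proof. The only noteworthy differences are organizational: for (i) you give a direct argument (via $y\le y''=x'=0$), whereas the paper simply derives (i) from (iv) and (v); and for (iii) you explicitly verify that (S4) restricts to the upper set $D(\mathbf S)$, while the paper records only the closure of $D(\mathbf S)$ under $*$. For (v), your use of Remark~\ref{rem2} in the form $y''\wedge(y''*y)=y$ combined with $y''\le x''=x$ is exactly the paper's (S4)-computation $L(c)=L(U(c'',c),c''*c)=L(c'',c''*c)=L(c'')$ rewritten in infimum language.
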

	
	\begin{proof}
		Suppose that $\mathbf S=(S,\leq,*,1)$ is a skew Hilbert algebra with bottom element.
		\begin{enumerate}[(i)]
			\item Follows from the inclusion given in (iv) and (v).
			\item Let $a\in D(\mathbf S)$ and $a\leq b$. This yields $a*b=1$, and from (S3) we also obtain $(b*0)*(a*0)=1$. Since $a\in D(\mathbf S)$ then $(b*0)*0=1$ which implies $b*0=0$. Hence we get $b\in D(\mathbf S)$.
			\item This follows from the fact that $D(\mathbf S)\text{ is an upper subset of }\mathbf S$ and hence $a,b\in D(\mathbf S)$ implies $b\leq a*b\in D(\mathbf S)$.
			\item If $a\in D(\mathbf S)$ then $a'=0$ and hence $a=1*a=a''*a\in W(\mathbf S)$, i.e., $D(\mathbf S)\subseteq W(\mathbf S)$.
			\item Let $a\in S'\cap W(\mathbf S)$. Then $a=c''*c$ for some $c\in S$ by definition, and $a''=a$ from the proof of Theorem~\ref{th9}. Correspondingly, we have $(c''*c)''=c''*c$ and also $c\leq c''$. Moreover, since $c\leq c''*c$ we have $c''\leq(c''*c)''=c''*c$. By (S4) we get
			\[
			L(c)=L(U(c'',c),c''*c)=L(U(c''),c''*c)=L(c'',c''*c)=L(c'').
			\]
			Thus $c=c''$ and consequently $a=1$. Hence we obtain $S'\cap W(\mathbf S)=\{1\}$.
		\end{enumerate}
	\end{proof}
	
	\begin{example}
		Consider the lattice $(S,\leq)$ visualized in Fig.~5
		
		\vspace*{-2mm}
		
		\begin{center}
			\setlength{\unitlength}{7mm}
			\begin{picture}(4,10)
				\put(2,1){\circle*{.3}}
				\put(1,3){\circle*{.3}}
				\put(3,3){\circle*{.3}}
				\put(1,5){\circle*{.3}}
				\put(3,5){\circle*{.3}}
				\put(2,7){\circle*{.3}}
				\put(2,9){\circle*{.3}}
				\put(2,1){\line(-1,2)1}
				\put(2,1){\line(1,2)1}
				\put(2,7){\line(-1,-2)1}
				\put(2,7){\line(1,-2)1}
				\put(1,3){\line(0,1)2}
				\put(3,3){\line(0,1)2}
				\put(2,7){\line(0,1)2}
				\put(1.85,.25){$0$}
				\put(.3,2.85){$a$}
				\put(3.4,2.85){$b$}
				\put(.3,4.85){$c$}
				\put(3.4,4.85){$d$}
				\put(2.4,6.85){$e$}
				\put(1.85,9.4){$1$}
				\put(1.2,-.75){{\rm Fig.~5}}
			\end{picture}
		\end{center}
		
		\vspace*{4mm}
		
		which is sectionally pseudocomplemented with the following binary operation:
		\[
		\begin{array}{c|ccccccc}
			* & 0 & a & b & c & d & e & 1  \\
			\hline
			0 & 1 & 1 & 1 & 1 & 1 & 1 & 1 \\
			a & d & 1 & b & 1 & d & 1 & 1 \\
			b & c & a & 1 & c & 1 & 1 & 1 \\
			c & d & a & b & 1 & d & 1 & 1 \\
			d & c & a & b & c & 1 & 1 & 1 \\
			e & 0 & a & b & c & d & 1 & 1 \\
			1 & 0 & a & b & c & d & e & 1
		\end{array}
		\]
		Automatically, $\mathbf S=(S,\leq,*,1)$ is a skew Hilbert algebra with bottom element $0$. We have
		\begin{align*}
			D(\mathbf S) & =\{e,1\}, \\
			W(\mathbf S) & =\{a,b,e,1\}
		\end{align*}
		and hence $D(\mathbf S)\neq W(\mathbf S)$. Moreover, $W(\mathbf S)$ is not an upper set of $\mathbf S$ since $b\in W(\mathbf S)$ and $b\leq d$, but $d\notin W(\mathbf S)$.
	\end{example}
	
	\begin{remark}
		However, in the bounded Hilbert algebra case, the sets of dense and weakly dense elements 
		coincide {\rm(}see {\rm\cite[Lemma 3.7]{CLP20}} for the inclusion $W(\mathbf S) \subseteq D(\mathbf S)${\rm)}.
	\end{remark}

	
	\section{Congruences in skew Hilbert algebras}\label{sec: structheory}
	In this last section, we will investigate the structure theory for the variety $\mathcal V$ of lattice skew Hilbert algebras (see e.g.\ \cite{CEL}). In particular, we will show that $\mathcal V$ is arithmetical and weakly regular. Moreover, since any congruence on a lattice skew Hilbert algebra is determined by its $1$-coset, a further task will be characterizing these sets by introducing a suitable notion of \emph{filter} and then proving that the complete lattice of filters on a lattice skew Hilbert algebra $\mathbf{L}$ is isomorphic to the complete lattice of congruences on $\mathbf{L}$. Subsequently, by extending an analogous notion for Hilbert algebras (see e.g.\ \cite{BUS}), we will introduce the concept of a {\em deductive system} on a skew Hilbert algebra. A full characterization thereof will follow. Finally, we introduce a notion of ``order-compatible'' congruence for (strong) skew Hilbert algebras which need not be lattice-ordered. In turn, we show that, also in this case, many of the aforementioned results hold.  
	
	First, let us recall the following concepts.
	
	Let $\mathcal C$ be a class of algebras of the same type and $\mathcal W$ a variety with equationally definable constant $1$. Then the class $\mathcal C$ is called
	\begin{itemize}
		\item {\em congruence permutable} if $\Theta\circ\Phi=\Phi\circ\Theta$ for all $\mathbf A\in\mathcal C$ and $\Theta,\Phi\in\Con\mathbf A$,
		\item {\em congruence distributive} if $(\Theta\lor\Phi)\land\Psi=(\Theta\land\Psi)\lor(\Phi\land\Psi)$ for all $\mathbf A\in\mathcal C$ and $\Theta,\Phi,\Psi\in\Con\mathbf A$,
		\item {\em arithmetical} if it is both congruence permutable and congruence distributive,
		\item {\em weakly regular} if for each $\mathbf A=(A,F)\in\mathcal C$ and all $\Theta,\Phi\in\Con\mathbf A$ with $[1]\Theta=[1]\Phi$ we have $\Theta=\Phi$.
	\end{itemize}
	The following is well-known (cf.\ \cite{CEL}, Theorem~3.1.8, Corollary~3.2.4 and Theorem~6.4.3):
	\begin{itemize}
		\item The class $\mathcal C$ is congruence permutable if there exists a so-called {\em Maltsev term}, i.e.\ a ternary term $p$ satisfying
		\[
		p(x,x,y)\approx p(y,x,x)\approx y,
		\]
		\item The class $\mathcal C$ is congruence distributive if there exists a so-called {\em majority term}, i.e.\ a ternary term $m$ satisfying
		\[
		m(x,x,y)\approx m(x,y,x)\approx m(y,x,x)\approx x,
		\]
		\item The variety $\mathcal W$ is weakly regular if and only if there exists a positive integer $n$ and binary terms $t_1,\ldots,t_n$ such that
		\[
		t_1(x,y)=\cdots=t_n(x,y)=1\text{ if and only if }x=y.
		\]
	\end{itemize}
	
	We are going to show that the variety $\mathcal V$ of lattice skew Hilbert algebras satisfies very strong congruence properties.
	
	\begin{theorem}\label{th7}
		The variety $\mathcal V$ of lattice skew Hilbert algebras is arithmetical and weakly regular.
	\end{theorem}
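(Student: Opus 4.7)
The plan is to verify the two congruence-theoretic properties by exhibiting suitable Maltsev, majority, and separating binary terms, invoking the well-known characterisations recalled just above the theorem.

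For congruence distributivity, the variety $\mathcal V$ has a lattice reduct, so the classical lattice-theoretic majority term
\[
m(x,y,z):=(x\land y)\lor(x\land z)\lor(y\land z)
\]
is immediately available and satisfies $m(x,x,y)\approx m(x,y,x)\approx m(y,x,x)\approx x$.

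For congruence permutability, I would propose the Maltsev term
\[
p(x,y,z):=\bigl((x*y)*z\bigr)\land\bigl((z*y)*x\bigr).
\]
To verify $p(x,x,y)\approx y$, use (H1) (i.e.\ $x*x\approx 1$) and identity (1) ($1*x\approx x$) to reduce the first conjunct to $y$, and then use (S2') in the form $y\leq (y*x)*x$ to collapse the meet: $y\land ((y*x)*x)=y$. Symmetrically $p(y,x,x)\approx y$ follows from the same two ingredients applied to the other conjunct. The only non-routine point is that strongness (S2') is essential here, but this holds in every lattice skew Hilbert algebra by the remark following Definition~\ref{def3}.

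For weak regularity, the natural candidates are the two binary terms
\[
t_1(x,y):=x*y,\qquad t_2(x,y):=y*x.
\]
By (S1), $t_1(x,y)=t_2(x,y)=1$ is equivalent to $x\leq y$ and $y\leq x$, which by antisymmetry of the partial order underlying the lattice amounts to $x=y$. Hence the characterization of weak regularity recalled above applies with $n=2$. Putting the three ingredients together yields the theorem; I expect no substantial obstacle beyond ensuring that (S2') is used correctly in the verification of the Maltsev identities.
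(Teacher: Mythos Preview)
Your proposal is correct and follows essentially the same route as the paper: the identical Maltsev term $p(x,y,z)=((x*y)*z)\land((z*y)*x)$ verified via $x*x\approx1$, $1*x\approx x$ and (S2')/(L2), and the same pair $t_1=x*y$, $t_2=y*x$ for weak regularity. The only cosmetic difference is that the paper dispatches congruence distributivity in one sentence by appeal to the lattice reduct, whereas you write out the standard majority term explicitly.
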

	
	\begin{proof}
		Since the underlying posets are lattices, $\mathcal V$ is congruence distributive. Now put
		\[
		p(x,y,z):=((x*y)*z)\land((z*y)*x).
		\]
		By (L2) we have $z\leq(z*x)*x$ and $x\leq(x*z)*z$ and hence
		\begin{align*}
			p(x,x,z) & \approx((x*x)*z)\land((z*x)*x)\approx(1*z)\land((z*x)*x)\approx z\land((z*x)*x)\approx z, \\
			p(x,z,z) & \approx((x*z)*z)\land((z*z)*x)\approx((x*z)*z)\land(1*x)\approx((x*z)*z)\land x\approx x,
		\end{align*}
		i.e.\ $p$ is a Maltsev term which means that $\mathcal V$ is also congruence permutable and therefore arithmetical. For weak regularity, consider the binary terms
		\begin{align*}
			t_1(x,y) & :=x*y, \\
			t_2(x,y) & :=y*x.
		\end{align*}
		Clearly, $t_1(x,x)\approx t_2(x,x)\approx1$. Conversely, $t_1(x,y)=t_2(x,y)=1$ implies $x\leq y\leq x$ and therefore $x=y$.
	\end{proof}
	
	Weak regularity means that every congruence $\Theta$ on a lattice skew Hilbert algebra $\mathbf L$ is fully determined by its {\em kernel}, i.e.\ the congruence class $[1]\Theta$. Since $\Theta$ is also a lattice congruence, every class of it is a convex subset of $\mathbf L$. Hence our first task is to describe these classes. For this purpose we introduce the following concept:
	
	\begin{definition}\label{def1}
		Let $\mathbf L=(L,\lor,\land,*,1)$ be a lattice skew Hilbert algebra. A {\em filter} of $\mathbf L$ is a subset $F$ of $L$ containing $1$ such that $x*y,y*x,z*v,v*z\in F$ implies
		\[
		(x\lor z)*(y\lor v),(x\land z)*(y\land v),(x*z)*(y*v)\in F.
		\]
		Let $\Fil\mathbf L$ denote the set of all filters of $\mathbf L$. For any subset $M$ of $L$ define a binary relation $\Phi(M)$ on $L$ as follows:
		\[
		\Phi(M):=\{(x,y)\in L^2\mid x*y,y*x\in M\}.
		\]
	\end{definition}
	
	The relationship between congruences and filters in lattice skew Hilbert algebras is illuminated in the next two theorems.
	
	\begin{theorem}\label{th1}
		Let $\mathbf L=(L,\lor,\land,*,1)$ be a lattice skew Hilbert algebra and $\Theta\in\Con\mathbf L$. Then $[1]\Theta\in\Fil\mathbf L$ and for any $x,y\in L$,
		\[
		(x,y)\in\Theta\text{ if and only if }x*y,y*x\in[1]\Theta,
		\]
		i.e.\ $\Phi([1]\Theta)=\Theta$.
	\end{theorem}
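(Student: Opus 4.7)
The plan is to establish the biconditional $\Phi([1]\Theta)=\Theta$ first, and then to derive the filter property of $[1]\Theta$ as a direct consequence. The key identity I would rely on is (L4), namely $(x\lor y)\land(x*y)\approx y$, which reduces everything to routine congruence bookkeeping.

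First I would prove $\Phi([1]\Theta)\subseteq\Theta$. Suppose $x*y,y*x\in[1]\Theta$, i.e.\ $x*y\equiv 1\equiv y*x\pmod{\Theta}$. Applying $\Theta$-compatibility to (L4), I obtain $y=(x\lor y)\land(x*y)\equiv (x\lor y)\land 1=x\lor y\pmod{\Theta}$ and, by symmetry, $x\equiv x\lor y\pmod{\Theta}$. Transitivity then gives $(x,y)\in\Theta$. The reverse inclusion $\Theta\subseteq\Phi([1]\Theta)$ is immediate: if $(x,y)\in\Theta$, then by compatibility with $*$ together with the identity $y*y\approx 1$ (which follows from (S1)), we get $x*y\equiv y*y=1\pmod{\Theta}$, so $x*y\in[1]\Theta$, and analogously $y*x\in[1]\Theta$.

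Once the biconditional has been established, the filter condition of Definition~\ref{def1} falls out automatically. Given $x*y,y*x,z*v,v*z\in[1]\Theta$, I apply the biconditional to obtain $(x,y),(z,v)\in\Theta$; invoke the compatibility of $\Theta$ with $\lor$, $\land$, $*$ to deduce $(x\lor z,y\lor v),(x\land z,y\land v),(x*z,y*v)\in\Theta$; and reapply the biconditional to conclude that each of $(x\lor z)*(y\lor v)$, $(x\land z)*(y\land v)$, $(x*z)*(y*v)$ lies in $[1]\Theta$. Since $1\in[1]\Theta$ is automatic, this gives $[1]\Theta\in\Fil\mathbf L$.

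I do not anticipate a genuine obstacle. The whole argument turns on a single invocation of (L4) — the ingredient that lets a $\Theta$-equivalence of the form $x*y\equiv 1$ be promoted to $x\lor y\equiv y$, and hence to $y\leq_\Theta x$ in the quotient. The two-sided form of the filter axiom in Definition~\ref{def1} has evidently been engineered precisely to mirror the three operations of a lattice skew Hilbert algebra, so once the biconditional is in hand the filter property matches tautologically.
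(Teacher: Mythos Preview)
Your proof is correct and follows essentially the same architecture as the paper's: establish $\Phi([1]\Theta)=\Theta$ first, then read off the filter axioms from compatibility. The only difference is the identity invoked for the direction $\Phi([1]\Theta)\subseteq\Theta$: you use (L4) to get $y=(x\lor y)\land(x*y)\mathrel\Theta x\lor y$ and symmetrically for $x$, whereas the paper uses (L2) and (1) to write $a=a\land((a*b)*b)\mathrel\Theta(1*a)\land(1*b)\mathrel\Theta((b*a)*a)\land b=b$; both are one-line computations and the distinction is cosmetic.
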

	
	\begin{proof}
		Let $a,b\in L$. If $(a,b)\in\Theta$ then $a*b,b*a\in[a*a]\Theta=[1]\Theta$, i.e.\ $(a,b)\in\Phi([1]\Theta)$.
		Conversely, if $(a,b)\in\Phi([1]\Theta)$ then $a*b,b*a\in[1]\Theta$ and hence, using (1) and (L2),
		\[
		a=a\land((a*b)*b)\mathrel\Theta(1*a)\land(1*b)\mathrel\Theta((b*a)*a)\land b=b,
		\]
		i.e.\ $(a,b)\in\Theta$. This shows $\Phi([1]\Theta)=\Theta$. Due to the substitution property of $\Theta$ with respect to $\lor$, $\land$ and $*$ we see that $[1]\Theta$ satisfies the conditions from Definition~\ref{def1} and hence $[1]\Theta\in\Fil\mathbf L$.
	\end{proof}
	
	Theorem~\ref{th1} witnesses that lattice skew Hilbert algebras are weakly regular.
	
	We can prove also the converse.
	
	\begin{theorem}\label{th2}
		Let $\mathbf L=(L,\lor,\land,*,1)$ be a lattice skew Hilbert algebra and $F\in\Fil\mathbf L$. Then $\Phi(F)\in\Con\mathbf L$ and $[1](\Phi(F))=F$.
	\end{theorem}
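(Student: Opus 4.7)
The plan is to check in turn that $\Phi(F)$ is (a) reflexive, (b) symmetric, (c) compatible with each basic operation $\lor,\land,*$, and (d) transitive, and then to verify $[1]\Phi(F)=F$ by a direct computation. Parts (a)--(c) are almost immediate from the filter axioms; the only real work is in (d), where the filter clause does not obviously suffice, and where I would bring in the Maltsev term extracted in the proof of Theorem~\ref{th7}.

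Reflexivity follows from $x*x=1\in F$, and symmetry is immediate from the symmetric form of the defining condition of $\Phi(F)$. For compatibility, suppose $(a,b),(c,d)\in\Phi(F)$, so that $a*b,b*a,c*d,d*c\in F$. Definition~\ref{def1} then forces $(a\lor c)*(b\lor d)$, $(a\land c)*(b\land d)$ and $(a*c)*(b*d)$ into $F$, and applying the same implication to the quadruple $(b,a,d,c)$ (whose four test elements are literally the same four members of $F$) supplies the reverse inequalities; hence the pairs $(a\lor c,b\lor d)$, $(a\land c,b\land d)$ and $(a*c,b*d)$ all lie in $\Phi(F)$.

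The genuine obstacle is transitivity, which the filter clause does not deliver on its own. For this I would invoke the Maltsev term $p(x,y,z):=((x*y)*z)\land((z*y)*x)$ produced in the proof of Theorem~\ref{th7}, which satisfies $p(x,x,y)\approx y$ and $p(x,y,y)\approx x$. Given $(a,b),(b,c)\in\Phi(F)$, the componentwise triple $\bigl((a,b),(b,b),(b,c)\bigr)$ consists of $\Phi(F)$-related pairs (reflexivity supplies $(b,b)$), so compatibility of $\Phi(F)$ with $*$ and $\land$ yields $(p(a,b,b),p(b,b,c))\in\Phi(F)$, which collapses to $(a,c)\in\Phi(F)$ by the Maltsev identities. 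Hence $\Phi(F)$ is an equivalence and, being compatible with all basic operations, belongs to $\Con\mathbf L$.

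For the last assertion, $u\in[1]\Phi(F)$ iff $u*1,1*u\in F$; by the derived identity (3) and $1\in F$ the first condition is automatic, and by (1) the second condition is equivalent to $u\in F$, so $[1]\Phi(F)=F$.
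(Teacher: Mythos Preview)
Your proof is correct and follows essentially the same route as the paper: reflexivity, symmetry, and compatibility are handled identically, and for transitivity the paper invokes Werner's theorem (i.e.\ that in a congruence-permutable variety any reflexive compatible relation is transitive), whereas you simply unpack that argument using the explicit Maltsev term $p(x,y,z)=((x*y)*z)\land((z*y)*x)$ from Theorem~\ref{th7}. The verification of $[1]\Phi(F)=F$ is the same in both.
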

	
	\begin{proof}
		Let $a,b,c,d\in L$. Evidently, $\Phi(F)$ is symmetric and since $1\in F$ and $x*x\approx1$ by (L1), it is also reflexive. Assume $a*b,b*a,c*d,d*c\in F$. Then by Definition~\ref{def1}
		\[
		(a\lor c)*(b\lor d),(b\lor d)*(a\lor c),(a\land c)*(b\land d),(b\land d)*(a\land c),(a*c)*(b*d),(b*d)*(a*c)\in F
		\]
		whence
		\[
		(a\lor c,b\lor d),(a\land c,b\land d),(a*c,b*d)\in\Phi(F).
		\]
		Hence $\Phi(F)$ has the substitution property with respect to all basic operations of $\mathbf L$. Since the variety $\mathcal V$ is congruence permutable, $\Phi(F)$ is also transitive, see e.g.\ Werner's Theorem (\cite W) or Corollary~3.1.13 in \cite{CEL}, and hence $\Phi(F)\in\Con\mathbf L$. Finally, the following are equivalent:
		\begin{align*}
			a & \in[1](\Phi(F)), \\
			(a,1) & \in\Phi(F), \\
			a*1,1*a & \in F, \\
			1,a & \in F, \\
			a & \in F
		\end{align*}
		and hence $[1](\Phi(F))=F$.
	\end{proof}
	
	It is elementary to check that for every lattice skew Hilbert algebra $\mathbf L$, $(\Fil\mathbf L,\subseteq)$ is a complete lattice.
	
	The following corollary follows from Theorems~\ref{th1} and \ref{th2}.
	
	\begin{corollary}\label{cor1}
		For every lattice skew Hilbert algebra $\mathbf L$ the mappings $\Phi\mapsto[1]\Phi$ and $F\mapsto\Phi(F)$ are mutually inverse isomorphisms between the complete lattices $(\Con\mathbf L,\subseteq)$ and $(\Fil\mathbf L,\subseteq)$.
	\end{corollary}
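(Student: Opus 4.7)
The plan is to observe that the bulk of the work has already been done in Theorems~\ref{th1} and \ref{th2}, so the proof of Corollary~\ref{cor1} amounts to assembling those results and checking order-preservation. Specifically, Theorem~\ref{th1} shows that for any $\Theta\in\Con\mathbf L$ we have $[1]\Theta\in\Fil\mathbf L$ and $\Phi([1]\Theta)=\Theta$, while Theorem~\ref{th2} shows that for any $F\in\Fil\mathbf L$ we have $\Phi(F)\in\Con\mathbf L$ and $[1](\Phi(F))=F$. Together, these immediately yield that the two maps $\Theta\mapsto[1]\Theta$ and $F\mapsto\Phi(F)$ are well-defined and mutually inverse bijections between $\Con\mathbf L$ and $\Fil\mathbf L$.

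Next, I would verify that both maps are monotone with respect to inclusion. If $\Theta_1\subseteq\Theta_2$ in $\Con\mathbf L$, then any $a\in[1]\Theta_1$ satisfies $(a,1)\in\Theta_1\subseteq\Theta_2$, so $a\in[1]\Theta_2$; hence $[1]\Theta_1\subseteq[1]\Theta_2$. Conversely, if $F_1\subseteq F_2$ in $\Fil\mathbf L$, and $(x,y)\in\Phi(F_1)$, then $x*y,y*x\in F_1\subseteq F_2$, so $(x,y)\in\Phi(F_2)$, giving $\Phi(F_1)\subseteq\Phi(F_2)$.

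Once both maps are order-preserving and mutually inverse, standard order theory gives that they are isomorphisms of the two posets $(\Con\mathbf L,\subseteq)$ and $(\Fil\mathbf L,\subseteq)$. Since $(\Con\mathbf L,\subseteq)$ is a complete lattice (a well-known fact for any algebra) and the isomorphism transports suprema and infima, $(\Fil\mathbf L,\subseteq)$ is likewise a complete lattice and the bijection is automatically a complete lattice isomorphism. Alternatively, one may directly observe that arbitrary intersections of filters are filters (from Definition~\ref{def1}), so $(\Fil\mathbf L,\subseteq)$ is a complete lattice in its own right, and then the two monotone mutual inverses give a complete lattice isomorphism.

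There is no real obstacle here: once Theorems~\ref{th1} and \ref{th2} are in place, the bijectivity is immediate and the monotonicity of both maps is a direct unfolding of the definitions. The only mildly delicate point is stating cleanly that mutually inverse monotone maps between posets automatically preserve arbitrary joins and meets when they exist, and hence constitute a complete lattice isomorphism.
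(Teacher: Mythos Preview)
Your proposal is correct and follows exactly the approach the paper indicates: the corollary is stated to follow directly from Theorems~\ref{th1} and \ref{th2}, with no further argument given. You have simply made explicit the monotonicity check and the passage from mutually inverse order-preserving bijections to a complete lattice isomorphism, which the paper leaves to the reader.
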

	
	Since the operation $*$ may serve as implication in the logic based on a skew Hilbert algebra, we can consider also corresponding deductions. For this reason we introduce the following concept.
	
	A {\em deductive system} of a skew Hilbert algebra $(S,\leq,*,1)$ is a subset $D$ of $S$ containing $1$ and satisfying the following condition:
	\[
	\text{if }a\in D,b\in S\text{ and }a*b\in D\text{ then }b\in D.
	\]
	In the following $(F*(F*a))*a$ denotes the set $\{(x*(y*a))*a\mid x,y\in F\}$. Analogously, we proceed in similar cases.
	
	\begin{theorem}
		Let $\mathbf L=(L,\lor,\land,*)$ be a lattice skew Hilbert algebra, $\Theta\in\Con\mathbf L$, $F\in\Fil\mathbf L$ and $a,b\in L$. Then
		\begin{enumerate}[{\rm(i)}]
			\item Every class of $\Theta$ is a convex subset of $(L,\leq)$,
			\item $F$ is a deductive system of $\mathbf L$,
			\item $F$ is a lattice filter of $\mathbf L$,
			\item $a*(F\land a)\subseteq F$ and $(F*(F*a))*a\subseteq F$.
		\end{enumerate}
	\end{theorem}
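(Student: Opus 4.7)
The plan is to exploit Corollary~\ref{cor1}, which identifies each filter $F\in\Fil\mathbf L$ with the $1$-coset of the congruence $\Phi(F)\in\Con\mathbf L$; in particular, $f\in F$ iff $f\mathrel{\Phi(F)}1$. Once this reduction is in place, all four assertions collapse to short substitution arguments, using the derived identities (1) ($1*x\approx x$), (L1), and (L2).

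For (i), I would invoke the standard fact that congruences of lattices have convex classes: if $x,z\in[a]\Theta$ and $x\leq y\leq z$, then compatibility of $\land$ with $\Theta$ gives
\[
y=y\land z\mathrel\Theta y\land x=x,
\]
so $y\in[a]\Theta$. Since $\mathbf L$ carries a lattice reduct and $\Theta\in\Con\mathbf L$ respects $\land$, this applies directly.

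For parts (ii)–(iv), fix $F\in\Fil\mathbf L$ and write $\equiv$ for congruence modulo $\Phi(F)$. In (ii), if $a\in F$ and $a*b\in F$, then $a\equiv 1$ and $a*b\equiv 1$, whence $b=1*b\equiv a*b\equiv 1$, i.e.\ $b\in F$. Part (iii) follows at once: the upward-closure clause is an application of (ii), since $a\in F$ and $a\leq b$ give $a*b=1\in F$ and hence $b\in F$; closure under $\land$ follows from $a,b\equiv 1$, which yields $a\land b\equiv 1\land 1=1$. For (iv), the same pattern yields both inclusions: for $f\in F$, $f\equiv 1$ implies $f\land a\equiv a$, hence $a*(f\land a)\equiv a*a=1$; and for $f,g\in F$, one obtains successively $g*a\equiv a$ and $f*(g*a)\equiv a$, so $(f*(g*a))*a\equiv a*a=1$.

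The argument encounters no real obstacle once Corollary~\ref{cor1} is in place; the only care required is to establish (ii) before invoking it for the upward-closure half of (iii). Everything else is a one-line computation modulo $\Phi(F)$.
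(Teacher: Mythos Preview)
Your proposal is correct and follows essentially the same line as the paper: both reduce (ii)--(iv) to substitution computations modulo $\Phi(F)$ using $F=[1]\Phi(F)$, and (i) to the standard convexity of lattice congruence classes. The only cosmetic differences are that the paper uses $\lor$ rather than $\land$ in (i), and obtains upward closure in (iii) directly from $a\lor b\equiv 1\lor b=1$ rather than via (ii); neither affects the substance of the argument.
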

	
	\begin{proof}
		\
		\begin{enumerate}[(i)]
			\item If $c,d\in[a]\Theta$ and $c\leq b\leq d$ then
			\[
			b=c\lor b\in[d\lor b]\Theta=[d]\Theta=[a]\Theta.
			\]
			\item If $a,a*b\in F$ then
			\[
			b=1*b\in[a*b](\Phi(F))=[1](\Phi(F))=F.
			\]
			\item If $a\in F$ then
			\[
			a\lor b\in[1\lor b](\Phi(F))=[1](\Phi(F))=F.
			\]
			Moreover, if $a,b\in F$ then
			\[
			a\land b\in[1\land1](\Phi(F))=[1](\Phi(F))=F.
			\]
			\item
			\begin{align*}
				a*(F\land a) & \subseteq[a*(1\land a)](\Phi(F))=[a*a](\Phi(F))=[1](\Phi(F))=F, \\
				(F*(F*a))*a & \subseteq[(1*(1*a))*a](\Phi(F))=[(1*a)*a](\Phi(F))=[a*a](\Phi(F))= \\
				& =[1](\Phi(F))=F.
			\end{align*}
		\end{enumerate}
	\end{proof}
	
	In what follows, we consider congruences in non-lattice skew Hilbert algebras.
	
	Non-lattice skew Hilbert algebras have only one everywhere defined operation, namely $*$. However, the concept of a congruence should respect also the partial order relation. Hence we define
	
	\begin{definition}
		A binary relation $\rho$ on a poset $(P,\leq)$ is called {\em $\min$-stable} if whenever $(a,b),(c,d)\in\rho$, $a$ and $c$ are comparable with each other and $b$ and $d$ are comparable with each other then
		\[
		(\min(a,c),\min(b,d))\in\rho.
		\]
	\end{definition}
	
	Now we define a congruence on a skew Hilbert algebra as follows:
	
	\begin{definition}
		Let $\mathbf S=(S,\leq,*,1)$ be a skew Hilbert algebra. Then
		\begin{itemize}
				\item An {\em algebraic congruence} on $\mathbf S$ is a congruence on its reduct $(S,*)$. 
				\item A {\em congruence} on $\mathbf S$ is a $\min$-stable algebraic congruence on $\mathbf S$. 
		\end{itemize}
		Let $\Con\mathbf S$ denote the set of all congruences on $\mathbf S$.
	\end{definition}
	

	
	\begin{remark}
		Note that any congruence on a lattice skew Hilbert algebra $\mathbf L=(L,\lor,\land,*,1)$ is automatically a congruence on the underlying skew Hilbert algebra $\mathbf L=(L,\leq,*,1)$.
		
		However, a congruence on a skew Hilbert algebra that is a lattice may not be a congruence on the underlying lattice (see the following counter-example).
	\end{remark}
	
	\begin{example}
		If $(L,\lor,\land)$ denotes the lattice considered in Example~\ref{ex1} {\rm(a)}, we define a binary operation $*$ on $L$ by
		\[
		\begin{array}{c|ccccccc}
			* & 0 & a & b & c & d & e & 1 \\
			\hline
			0 & 1 & 1 & 1 & 1 & 1 & 1 & 1 \\
			a & 0 & 1 & b & 1 & d & e & 1 \\
			b & 0 & a & 1 & 1 & 1 & 1 & 1 \\
			c & 0 & a & b & 1 & d & e & 1 \\
			d & 0 & a & e & c & 1 & e & 1 \\
			e & 0 & a & d & c & d & 1 & 1 \\
			1 & 0 & a & b & c & d & e & 1
		\end{array}
		\]
		and we put
		\[
		\Theta:=\{0\}^2\cup\{a\}^2\cup\{b,e\}^2\cup\{c,d,1\}^2
		\]
		then $\mathbf S:=(L,\leq,*,1)$ is a lattice skew Hilbert algebra, $\Theta\in\Con\mathbf S$, but $\Theta\notin\Con(L,\lor,\land)$ since
		\[
		(c,d)\in\Theta\text{, but }(c\land a,d\land a)=(a,0)\notin\Theta.
		\]
	\end{example}
	
	Using the $\min$-stability property of congruences in skew Hilbert algebras we can prove:
	
	\begin{theorem}
		Let $\mathbf S=(S,\leq,*,1)$ be a skew Hilbert algebra and $\Theta\in\Con\mathbf S$. Then every class of $\Theta$ is a convex subset of $(S,\leq)$.
	\end{theorem}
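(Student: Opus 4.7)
The plan is to show that whenever $a,c\in[b]\Theta$ and $a\leq d\leq c$, then $d\in[b]\Theta$. By transitivity of $\Theta$ and since $[a]\Theta=[b]\Theta=[c]\Theta$, it suffices to prove $(d,a)\in\Theta$, starting from the hypothesis $(a,c)\in\Theta$ together with $a\leq d\leq c$.

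First I would use the algebraic-congruence property applied to the operation $*$. From $(d,d)\in\Theta$ (reflexivity) and $(a,c)\in\Theta$ we obtain $(d*a,d*c)\in\Theta$. Since $d\leq c$, property (S1) gives $d*c=1$, so $(d*a,1)\in\Theta$. Applying $*$ once more, now to the pairs $(d*a,1)$ and $(a,a)$, yields $((d*a)*a,\,1*a)\in\Theta$. Using the derived identity $1*x\approx x$ established right after Definition~\ref{def3}, this simplifies to $((d*a)*a,\,a)\in\Theta$.

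Next, since $a\leq d$, axiom (S2) (with $y:=a$, $x:=d$) produces $d*((d*a)*a)=1$, i.e.\ $d\leq(d*a)*a$. Together with the assumption $a\leq d$, the element $d$ is therefore comparable both with $(d*a)*a$ (being below it) and with $a$ (being above it). Applying min-stability to the two pairs $(d,d)\in\Theta$ and $((d*a)*a,a)\in\Theta$ then gives
\[
\bigl(\min(d,(d*a)*a),\;\min(d,a)\bigr)=(d,a)\in\Theta,
\]
which is exactly the claim. Hence $d\in[a]\Theta=[b]\Theta$ and the class $[b]\Theta$ is convex.

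The only real obstacle is arranging the hypotheses of min-stability, since that property requires both pairs of first components and second components to be comparable. The argument is tailored to produce this: the inequality $d\leq(d*a)*a$ supplied by (S2) is exactly what secures comparability on the left, while $a\leq d$ (coming directly from convexity's assumption) secures it on the right. Without (S2), or without the $\min$-stability clause in the definition of congruence, the same chain of equivalences would still produce $((d*a)*a,a)\in\Theta$ but leave us unable to bridge back to $(d,a)\in\Theta$.
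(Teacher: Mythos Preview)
Your proof is correct and follows essentially the same route as the paper's: derive $((d*a)*a,a)\in\Theta$ via two applications of the $*$-compatibility of $\Theta$ together with $d*c=1$ and $1*a=a$, then invoke (S2) to obtain $d\leq(d*a)*a$ and apply $\min$-stability to the pairs $(d,d)$ and $((d*a)*a,a)$. Apart from a relabelling of variables, this is exactly the paper's argument.
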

	
	\begin{proof}
		If $a,c\in S$, $b,d\in[a]\Theta$ and $b\leq c\leq d$ then by (S2) we obtain 
		\begin{align*}
			& (c*d)*b=1*b=b\leq c\leq(c*b)*b, \\
			& ((c*d)*b,(c*b)*b)\in\Theta
		\end{align*}
		and hence by $\min$-stability of $\Theta$ we have 
		\[
		(b,c)=(\min((c*d)*b,c),\min((c*b)*b,c))\in\Theta,
		\]
		which implies $c\in[b]\Theta=[a]\Theta$.
	\end{proof}

	We now investigate quotients of skew Hilbert algebras and strong skew Hilbert algebras with respect to their congruences.
	
	Let $\mathbf S=(S,\leq,*,1)$ be a skew Hilbert algebra and $\Theta$ be an algebraic congruence on $\bf S$. We define a binary relation $\leq'$ on $S/\Theta$ by
	\[
	[a]\Theta\leq'[b]\Theta\text{ if and only if }[a]\Theta*[b]\Theta=[1]\Theta \,\,\, (a,b\in S).
	\]
	Recall that a {\em poset} $(P,\leq)$ is called up-directed if for any $x,y\in P$ there exists some $z\in P$ with $x,y\leq z$. Hence, every poset having a top element is up-directed.
	
	\begin{definition}\label{strong}
		An algebraic congruence $\Theta$ on a skew Hilbert algebra $\mathbf S=(S,\leq,*,1)$ is called {\em strong} if it satisfies for all $a,b\in S$ the condition
		\[
		[a]\Theta\leq'[b]\Theta\text{ if and only if there exists some }c\in[b]\Theta\text{ with }a\leq c\text{ and }b\leq c.
		\]
		We naturally define the term {\em strong congruence} as being a strong algebraic congruence which is min-stable.
	\end{definition}
	
	\begin{theorem}\label{th10}
		Let $\mathbf S=(S,\leq,*,1)$ be a skew Hilbert algebra, $n\geq1$, $a,a_1,\ldots,a_n,b\in S$ and $\Theta$ be an algebraic congruence on $\bf S$. Then
		\begin{enumerate}
			\item[\rm(i)] $a\leq b$ implies $[a]\Theta\leq'[b]\Theta$.
			\item[\rm(ii)] If $\mathbf S$ is strong, then $\Theta$ is strong.
		\end{enumerate}
		If $\Theta$ is a strong algebraic congruence, then
		\begin{enumerate}
			\item[\rm(iii)] Every class of $\Theta$ is up-directed.
			\item[\rm(iv)] $U([a_1]\Theta,\ldots,[a_n]\Theta)=\{[x]\Theta\mid x\in U(a_1.\ldots,a_n)\}$ in $(S/\Theta,\leq')$.
		\end{enumerate}
		If $\Theta$ is moreover a strong congruence, then
		\begin{enumerate}
			\item[\rm(v)] $(S/\Theta,\leq')$ is a poset.
		\end{enumerate}
	\end{theorem}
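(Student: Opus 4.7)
My plan is to treat the five items in the order listed, since each uses only the tools that precede it. Item (i) is immediate: $a\leq b$ gives $a*b=1$ by (S1), so $[a]\Theta*[b]\Theta=[a*b]\Theta=[1]\Theta$ by the substitution property. For (ii), given a strong $\mathbf S$, I would witness strongness of $\Theta$ by the element $c:=(a*b)*b$: when $a*b\in[1]\Theta$, substitution together with identity (1) yields $c\mathrel\Theta 1*b=b$, so $c\in[b]\Theta$; identity (S2') then gives $a\leq c$, and identity (2) gives $b\leq c$. The converse implication of the biconditional in Definition~\ref{strong} follows from (i) applied to $a\leq c\in[b]\Theta$.

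For (iii), if $a,b$ lie in a common class then $a*b\mathrel\Theta a*a=1$ gives $[a]\Theta\leq'[b]\Theta$, so the strongness hypothesis furnishes a $c\in[b]\Theta$ above both $a$ and $b$, and $c$ sits in the class in question. Item (iv) I would prove by induction on $n$, the inclusion $\supseteq$ being immediate from (i). For the inductive step, the induction hypothesis produces $x\in U(a_{1},\dots,a_{n})$ with $[x]\Theta=[y]\Theta$; strongness applied to $[a_{n+1}]\Theta\leq'[x]\Theta$ then yields $c\in[x]\Theta$ with both $a_{n+1}\leq c$ and $x\leq c$, placing $c$ in $U(a_{1},\dots,a_{n+1})$.

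The core of the argument is (v). Reflexivity is the identity $x*x=1$. Transitivity follows by chaining strongness twice: from $[a]\Theta\leq'[b]\Theta$ I obtain $c\in[b]\Theta$ with $a\leq c$, and applying strongness again to $[c]\Theta=[b]\Theta\leq'[e]\Theta$ yields $g\in[e]\Theta$ with $c\leq g$, whence $a\leq g$ and (i) concludes $[a]\Theta\leq'[e]\Theta$. The hard part---and the only point at which min-stability is used---is antisymmetry. Given $[a]\Theta\leq'[b]\Theta$ and $[b]\Theta\leq'[a]\Theta$, strongness produces $c\in[b]\Theta$ with $a\leq c$ and $d\in[a]\Theta$ with $b\leq d$. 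The two pairs $(a,d),(c,b)\in\Theta$ have comparable first coordinates ($a\leq c$) and comparable second coordinates ($b\leq d$), so min-stability delivers $(a,b)=(\min(a,c),\min(d,b))\in\Theta$, i.e.\ $[a]\Theta=[b]\Theta$. This pairing is the key maneuver: without min-stability there is no visible way to force the two classes to collapse, which explains why (v) genuinely needs the stronger hypothesis that (iii) and (iv) can do without.
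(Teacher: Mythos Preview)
Your proof is correct and follows the paper's overall line, but two steps take a slightly different route.

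For (iv) you argue by induction on $n$, using strongness once per step to absorb $a_{n+1}$ into an element already above $a_1,\dots,a_n$. The paper instead applies strongness to each $[a_i]\Theta\leq'[a]\Theta$ to obtain $b_i\in[a]\Theta$ with $a_i\leq b_i$, and then invokes the up-directedness of $[a]\Theta$ established in (iii) to find a single $c\in[a]\Theta$ above all the $b_i$. Your induction avoids this appeal to (iii), at the cost of re-proving a special case of it inside the induction.

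For antisymmetry in (v) the difference is more interesting. The paper builds a chain $a\leq c\leq d$ with $a,d\in[a]\Theta$ and $c\in[b]\Theta$, and then appeals to the previously proved convexity of congruence classes (which itself uses $\min$-stability) to force $c\in[a]\Theta$. You instead pair $(a,d)$ and $(c,b)$ directly and invoke $\min$-stability on the nose to get $(a,b)\in\Theta$. Your argument is self-contained and slightly shorter; the paper's argument has the virtue of isolating convexity as a reusable lemma. Both are legitimate, and both confirm that $\min$-stability is exactly what is needed for antisymmetry.
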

	
	\begin{proof}
		\
		\begin{enumerate}
			\item[(i)] If $a\leq b$ then $a*b=1$ whence $a*b\mathrel\Theta1$, i.e.\ $[a]\Theta*[b]\Theta=[a*b]\Theta=[1]\Theta$, thus $[a]\Theta\leq'[b]\Theta$.
			\item[(ii)] Suppose that $\mathbf S$ is a strong skew Hilbert algebra. If $[a]\Theta\leq'[b]\Theta$ then $a*b\mathrel\Theta1$ and hence $a\leq(a*b)*b\in[1*b]\Theta=[b]\Theta$. So one can put $c:=(a*b)*b$ and we have also $b\leq c$. If, conversely, there exists some $c\in[b]\Theta$ with $a\leq c$ then according to (ii) we have $[a]\Theta\leq'[c]\Theta=[b]\Theta$.
			
			\medskip
			
			For the following items we have to assume that $\Theta$ is strong algebraic congruence. 
			\item[(iii)] Let $b,c\in[a]\Theta$. Then $[b]\Theta \leq'[c]\Theta$. Hence there exists some $d\in[c]\Theta=[a]\Theta$ such that $b\leq d$ and $c\leq d$.
			\item[(iv)] Assume $[a]\Theta\in U([a_1]\Theta,\ldots,[a_n]\Theta)$. Since $\Theta$ is strong, for all $i\in\{1,\ldots,n\}$ there exists some $b_i\in[a]\Theta$ with $a_i\leq b_i$. Because of (iv), $([a]\Theta,\leq)$ is up-directed and hence there exists some $c\in[a]\Theta$ with $b_1,\ldots,b_n\leq c$. This shows
			\[
			[a]\Theta=[c]\Theta\in\{[x]\Theta\mid x\in U(a_1,\ldots,a_n)\}.
			\]
			The converse inclusion follows from (ii).
			
			\medskip
			
			Now assume that $\Theta$ is moreover $\min$-stable (i.e.\ $\Theta$ turns into a strong congruence).
			\item[(v)] Obviously, $\leq'$ is reflexive. Now assume $[a]\Theta\leq'[b]\Theta$ and $[b]\Theta\leq'[a]\Theta$. Since  $\Theta$ is strong there exists some $c\in[b]\Theta$ with $a\leq c$. Because of $[c]\Theta=[b]\Theta\leq'[a]\Theta$ there exists some $d\in[a]\Theta$ with $c\leq d$. Since $a\leq c\leq d$, $a,d\in[a]\Theta$ and $([a]\Theta,\leq)$ is convex we conclude $c\in[a]\Theta$. Therefore $[a]\Theta=[c]\Theta=[b]\Theta$ which proves antisymmetry of $\leq'$. Finally, let $c\in S$ and assume $[a]\Theta\leq'[b]\Theta$ and $[b]\Theta\leq'[c]\Theta$. Then, from the fact that $\Theta$ is strong we can find some $e\in[b]\Theta$ with $a\leq e$ and because of $[e]\Theta=[b]\Theta\leq'[c]\Theta$ some $f\in[c]\Theta$ with $e\leq f$. From $a\leq e\leq f$ we have $a\leq f$ which implies $[a]\Theta\leq'[f]\Theta=[c]\Theta$ by (ii), proving transitivity of $\leq'$.
		\end{enumerate}
	\end{proof}
	
	From (iv) we conclude that if $(S,\leq)$ satisfies the Ascending Chain Condition (in particular, if $S$ is finite) then every class 
	of a strong congruence $\Theta$ has a greatest element.
	
	However this is not true in general (see the following example).
	
	\begin{example}
		Let $\mathbf S=(S,\leq,1)$ denote the poset with top element $1$ visualized in Fig.~6
		
		\vspace*{10mm}
		
		\begin{center}
			\setlength{\unitlength}{7mm}
			\begin{picture}(8,6)
				\put(3,1){\circle*{.3}}
				\put(5,1){\circle*{.3}}
				\put(1,3){\circle*{.3}}
				\put(3,3){\circle*{.3}}
				\put(5,3){\circle*{.3}}
				\put(5,5){\circle*{.3}}
				\put(3,7){\circle*{.3}}
				\put(7,3){\circle*{.3}}
				\put(3,1){\line(-1,1)2}
				\put(3,1){\line(0,1)6}
				\put(3,1){\line(1,1)2}
				\put(5,3){\line(0,1)2}
				\put(5,1){\line(1,1)2}
				\put(1,3){\line(1,2)2}
				\put(7,3){\line(-1,1)4}
				\put(5,1){\line(-2,1)4}
				\put(2.85,.25){$a$}
				\put(4.85,.25){$b$}
				\put(.4,2.85){$c$}
				\put(3.3,2.85){$d$}
				\put(5.3,2.85){$e$}
				\put(7.3,2.85){$f$}
				\put(4.85,5.5){$g$}
				\put(2.8,7.4){$1$}
				\put(3.2,-0.9){{\rm Fig.~6}}
			\end{picture}
		\end{center}
		
		\vspace*{6mm}
		
		and $*$ the binary operation on $S$ defined by
		\[
		\begin{array}{c|cccccccc}
			* & a & b & c & d & e & f & g & 1 \\
			\hline
			a & 1 & f & 1 & 1 & 1 & f & 1 & 1 \\
			b & e & 1 & 1 & d & e & 1 & 1 & 1 \\
			c & a & b & 1 & d & e & f & g & 1 \\
			d & a & b & c & 1 & e & f & g & 1 \\
			e & a & b & c & d & 1 & f & 1 & 1 \\
			f & a & b & c & d & e & 1 & 1 & 1  \\
			g & a & b & c & d & e & f & 1 & 1 \\
			1 & a & b & c & d & e & f & g & 1
		\end{array}
		\]
		Then $(S,\leq,*,1)$ is a skew Hilbert algebra which is not strong since
		\[
		a\not\leq b=f*b=(a*b)*b.
		\]
		Moreover,
		\[
		\Theta:=\{a,b\}^2\cup\{c\}^2\cup\{d,e,f,g,1\}^2
		\]
		is a congruence on $\mathbf S$ and $([a]\Theta,\leq)=(\{a,b\},\leq)$ has no greatest element. From {\rm(iv)} of Theorem~\ref{th10} we conclude that $\Theta$ is not strong.
	\end{example}
	
	\begin{theorem}
		Let $\mathbf S=(S,\leq,*,1)$ be a skew Hilbert algebra and $\Theta$ an algebraic congruence on $\mathbf S$ such that every class of $\Theta$ satisfies the Ascending Chain Condition. Then $\Theta\in\Con\mathbf S$.
	\end{theorem}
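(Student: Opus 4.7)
The proof reduces to verifying that $\Theta$ is min-stable, since $\Theta$ is already an algebraic congruence by hypothesis. Take $(a, b), (c, d) \in \Theta$ with $a$ comparable to $c$ and $b$ comparable to $d$; by symmetry we may assume $a \leq c$. If $b \leq d$ then $(\min(a,c), \min(b,d)) = (a, b) \in \Theta$ and we are done. In the non-trivial case $d \leq b$, the task is to prove $(a, d) \in \Theta$. First, exploiting the algebraic congruence property: from $c \mathrel\Theta d$ and $a \leq c$ we obtain $a * d \mathrel\Theta a * c = 1$, and from $a \mathrel\Theta b$ and $d \leq b$ we obtain $d * a \mathrel\Theta d * b = 1$, so both $a * d, d * a \in [1]\Theta$.

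The plan is then to produce, using ACC, a common element $w^* \in [a]\Theta \cap [d]\Theta$, for then $a \mathrel\Theta w^* \mathrel\Theta d$. Consider the iterate $T^u(x) := (u * x) * x$; by the basic inequality $z \leq u * z$ (property (2) derived from (S4)), one has $x \leq T^u(x)$ unconditionally, and whenever $u * x \mathrel\Theta 1$ the iterate $T^u(x)$ lies in the same $\Theta$-class as $x$. Setting $x_0 := a$, $x_{n+1} := T^d(x_n)$ and $y_0 := d$, $y_{n+1} := T^a(y_n)$, a straightforward induction using $d * x_n \mathrel\Theta d * a \mathrel\Theta 1$ and $a * y_n \mathrel\Theta a * d \mathrel\Theta 1$ gives $x_n \mathrel\Theta a$ and $y_n \mathrel\Theta d$ for all $n$, while monotonicity follows from the basic inequality. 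By the ACC hypothesis applied to $[a]\Theta$ and $[d]\Theta$ respectively, both chains stabilize at some $x^* \in [a]\Theta$ and $y^* \in [d]\Theta$ satisfying the fixed-point equations $(d * x^*) * x^* = x^*$ and $(a * y^*) * y^* = y^*$.

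To bridge the two classes one uses the scaffolding elements $b \in [a]\Theta$ (with $d \leq b$) and $c \in [d]\Theta$ (with $a \leq c$): these already provide comparabilities that allow (S2) to deliver monotonicity in iterations of the form $x \mapsto (x * c) * c$ and $y \mapsto (y * b) * b$, which, unlike $T^d$ and $T^a$, actually swap the $\Theta$-class of their argument. Interleaving these class-swapping iterations beyond $x^*$ and $y^*$ yields a single non-decreasing zigzag chain whose even-indexed terms lie in $[a]\Theta$ and odd-indexed terms in $[d]\Theta$; the ACC on both classes forces each subsequence to stabilize, and since the full chain is non-decreasing the two subsequential limits must coincide at a single element $w^* \in [a]\Theta \cap [d]\Theta$. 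Hence $(a, d) \in \Theta$. The principal obstacle is this collapse step: the equation $(u * v) * v = v$ does not on its own force $u \leq v$ in a general skew Hilbert algebra, so the careful orchestration of the swap iterations is crucial, ensuring at every stage that the auxiliary element ($b$, $c$, or a suitable iterate thereof) is dominated by the current approximant so that (S2) applies; this is precisely where the initial inequalities $a \leq c$ and $d \leq b$ provided by the hypothesis of min-stability become indispensable.
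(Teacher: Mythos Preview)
Your overall strategy --- build a non-decreasing chain that alternates between $[a]\Theta$ and $[d]\Theta$, then use ACC to force it to stabilize at a common element --- is exactly the paper's strategy. But your execution has a genuine gap in the ``bridging'' step, and the first phase is a detour that contributes nothing.

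The first phase (iterating $T^d$ on $a$ and $T^a$ on $d$) is correct as far as it goes: monotonicity holds by the basic inequality $x\leq z*x$, class membership is preserved, and ACC gives fixed points $x^*,y^*$. But, as you yourself note, the fixed-point equations $(d*x^*)*x^*=x^*$ and $(a*y^*)*y^*=y^*$ yield nothing in a non-strong skew Hilbert algebra; these two chains never interact, so this phase can be deleted.

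The real work is the zigzag, and here your construction is not actually specified. Your swap map $x\mapsto (x*c)*c$ needs $c\leq x$ for (S2) to give $x\leq(x*c)*c$; you acknowledge this (``the auxiliary element \ldots\ is dominated by the current approximant''), but you never establish it. Starting from $a$ (or from your $x^*$) there is no reason to have $c\leq x$. Appealing to ``a suitable iterate thereof'' just names the problem without solving it. The paper's fix is to run the zigzag with \emph{dynamically updated} auxiliaries: from $a_0:=a$, $c_0:=c$ with $a_0\leq c_0$, set $a_{k+1}:=(c_k*a_k)*a_k$ and $c_{k+1}:=(a_{k+1}*c_k)*c_k$. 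Then (S2), applied with the \emph{current} inequality $a_k\leq c_k$ (respectively $c_k\leq a_{k+1}$), gives $c_k\leq a_{k+1}$ (respectively $a_{k+1}\leq c_{k+1}$), so the single chain $a_0\leq c_0\leq a_1\leq c_1\leq\cdots$ is non-decreasing. Class membership uses $(c*a)*a\mathrel\Theta(d*b)*a=1*a=a$ and $(a*c)*c=1*c=c$ (and their iterated analogues). ACC on $[a]\Theta$ then forces the chain to stabilize, at which point some $a_m=c_m$ or $a_{m+1}=c_m$, giving $a\mathrel\Theta c\mathrel\Theta d$. Note the swap map here is $x\mapsto (c_k*x)*x$, not $x\mapsto(x*c)*c$; this orientation is what makes the (S2) hypothesis line up with the inequality you actually have.
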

	
	\begin{proof}
		Suppose $(a,b),(c,d)\in\Theta$ where $a$ and $c$ are comparable with each other and $b$ and $d$ are comparable with each other. We have the following four possibilities:
		\begin{itemize}
			\item[(a)] $a\leq c$ and $b\leq d$,
			\item[(b)] $c\leq a$ and $d\leq b$,
			\item[(c)] $a\leq c$ and $d\leq b$,
			\item[(d)] $c\leq a$ and $b\leq d$.
		\end{itemize}
		It is evident that in the first two cases $(\min(a,c),\min(b,d))\in\Theta$. \\
		Now consider case (c). \\
		We have $a\leq c$ and $d\leq b$. Put $a_0:=a$ and $c_0:=c$. Then $a_0\mathrel\Theta a$, $c_0\mathrel\Theta c$ and $c_0\geq a_0$. If $c_0=a_0$ then $a=a_0=c_0=c\mathrel\Theta d$ and we are done. Otherwise we have $c_0>a_0$. Put $a_1:=(c_0*a_0)*a_0$. Then
		\[
		a_1\mathrel\Theta(c*a)*a\mathrel\Theta(d*b)*a=1*a=a
		\]
		and $a_1\geq c_0$ according to (S2). If $a_1=c_0$ then $a\mathrel\Theta a_1=c_0=c\mathrel\Theta d$ and we are done. Otherwise we have $a_1>c_0$. Put $c_1:=(a_1*c_0)*c_0$. Then
		\[
		c_1\mathrel\Theta(a*c)*c=1*c=c
		\]
		and $c_1\geq a_1$ according to (S2). If $c_1=a_1$ then $a\mathrel\Theta a_1=c_1\mathrel\Theta c\mathrel\Theta d$ and we are done. Otherwise we have $c_1>a_1$. Put $a_2:=(c_1*a_1)*a_1$. Then
		\[
		a_2\mathrel\Theta(c*a)*a\mathrel\Theta(d*b)*a=1*a=a
		\]
		and $a_2\geq c_1$ according to (S2). By going on in this way we get a chain of the form
		\begin{align*}
			\textrm{(*)} \quad a_0<c_0<a_1<c_1<a_2<\cdots
		\end{align*}
		where $a_0:=a$, $c_0:=c$ and
		\begin{align*}
			a_k & :=(c_{k-1}*a_{k-1})*a_{k-1}, \\
			c_k & :=(a_k*c_{k-1})*c_{k-1}
		\end{align*}
		for $k>0$. Moreover, $a_k\mathrel\Theta a$ and $c_k\mathrel\Theta c$ for $k\geq0$. If the chain (*) would be infinite then
		\[
		a_0<a_1<a_2<\cdots
		\]
		would be an infinite ascending chain in $([a]\Theta,\leq)$ contradicting the assumption that this poset satisfies the Ascending Chain Condition. Hence there exists some $m\geq0$ such that either $c_m=a_m$ or $a_{m+1}=c_m$. In the first case we have
		\[
		a\mathrel\Theta a_m=c_m\mathrel\Theta c\mathrel\Theta d
		\]
		whereas in the second case
		\[
		a\mathrel\Theta a_{m+1}=c_m\mathrel\Theta c\mathrel\Theta d.
		\]
		This shows $a\mathrel\Theta d$ in case (c). Case (d) is symmetric to case (c).
	\end{proof}
	
	From the preceding theorem  we obtain if $(S,\leq)$ satisfies the Ascending Chain Condition (in particular, if $S$ is finite) then every algebraic congruence on $\mathbf S$ is a congruence on $\mathbf S$. Moreover, if $\mathbf S$ is in addition a strong skew Hilbert algebra then every algebraic congruence on $\mathbf S$ is a strong congruence on $\mathbf S$.
	
	
	
	

	We are now going to show that although the class of strong skew Hilbert algebras does not form a variety, every of its members is weakly regular.
	
	In analogy to the lattice case we define:
	
	\begin{definition}\label{def2}
		Let $\mathbf S=(S,\leq,*,1)$ be a skew Hilbert algebra and $F$ a subset of $S$ containing $1$. We say that
		\begin{enumerate}[{\rm(a)}] 
			\item $F$ is a {\em $*$-filter} of $\mathbf S$ if it satisfies the following condition for all $x,y,z,v\in S$:
			\begin{itemize}
				\item[{\rm(F1)}] if $x*y,y*x,z*v,v*z\in F$ then $(x*z)*(y*v) \in F$.
			\end{itemize}
			\item $F$ is a {\em filter} of $\mathbf S$ if it is a $*$-filter of $\mathbf S$ and satisfies the following condition for all $x,y,z,v\in S$:
			\begin{itemize}
				\item[{\rm(F2)}] if $x*y,y*x,z*v,v*z\in F$, $x$ and $z$ are comparable with each other, and $y$ and $v$ are comparable with each other then $\min(x,z)*\min(y,v)\in F$.
			\end{itemize}
			\item $F$ is a {\em strong $*$-filter} of $\mathbf S$ if it is a $*$-filter of $\mathbf S$ and satisfies the following condition for all $x,y\in S$:
			\begin{itemize}
				\item[{\rm(F3)}] if $x*y\in F$ then there exists some $z\in S$ such that $x,y\leq z$ and $z*y\in F$.
			\end{itemize}
			\item F is a {\em strong filter} of $\bf S$ if it is a filter satisfying {\rm (F3)}.
		\end{enumerate}

		Let $\Fil\mathbf S$ denote the set of all filters of $\mathbf S$. It is elementary to check that for every skew Hilbert algebra $\mathbf S$, $(\Con\mathbf S,\subseteq)$ and $(\Fil\mathbf S,\subseteq)$ are complete lattices. For any subset $M$ of $S$ put
		\[
		\Phi(M):=\{(x,y)\in S^2\mid x*y,y*x\in M\}.
		\]
	\end{definition}
	
The relationship between congruences and filters in strong skew Hilbert algebras is magnified by the following theorems and corollaries.
	
	\begin{theorem}\label{th3}
		Let $\mathbf S=(S,\leq,*,1)$ be a skew Hilbert algebra and $\Theta$ a strong congruence on $\mathbf S$. Then
			\[
			(x,y)\in\Theta\text{ if and only if }x*y,y*x\in[1]\Theta,
			\]
			i.e.\ $\Phi([1]\Theta)=\Theta$.
	\end{theorem}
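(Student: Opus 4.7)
The forward direction is immediate: since $\Theta$ is an algebraic congruence (so $*$-preserving) and (S1) gives $x*x\approx y*y\approx 1$, from $(x,y)\in\Theta$ I would conclude $x*y\mathrel\Theta y*y=1$ and $y*x\mathrel\Theta x*x=1$, whence $x*y,y*x\in[1]\Theta$.

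For the substantive converse, suppose $x*y,y*x\in[1]\Theta$. The plan is to translate this into the statements $[x]\Theta\leq'[y]\Theta$ and $[y]\Theta\leq'[x]\Theta$ (by the definition of $\leq'$), and then iterate the existential clause of the strongness axiom three times to produce a chain of elements which is sandwiched between two points of $[x]\Theta$ but also contains a point of $[y]\Theta$; convexity of congruence classes (proved in the preceding theorem, which applies since every strong congruence is in particular min-stable, hence in $\Con\mathbf S$) will then force $x\mathrel\Theta y$.

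In detail, I would first apply strongness to $[y]\Theta\leq'[x]\Theta$ to obtain some $c\in[x]\Theta$ with $x,y\leq c$. Because $c\mathrel\Theta x$ and $*$ is preserved by $\Theta$, I compute $c*y\mathrel\Theta x*y\mathrel\Theta 1$, so $c*y\in[1]\Theta$ and hence $[c]\Theta\leq'[y]\Theta$; a second application of strongness yields $e\in[y]\Theta$ with $c,y\leq e$. Since $e\mathrel\Theta y$, I then have $e*c\mathrel\Theta y*x\mathrel\Theta 1$, giving $[e]\Theta\leq'[c]\Theta$; a third application supplies $e'\in[c]\Theta=[x]\Theta$ with $e,c\leq e'$. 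Now $c\leq e\leq e'$ with both $c$ and $e'$ in $[x]\Theta$, so by convexity of $[x]\Theta$ I obtain $e\in[x]\Theta$. Combined with $e\in[y]\Theta$, this yields $[x]\Theta=[y]\Theta$, i.e.\ $(x,y)\in\Theta$, completing the equivalence $\Phi([1]\Theta)=\Theta$.

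The crux I expect to be least obvious is the need for \emph{three} nested applications of the strong property: a single application produces only a common upper bound of $x$ and $y$ inside $[x]\Theta$, which is not enough on its own to pin down $y$ in $[x]\Theta$. The idea is to keep bouncing between the two classes---each hop converts an $\leq'$-inequality into an actual $\leq$-relation in $S$---until we have an element of $[y]\Theta$ trapped between two elements of $[x]\Theta$, at which point convexity delivers the conclusion.
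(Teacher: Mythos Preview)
Your argument is correct and follows the same underlying idea as the paper, but the paper packages the converse more concisely: it simply observes that $x*y,y*x\in[1]\Theta$ means $[x]\Theta\leq'[y]\Theta\leq'[x]\Theta$ and then invokes the antisymmetry of $\leq'$, which was already established in the preceding Theorem~\ref{th10}\,(v). That antisymmetry proof is exactly your ``bouncing between classes plus convexity'' trick, so you are essentially inlining it.

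One small redundancy worth noting: you do not need the third application of strongness. After your second step you already have $y\leq c\leq e$ with $y,e\in[y]\Theta$ and $c\in[x]\Theta$; convexity of $[y]\Theta$ immediately gives $c\in[y]\Theta$, hence $[x]\Theta=[c]\Theta=[y]\Theta$. Two hops suffice, not three.
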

	
	\begin{proof}
For $a,b\in S$ the following are equivalent:
			\begin{align*}
				(a,b) & \in\Phi([1]\Theta), \\
				a*b,b*a & \in[1]\Theta, \\
				[a]\Theta & \leq'[b]\Theta\leq'[a]\Theta, \\
				[a]\Theta & =[b]\Theta, \\
				(a,b) & \in\Theta.
			\end{align*}
	\end{proof}
	
	\begin{corollary}
		Let $\mathbf S=(S,\leq,*,1)$ be a skew Hilbert algebra and $\Theta \in \Con \bf S$. Then
		\begin{enumerate}[{\rm(a)}] 
			\item If $\Theta$ is strong, then $[1]\Theta$ is a strong filter of $\bf S$.
			\item If $\mathbf S$ is strong, then $[1]\Theta$ is a strong filter of $\bf S$.
		\end{enumerate}
	\end{corollary}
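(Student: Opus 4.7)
The plan is to derive everything from Theorem~\ref{th3}, which gives $\Phi([1]\Theta)=\Theta$ whenever $\Theta$ is a strong congruence, together with the substitution and $\min$-stability properties of $\Theta$. Clearly $1\in[1]\Theta$, so only conditions (F1), (F2) and (F3) of Definition~\ref{def2} need to be verified.

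For (a), assume $\Theta$ is strong. For (F1), suppose $x*y,y*x,z*v,v*z\in[1]\Theta$. By Theorem~\ref{th3} this means $(x,y),(z,v)\in\Theta$, so the substitution property of the algebraic congruence $\Theta$ with respect to $*$ gives $(x*z,y*v)\in\Theta$, and applying Theorem~\ref{th3} again yields $(x*z)*(y*v)\in[1]\Theta$. For (F2), the same translation through Theorem~\ref{th3} together with the $\min$-stability of $\Theta$ produces $(\min(x,z),\min(y,v))\in\Theta$, hence $\min(x,z)*\min(y,v)\in[1]\Theta$.

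The main step, and the one I expect to be the crux, is (F3). Suppose $x*y\in[1]\Theta$. Then
\[
[x]\Theta*[y]\Theta=[x*y]\Theta=[1]\Theta,
\]
so $[x]\Theta\leq'[y]\Theta$ in the quotient ordering of Definition~\ref{strong}. Since $\Theta$ is strong, this is exactly the situation where the definition guarantees the existence of some $c\in[y]\Theta$ with $x\leq c$ and $y\leq c$. From $(c,y)\in\Theta$ and the substitution property we obtain $(c*y,y*y)=(c*y,1)\in\Theta$, i.e.\ $c*y\in[1]\Theta$. Thus $z:=c$ witnesses (F3).

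For (b), assume $\mathbf S$ is a strong skew Hilbert algebra. By Theorem~\ref{th10}(ii) any algebraic congruence on $\mathbf S$ is strong, so in particular $\Theta\in\Con\mathbf S$ is a strong algebraic congruence; combined with the $\min$-stability that is built into the definition of a congruence on a skew Hilbert algebra, $\Theta$ is a strong congruence, and (a) yields that $[1]\Theta$ is a strong filter. No additional ingredient is needed beyond the reduction to (a).
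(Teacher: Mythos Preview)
Your proof is correct and follows essentially the same route as the paper: translate membership in $[1]\Theta$ into pairs of $\Theta$ via Theorem~\ref{th3}, then use the substitution property for (F1), $\min$-stability for (F2), and the defining property of a strong congruence (Definition~\ref{strong}) for (F3); part (b) reduces to (a) by Theorem~\ref{th10}(ii). Your write-up is in fact more explicit than the paper's, which handles (F2) the same way and simply asserts that (F3) ``follows from Theorem~\ref{th3}'' without spelling out the appeal to Definition~\ref{strong} that you make clear.
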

	
	\begin{proof}
		Let $a,b,c,d\in S$.
		\begin{enumerate}[(a)]
			\item (F2) Assume $a*b,b*a,c*d,d*c\in[1]\Theta$, $a$ and $c$ are comparable with each other and $b$ and $d$ are comparable with each other. Then $(a,b),(c,d)\in\Theta$ according to Theorem~\ref{th3}. Because of the $\min$-stability of $\Theta$ we have
			\[
			(\min(a,c),\min(b,d))\in\Theta
			\]
			whence
			\[
			\min(a,c)*\min(b,d)\in[\min(a,c)*\min(a,c)]\Theta=[1]\Theta.
			\]
			(F3) Follows from Theorem \ref{th3}.
			\item This follows from (a) and Theorem~\ref{th10} (ii).
		\end{enumerate}
	\end{proof}
	
	We have shown that every congruence $\Theta$ on a strong skew Hilbert algebra is fully determined by its $1$-class $[1]\Theta$. Hence we conclude
	
	\begin{corollary}
		Strong skew Hilbert algebras are weakly regular.
	\end{corollary}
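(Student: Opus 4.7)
The plan is to read off weak regularity directly from the machinery built just above the corollary. Fix a strong skew Hilbert algebra $\mathbf S=(S,\leq,*,1)$ and suppose $\Theta,\Psi\in\Con\mathbf S$ satisfy $[1]\Theta=[1]\Psi$. I want to conclude $\Theta=\Psi$ by showing that each congruence is reconstructed from its $1$-class via the operator $\Phi$ introduced in Definition~\ref{def2}.

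First I would invoke Theorem~\ref{th10}(ii): since $\mathbf S$ is strong, every algebraic congruence on $\mathbf S$ is automatically strong, and in particular the min-stable congruences $\Theta$ and $\Psi$ are strong congruences in the sense of Definition~\ref{strong}. This is the key use of the hypothesis that $\mathbf S$ itself is strong; without it one cannot guarantee that the ``$[a]\Theta\le'[b]\Theta$ iff there is $c\in[b]\Theta$ above both $a$ and $b$'' condition holds, which is exactly what Theorem~\ref{th3} needs.

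Next I would apply Theorem~\ref{th3} to each of $\Theta$ and $\Psi$: both are strong congruences, so
\[
\Theta=\Phi([1]\Theta)\qquad\text{and}\qquad \Psi=\Phi([1]\Psi).
\]
Since by assumption the two $1$-classes coincide, the right-hand sides coincide, giving $\Theta=\Psi$. Thus every congruence on $\mathbf S$ is determined by its $1$-class, which is precisely weak regularity.

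There is essentially no obstacle here beyond citing the previous results in the right order; all the work has already been done in Theorems~\ref{th10} and~\ref{th3}. The only point that one should be careful with is to note, before quoting Theorem~\ref{th3}, that min-stability of $\Theta$ together with strength of $\mathbf S$ elevates $\Theta$ from an algebraic congruence to a \emph{strong} congruence, which is the hypothesis under which Theorem~\ref{th3} was established.
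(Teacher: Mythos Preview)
Your proposal is correct and follows essentially the same route as the paper: use Theorem~\ref{th10}(ii) to see that on a strong skew Hilbert algebra every (algebraic) congruence is strong, then apply Theorem~\ref{th3} to recover each congruence as $\Phi$ of its $1$-class, and conclude from $[1]\Theta=[1]\Psi$. The paper's proof is terser but invokes exactly the same two results in the same order.
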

	
	\begin{proof}
		Let $\mathbf S=(S,\leq,*,1)$ be a strong skew Hilbert algebra and $\Theta, \Phi \Con\mathbf S$. Thus $\Theta, \Phi$ is strong from Theorem~\ref{th10}. Suppose that $[1]\Theta=[1]\Phi$. Then considering (ii) of Theorem \ref{th3}, we obtain:
			\begin{align*}
				(a,b) \in \Theta \Leftrightarrow a*b,b*a \in [1]\Theta \Leftrightarrow a*b,b*a \in [1]\Phi \Leftrightarrow (a,b) \in \Phi.
		\end{align*} 
	\end{proof}
	
	The preceding corollary is not true in general case (see the following example).
	
	\begin{example}
		Let $\mathbf S=(S,\leq,1)$ denote the poset with top element $1$ visualized in Fig.~7
		
		\vspace*{10mm}
		
		\begin{center}
			\setlength{\unitlength}{7mm}
			\begin{picture}(8,6)
				\put(3,1){\circle*{.3}}
				\put(3,3){\circle*{.3}}
				\put(3,5){\circle*{.3}}
				\put(5,1){\circle*{.3}}
				\put(3,3){\circle*{.3}}
				\put(5,3){\circle*{.3}}
				\put(3,7){\circle*{.3}}
				\put(1,3){\circle*{.3}}
				\put(3,1){\line(0,1)6}
				\put(1,3){\line(1,2)2}
				\put(3,1){\line(-1,1)2}
				\put(5,3){\line(-1,2)2}
				\put(5,1){\line(0,1)2}
				\put(5,1){\line(-1,1)2}
				\put(2.85,.25){$a$}
				\put(4.85,.25){$b$}
				\put(3.3,2.85){$c$}
				\put(3.3,4.85){$f$}
				\put(5.3,2.85){$d$}
				\put(0.3,2.85){$e$}
				\put(2.8,7.4){$1$}
				\put(3.2,-0.9){{\rm Fig.~7}}
			\end{picture}
		\end{center}
		
		\vspace*{6mm}
		
		and $*$ the binary operation on $S$ defined by
		\[
		\begin{array}{c|ccccccc}
			* & a & b & c & d & e & f & 1 \\
			\hline
			a & 1 &d&1&d&1&1&1 \\
			b &e&1&1&1&e&1&1 \\
			c &a&b&1&d&e&1& 1 \\
			d &a&b&c&1&e&f&1 \\
			e &a&b&c&d&1&f&1 \\
			f &a&b&c&d&e&1&1  \\
			1 & a & b & c & d & e &f & 1
		\end{array}
		\]
		Then $(S,\leq,*,1)$ is a skew Hilbert algebra which is not strong since 
			\[
			a\not\leq b=d*b=(a*b)*b.
			\]
			We have a congruence given by
			\[
			\Theta:=\{a\}^2 \cup \{b\}^2 \cup \{c\}^2\cup\{d,e,f,1\}^2.
			\]
			It is readily checked that $[1]\Theta$ is a (strong) filter. However, $a*b,b*a \in [1]\Theta$ but $(a,b) \not\in \Theta$.
	\end{example}
	
	We can prove also the converse of Theorem~\ref{th3}.
	
	\begin{theorem}\label{th4}
		Let $\mathbf S=(S,\leq,*,1)$ be a skew Hilbert algebra and $F$ a $*$-filter of $\mathbf S$. Then $\Phi(F)$ is an algebraic congruence on $\bf S$ and $[1](\Phi(F))=F$.
	\end{theorem}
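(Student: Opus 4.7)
The plan is to verify the four closure properties of an equivalence relation plus the $*$-substitution property, and then compute the $1$-class. Reflexivity and symmetry are immediate: $x\ast x=1\in F$ by the standing convention that $F$ contains $1$, and the defining condition of $\Phi(F)$ is symmetric in its two arguments. So my first paragraph of the write-up will dispatch these two observations in one line each.

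Next I would handle the substitution property for $\ast$. Suppose $(a,b),(c,d)\in\Phi(F)$, so $a\ast b,\, b\ast a,\, c\ast d,\, d\ast c\in F$. Applying (F1) with $(x,y,z,v):=(a,b,c,d)$ yields $(a\ast c)\ast(b\ast d)\in F$, and applying (F1) with the swapped tuple $(x,y,z,v):=(b,a,d,c)$ yields $(b\ast d)\ast(a\ast c)\in F$. Hence $(a\ast c,\,b\ast d)\in\Phi(F)$, which is exactly compatibility with $\ast$.

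The main obstacle is transitivity, because (F1) by itself does not look like a modus-ponens rule, and the standard route via congruence permutability used in Theorem~\ref{th2} is unavailable here (no lattice operations). The trick will be to choose the arguments of (F1) so that the outer $\ast$ collapses to $1$. Concretely, suppose $(a,b),(b,c)\in\Phi(F)$, so $a\ast b,\,b\ast a,\,b\ast c,\,c\ast b\in F$. Applying (F1) with $(x,y,z,v):=(b,a,b,c)$ satisfies the hypothesis verbatim and produces $(b\ast b)\ast(a\ast c)=1\ast(a\ast c)=a\ast c\in F$, invoking identity~(1) derived right after Definition~\ref{def3}. Applying (F1) with $(x,y,z,v):=(b,c,b,a)$ analogously gives $c\ast a\in F$, so $(a,c)\in\Phi(F)$. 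Together with the previous paragraph this shows $\Phi(F)$ is an algebraic congruence on $\mathbf S$.

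Finally, for $[1](\Phi(F))=F$, I would use the chain of equivalences $a\in[1](\Phi(F))\iff(a,1)\in\Phi(F)\iff a\ast 1,\,1\ast a\in F\iff 1,a\in F\iff a\in F$, where the third equivalence uses identities (1) and (3) (namely $1\ast a\approx a$ and $a\ast 1\approx 1$) and the last uses that $1\in F$ by hypothesis.
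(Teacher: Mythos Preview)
Your proof is correct and follows essentially the same route as the paper: reflexivity and symmetry are immediate, compatibility with $\ast$ comes from two applications of (F1), transitivity is obtained by instantiating (F1) so that the outer factor collapses via $b\ast b=1$ and $1\ast x=x$, and the computation of the $1$-class uses identities (1) and (3). The only difference is presentational---you spell out the tuples fed into (F1), whereas the paper writes the resulting expressions $(b\ast b)\ast(a\ast c)$ and $(b\ast b)\ast(c\ast a)$ directly.
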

	
	\begin{proof}
		Let $a,b,c,d\in S$. Evidently, $\Phi(F)$ is symmetric and since $1\in F$ and $x*x\approx1$, it is also reflexive. Let us show that $(a,b),(c,d)\in\Phi(F)$ implies $(a*c,b*d)\in\Phi(F)$. We have $a*b,b*a,c*d,d*c\in F$ by definition. From (F1) we get
		\begin{align*}
			& (a*c)*(b*d)\in F , \\
			& (b*d)*(a*c)\in F
		\end{align*} 
		which yields $(a*c,b*d)\in\Phi(F)$ by definition. Hence $\Phi(F)$ has the substitution property with respect to $*$. If $(a,b),(b,c)\in\Phi(F)$ then
		\begin{align*}
			a*c & =1*(a*c)=(b*b)*(a*c)\in F, \\
			c*a & =1*(c*a)=(b*b)*(c*a)\in F
		\end{align*}
		and hence $(a,c)\in\Phi(F)$. This shows transitivity of $\Phi(F)$. Finally, the following are equivalent:
		\begin{align*}
			a & \in[1](\Phi(F)), \\
			(a,1) & \in\Phi(F), \\
			a*1,1*a & \in F, \\
			1,a & \in F, \\
			a & \in F
		\end{align*}
		showing $[1](\Phi(F))=F$.
	\end{proof}
	
	\begin{corollary}
		Let $\mathbf S=(S,\leq,*,1)$ be a skew Hilbert algebra and $F$ a $*$-filter of $\mathbf S$. Then
		\begin{enumerate}[{\rm(a)}] 
			\item If $F\in\Fil\mathbf S$ then $\Phi(F)\in\Con\mathbf S$.
			\item If $F$ is strong then $\Phi(F)$ is strong.
		\end{enumerate}
	\end{corollary}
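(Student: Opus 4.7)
The plan is to reduce both parts to the preparatory material already established, especially Theorem~\ref{th4} and Theorem~\ref{th10}(i), supplemented by the structural conditions (F2) and (F3) from Definition~\ref{def2}.

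For part (a), Theorem~\ref{th4} already shows that $\Phi(F)$ is an algebraic congruence on $\mathbf S$ whenever $F$ is a $*$-filter, so only $\min$-stability remains. Given $(a,b),(c,d)\in\Phi(F)$ with $a,c$ comparable and $b,d$ comparable, I would unfold the definition of $\Phi(F)$ to get $a*b,b*a,c*d,d*c\in F$. A first application of (F2) with $(x,y,z,v):=(a,b,c,d)$ yields $\min(a,c)*\min(b,d)\in F$. Exploiting the built-in symmetry of the hypotheses, a second application with $(x,y,z,v):=(b,a,d,c)$ yields $\min(b,d)*\min(a,c)\in F$. Together these give $(\min(a,c),\min(b,d))\in\Phi(F)$, which is precisely the $\min$-stability needed for $\Phi(F)\in\Con\mathbf S$.

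For part (b), assume $F$ is strong. Part (a) already yields $\Phi(F)\in\Con\mathbf S$; set $\Theta:=\Phi(F)$. To verify the strong condition from Definition~\ref{strong}, the ``if'' direction is immediate from Theorem~\ref{th10}(i). For the ``only if'' direction, suppose $[a]\Theta\leq'[b]\Theta$. Unraveling the definition of $\leq'$, this means $[a*b]\Theta=[a]\Theta*[b]\Theta=[1]\Theta$, so $(a*b,1)\in\Phi(F)$; combined with identity (1) (which gives $1*(a*b)=a*b$), this forces $a*b\in F$. Now (F3) supplies an element $c\in S$ with $a,b\leq c$ and $c*b\in F$. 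Since $b\leq c$, condition (S1) gives $b*c=1\in F$, so $(c,b)\in\Phi(F)$, i.e.\ $c\in[b]\Theta$. Thus $c$ is the required witness.

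The main obstacle I anticipate is not a technical one but a bookkeeping one in part (b): correctly extracting $a*b\in F$ from $a*b\mathrel\Theta 1$ via identity (1), and then recognising that the element produced by (F3) automatically lies in $[b]\Theta$ because $b\leq c$ trivialises one half of the required $\Phi(F)$-relation. Once these unpackings are in place, both parts close immediately from the hypotheses on $F$.
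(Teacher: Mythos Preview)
Your proof is essentially correct and follows the paper's approach in both parts. One small correction for part~(b): you cannot invoke part~(a) to obtain $\Phi(F)\in\Con\mathbf S$, since part~(a) requires $F\in\Fil\mathbf S$ (i.e.\ condition~(F2)), whereas a strong $*$-filter is only assumed to satisfy (F1) and~(F3). Fortunately this claim is also unnecessary: Definition~\ref{strong} only asks that $\Phi(F)$ be an \emph{algebraic} congruence, and Theorem~\ref{th4} already supplies that from the $*$-filter hypothesis alone. With that adjustment your argument is complete and coincides with the paper's.
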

	
	\begin{proof}
		Let $a,b,c,d\in S$.
		\begin{enumerate}[(a)]
			\item If $(a,b),(c,d)\in\Phi(F)$, $a$ and $c$ are comparable with each other and $b$ and $d$ are comparable with each other then from (F2) we get
			\[
			\min(a,c)*\min(b,d),\min(b,d)*\min(a,c)\in F,
			\]
			i.e.\ $(\min(a,c),\min(b,d))\in\Phi(F)$. This shows that $\Phi(F)$ is $\min$-stable.
			\item Assume $[a]\Phi(F)\leq'[b]\Phi(F)$. Then $(a*b,1)\in\Phi(F)$, i.e.\ $a*b\in F$. From (F3) we get that there exists some $c\in S$ such that $a,b\leq c$ and $c*b\in F$. Since $1=b*c\in F$ we obtain $c\in[b]\Phi(F)$, i.e.\ $\Phi(F)$ is strong.
		\end{enumerate}
	\end{proof}
	
	The following corollary follows from the above theorems and corollaries.
	
	\begin{corollary}
		For every strong skew Hilbert algebra $\mathbf S$ the mappings $\Phi\mapsto[1]\Phi$ and $F\mapsto\Phi(F)$ are mutually inverse isomorphisms between the complete lattices $(\Con\mathbf S,\subseteq)$ and $(\Fil\mathbf S,\subseteq)$.
	\end{corollary}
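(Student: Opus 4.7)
The plan is to assemble the bijection directly from Theorems~\ref{th3} and \ref{th4} together with their two accompanying corollaries, invoking Theorem~\ref{th10}(ii) to discharge the strongness hypothesis that Theorem~\ref{th3} carries with it. The essential point is that for a \emph{strong} skew Hilbert algebra, Theorem~\ref{th10}(ii) upgrades every algebraic congruence to a strong congruence, so the hypothesis of Theorem~\ref{th3} becomes automatic.

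First I would verify that both $\Phi\mapsto [1]\Phi$ and $F\mapsto \Phi(F)$ are well-defined as maps between $(\Con\mathbf{S},\subseteq)$ and $(\Fil\mathbf{S},\subseteq)$. Since $\mathbf{S}$ is strong, Theorem~\ref{th10}(ii) gives that every member of $\Con\mathbf{S}$ is strong; then part~(b) of the corollary immediately following Theorem~\ref{th3} ensures $[1]\Theta$ is a strong filter, hence in particular lies in $\Fil\mathbf{S}$. Conversely, part~(a) of the corollary immediately following Theorem~\ref{th4} yields $\Phi(F)\in\Con\mathbf{S}$ for each $F\in\Fil\mathbf{S}$.

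The mutual inverse relations now drop out at once. Theorem~\ref{th4} gives $[1](\Phi(F))=F$ for every $*$-filter, hence for every filter $F$. Theorem~\ref{th3} gives $\Phi([1]\Theta)=\Theta$ for every strong congruence $\Theta$, which by the above applies to every $\Theta\in\Con\mathbf{S}$. Monotonicity of both maps with respect to inclusion is straightforward from the definitions: $\Theta_1\subseteq\Theta_2$ forces $[1]\Theta_1\subseteq [1]\Theta_2$, and $F_1\subseteq F_2$ forces $\Phi(F_1)\subseteq\Phi(F_2)$ since $\Phi(-)$ is defined by containment of the elements $x*y,y*x$ in its argument.

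Finally, a monotone bijection whose inverse is also monotone is automatically an order isomorphism, and an order isomorphism between complete lattices preserves arbitrary meets and joins; since $(\Con\mathbf{S},\subseteq)$ and $(\Fil\mathbf{S},\subseteq)$ are complete lattices, as recorded in Definition~\ref{def2}, the two maps constitute isomorphisms of complete lattices. There is no genuine obstacle in the argument: the whole proof is a bookkeeping assembly, and the only subtlety worth highlighting is that it is precisely the strongness of $\mathbf{S}$, through Theorem~\ref{th10}(ii), that makes Theorem~\ref{th3} applicable to every congruence and thereby closes the loop.
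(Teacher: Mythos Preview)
Your proof is correct and follows precisely the assembly the paper intends: the paper's own proof is simply the one-line remark ``follows from the above theorems and corollaries,'' and you have spelled out exactly which pieces fit where (Theorem~\ref{th10}(ii) to make every $\Theta\in\Con\mathbf S$ strong, the corollary after Theorem~\ref{th3} for $[1]\Theta\in\Fil\mathbf S$, the corollary after Theorem~\ref{th4} for $\Phi(F)\in\Con\mathbf S$, and Theorems~\ref{th3} and~\ref{th4} for the two inverse identities). One small phrasing point: Theorem~\ref{th10}(ii) literally upgrades an algebraic congruence to a \emph{strong algebraic} congruence, so the conclusion that members of $\Con\mathbf S$ are strong \emph{congruences} uses in addition that they are $\min$-stable by definition---but your application is to $\Con\mathbf S$ throughout, so the argument is sound.
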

	
	We can prove the following result in analogy to the corresponding result for lattice skew Hilbert algebras.
	
	\begin{theorem}\label{th5}
		Let $\mathbf S=(S,\leq,*,1)$ be a skew Hilbert algebra, $F\in\Fil\mathbf S$ and $c\in S$. Then
		\begin{enumerate}[{\rm(i)}]
			\item $F$ is a deductive system of $\mathbf S$,
			\item $F$ is an order filter of $\mathbf S$,
			\item $S*F\subseteq F$,
			\item $c*(F\land c),(F*(F*c))*c\subseteq F$.
		\end{enumerate}
	\end{theorem}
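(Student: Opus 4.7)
My strategy is to transfer each assertion into the language of the congruence $\Theta:=\Phi(F)$. Since $F\in\Fil\mathbf{S}$, the corollary following Theorem~\ref{th4} guarantees $\Theta\in\Con\mathbf{S}$ with $[1]\Theta=F$; in particular $\Theta$ has the substitution property with respect to $*$, is transitive, and is $\min$-stable. All four claims will reduce to showing that the target expression lies in the $1$-class of $\Theta$.

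For (i), suppose $a,a*b\in F$. Then $(a,1),(a*b,1)\in\Theta$; substitution in the first coordinate of $*$ yields $(a*b,1*b)=(a*b,b)\in\Theta$, and transitivity with $(a*b,1)\in\Theta$ gives $(b,1)\in\Theta$, whence $b\in F$. Item (ii) follows immediately from (i): if $a\leq b$ and $a\in F$ then $a*b=1\in F$, and (i) delivers $b\in F$. For (iii), I would invoke the derived identity~(2) from Section~\ref{sec: SHA}, namely $f\leq s*f$ for $s\in S$ and $f\in F$, and then apply (ii) to conclude $s*f\in F$.

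For (iv), the second inclusion is a direct congruence calculation: for $x,y\in F$ one has $(x,1),(y,1)\in\Theta$, so substitution together with identity~(1) gives $y*c\mathrel\Theta 1*c=c$, then $x*(y*c)\mathrel\Theta 1*c=c$, and finally $(x*(y*c))*c\mathrel\Theta c*c=1$, placing the element in $F$. The first inclusion $c*(F\land c)\subseteq F$ is the one where the $\min$-stability of $\Theta$ must enter: reading $f\land c$ as $\min(f,c)$ in accordance with the notational convention introduced just before the theorem (so implicitly $f$ and $c$ are comparable whenever the expression is considered), from $(f,1),(c,c)\in\Theta$ together with the automatic comparability of $1$ with $c$, we obtain $(f\land c,c)\in\Theta$, whence $c*(f\land c)\mathrel\Theta c*c=1$, i.e., $c*(f\land c)\in F$.

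The only mildly delicate step is keeping track of the interpretation of $F\land c$ in the absence of a lattice structure, so that the comparability hypothesis required by $\min$-stability is met whenever meets are invoked; beyond that, everything is a mechanical application of the congruence/filter dictionary supplied by Theorem~\ref{th4} and its corollary.
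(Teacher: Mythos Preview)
Your proof is correct and follows essentially the same route as the paper: both arguments reduce everything to the congruence $\Phi(F)$ and read off each conclusion from the $1$-class. The only cosmetic differences are that the paper handles (iii) via substitution (using $a*b\in[a*1]\Phi(F)=[1]\Phi(F)$) rather than via the inequality $f\leq s*f$ together with (ii), and in (iv) it writes the $\min$-stability step in the compressed form $c*(F\land c)\subseteq[c*(1\land c)]\Phi(F)$ without spelling it out; your more explicit unpacking is fine. One small inaccuracy: the convention identifying $\land$ with $\inf$ in the non-lattice setting actually appears \emph{after} this theorem in the paper, not before, but your interpretation of $F\land c$ is the intended one regardless.
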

	
	\begin{proof}
		We use the fact that the filter $F$ is the $1$-class of the congruence $\Phi(F)$.
		\begin{enumerate}[(i)]
			\item If $a\in F$, $b\in S$ and $a*b\in F$ then
			\[
			b=1*b\in[a*b](\Phi(F))=[1](\Phi(F))=F.
			\]
			\item If $a\in F$, $b\in S$ and $a\leq b$ then $a*b=1\in F$ and hence $b\in F$ by (i).
			\item If $a\in S$ and $b\in F$ then $a*b\in[a*1](\Phi(F))=[1](\Phi(F))=F$.
			\item
			\begin{align*}
				c*(F\land c) & \subseteq[c*(1\land c)](\Phi(F))=[c*c](\Phi(F))=[1](\Phi(F))=F, \\
				(F*(F*c))*c & \subseteq[(1*(1*c))*c](\Phi(F))=[(1*c)*c](\Phi(F))=[c*c](\Phi(F))= \\
				& =[1](\Phi(F))=F.
			\end{align*}
		\end{enumerate}
	\end{proof}
	
	Theorem~\ref{th5} shows that every filter is a deductive system. However, our concept of a filter is rather complicated and it seems that not all the properties of a filter are necessary to prove this assertion. We can prove
	
	\begin{proposition}\label{prop4}
		Let $\mathbf S=(S,\leq,*,1)$ be a strong skew Hilbert algebra and $M$ a subset of $S$ containing $1$ and satisfying $(M*(M*x))*x\subseteq M$ for all $x\in S$. Then $M$ is a deductive system of $\mathbf S$.
	\end{proposition}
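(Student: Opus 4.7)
The plan is to verify the deductive system property directly by exploiting the hypothesis with a clever choice of witnesses from $M$. Fix $a\in M$ and $b\in S$ with $a*b\in M$; the goal is to show $b\in M$.

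The strategy is to represent $b$ itself as an element of $(M*(M*b))*b$. Taking $m_1:=a*b\in M$ and $m_2:=a\in M$, I would consider the element $(m_1*(m_2*b))*b=((a*b)*(a*b))*b$. By (S1) applied to the trivial inequality $a*b\leq a*b$, we have $(a*b)*(a*b)=1$, so this element reduces to $1*b$, which equals $b$ by identity~(1) derived earlier from (S4). Hence $b=((a*b)*(a*b))*b\in (M*(M*b))*b\subseteq M$, and $M$ is a deductive system.

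The main observation is that the argument only uses $x*x\approx 1$ and $1*x\approx x$, both of which hold in every skew Hilbert algebra. Strictly speaking, the hypothesis that $\mathbf S$ is strong is not invoked; the conclusion is slightly cleaner in the strong setting because there $M$ is more closely tied to congruence/filter data (cf.\ Theorem~\ref{th5}(iv)), which motivates the placement of the proposition but is not required for the implication itself. There is no real obstacle to overcome: the whole content of the proof is noticing the right substitution $m_1=a*b$, $m_2=a$, $x=b$, after which the two basic identities collapse $(m_1*(m_2*b))*b$ to $b$.
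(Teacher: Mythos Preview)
Your argument is correct and in fact sharper than the paper's. You write $b=((a*b)*(a*b))*b$ by using only $x*x\approx1$ and $1*x\approx x$, and with the choice $m_1=a*b\in M$, $m_2=a\in M$ this exhibits $b\in(M*(M*b))*b\subseteq M$ directly. As you observe, this route never invokes (S2$'$), so the proposition already holds for arbitrary skew Hilbert algebras.

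The paper proceeds differently: it first shows that $M$ is upward closed (from $a\in M$ and $a\leq b$ it derives $b=(a*(1*b))*b\in(M*(M*b))*b$), and only then handles the deductive step by using the \emph{strong} inequality $a\leq(a*b)*b$ to push $(a*b)*b$ into $M$ via that upward closure, before applying the hypothesis a second time with $m_1=m_2=(a*b)*b$. Thus the paper's proof genuinely consumes the strength hypothesis, while yours does not; your one-line substitution both shortens the argument and strengthens the conclusion.
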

	
	\begin{proof}
		Let $a\in M$ and $b\in S$. We have $1\in M$. If $a\leq b$ then
		\[
		b=1*b=(a*b)*b=(a*(1*b))*b\in(M*(M*b))*b\subseteq M.
		\]
		Hence, if $a*b\in M$ then because of $a\leq(a*b)*b$ we have $(a*b)*b\in M$ which implies
		\[
		b=1*b=(((a*b)*b)((a*b)*b))*b\in(M*(M*b))*b\subseteq M.
		\]
	\end{proof}
	
	Observe that the condition mentioned in Proposition~\ref{prop4} is just the second one from (iv) of Theorem~\ref{th5}.
	
	For the concept of an ideal of a universal algebra which corresponds to our concept of a filter and for the concept of ideal terms the reader is referred to \cite U. In particular, for ideals (alias filters) in permutable and weakly regular varieties see also \cite{CEL} for details.
	
	\begin{definition}
		An {\em ideal term} for lattice skew Hilbert algebras is a term $t(x_1,\ldots,x_n,y_1,$ $\ldots,y_m)$ in the language of lattice skew Hilbert algebras satisfying the identity
		\[
		t(x_1,\ldots,x_n,1,\ldots,1)\approx1.
		\]
	\end{definition}
	
	Of course, there exists an infinite number of ideal terms in lattice skew Hilbert algebras. The following list including four ideal terms is a so-called {\em basis for filters} in lattice skew Hilbert algebras, i.e.\ filters can be characterized by this short list of ideal terms.
	
	Consider the following terms for lattice skew Hilbert algebras $(L,\lor,\land,*,1)$:
	\begin{align*}
		t(x,y,z,u) & :=(x\lor y)\land(z*y)\land u, \\
		t_1 & :=1, \\
		t_2(x_1,x_2,x_3,x_4,y_1,y_2,y_3,y_4) & :=(t(x_1,x_2,y_1,y_2)\lor t(x_3,x_4,y_3,y_4))*(x_2\lor x_4), \\
		t_3(x_1,x_2,x_3,x_4,y_1,y_2,y_3,y_4) & :=(t(x_1,x_2,y_1,y_2)\land t(x_3,x_4,y_3,y_4))*(x_2\land x_4), \\
		t_4(x_1,x_2,x_3,x_4,y_1,y_2,y_3,y_4) & :=(t(x_1,x_2,y_1,y_2)*t(x_3,x_4,y_3,y_4))*(x_2*x_4).
	\end{align*}
	
	\begin{lemma}
		We have
		\begin{align*}
			t(x,y,1,1) & \approx y, \\
			t(x,y,x*y,y*x) & \approx x.
		\end{align*}
	\end{lemma}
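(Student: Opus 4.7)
The plan is to verify both identities by direct computation, using the derived identities (1) and (2) from the excerpt together with (L2), (L4) and standard lattice absorption. No induction or new machinery is needed.

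For the first identity, I would substitute $z=u=1$ into the definition $t(x,y,z,u):=(x\lor y)\land(z*y)\land u$ to obtain
\[
t(x,y,1,1)=(x\lor y)\land(1*y)\land 1.
\]
By the derived identity (1), $1*y\approx y$, so this simplifies to $(x\lor y)\land y\land 1$, which equals $y$ by the absorption law $(x\lor y)\land y\approx y$.

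For the second identity, $t(x,y,x*y,y*x)=(x\lor y)\land((x*y)*y)\land(y*x)$, I would show both inequalities between this expression and $x$. For the direction $x\le(x\lor y)\land((x*y)*y)\land(y*x)$, I would verify each conjunct separately: $x\le x\lor y$ is trivial, $x\le(x*y)*y$ is (L2) rephrased via (S1), and $x\le y*x$ is the derived identity (2). For the reverse direction, I would apply (L4) with the roles of the two variables swapped, giving $(y\lor x)\land(y*x)\approx x$; discarding the middle conjunct from a meet can only enlarge it, so
\[
(x\lor y)\land((x*y)*y)\land(y*x)\;\le\;(x\lor y)\land(y*x)\;=\;x.
\]

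I do not expect a genuine obstacle here: both identities are short rewrite chains from results already established. The only point worth flagging is the recognition that the asymmetric-looking factor $((x*y)*y)$ can simply be dropped after invoking (L4) on the pair $(y,x)$ instead of $(x,y)$, which is what lets the second identity collapse to $x$ so cleanly.
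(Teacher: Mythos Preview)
Your proposal is correct and essentially matches the paper's own argument: both use (1) plus absorption for the first identity, and for the second both rely on (L4) with the variables swapped to collapse $(x\lor y)\land(y*x)$ to $x$, together with (L2) to handle the factor $(x*y)*y$. The only cosmetic difference is that the paper writes the second identity as a single equational chain $((y\lor x)\land(y*x))\land((x*y)*y)=x\land((x*y)*y)=x$, whereas you verify the two inequalities separately.
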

	
	\begin{proof}
		We have
		\[
		t(x,y,1,1)=(x\lor y)\land(1*y)\land1=(x\lor y)\land y=y
		\]
		according to (1) and
		\begin{align*}
			t(x,y,x*y,y*x) & =(x\lor y)\land((x*y)*y)\land(y*x)= \\
			& =((y\lor x)\land(y*x))\land((x*y)*y)=x\land((x*y)*y)=x
		\end{align*}
		according to (L2) and (L4).
	\end{proof}
	
	\begin{lemma}\label{lem1}
		The terms $t_1,\ldots,t_4$ are ideal terms for lattice skew Hilbert algebras.
	\end{lemma}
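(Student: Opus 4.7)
The plan is to verify the defining identity $t_i(\bar x, 1, \ldots, 1) \approx 1$ for each of the four terms in turn, leaning on the immediately preceding lemma to evaluate the auxiliary term $t$ at $y_1=y_2=1$.

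For $t_1$ the claim is trivial. For the remaining three, I would first observe that the first identity of the preceding lemma, namely $t(x,y,1,1)\approx y$, tells us that after substituting $y_1=y_2=y_3=y_4=1$ we have $t(x_1,x_2,1,1)\approx x_2$ and $t(x_3,x_4,1,1)\approx x_4$. Feeding this into the definitions of $t_2$, $t_3$ and $t_4$ reduces the required identities to
\[
(x_2\lor x_4)*(x_2\lor x_4)\approx 1,\quad (x_2\land x_4)*(x_2\land x_4)\approx 1,\quad (x_2*x_4)*(x_2*x_4)\approx 1,
\]
so everything collapses to the single identity $u*u\approx 1$ (applied to three different terms $u$).

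The identity $u*u\approx 1$ is not listed among (L1)--(L4) explicitly, but it is an immediate consequence of (L1): specialising $y$ to $x$ in $x*(x\lor y)\approx 1$ gives $x*(x\lor x)\approx 1$, and since $x\lor x\approx x$ by the lattice axioms one concludes $x*x\approx 1$. I would record this as a one-line preliminary remark and then apply it three times to finish.

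There is no real obstacle: the argument is entirely mechanical once the preceding lemma is invoked. The only minor point to be careful about is that the ``$1$'' we substitute for the $y_i$'s is the constant of the lattice skew Hilbert algebra (the top element), which is precisely the argument that makes $t(x,y,1,1)\approx y$ applicable.
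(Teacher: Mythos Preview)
Your argument is correct and coincides with the paper's own proof: both substitute $y_1=\cdots=y_4=1$, invoke the preceding lemma's identity $t(x,y,1,1)\approx y$ to reduce each of $t_2,t_3,t_4$ to an instance of $u*u\approx1$, and treat $t_1$ as trivial. Your extra line deriving $x*x\approx1$ from (L1) is a harmless clarification of a fact the paper had already established earlier.
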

	
	\begin{proof}
		We have
		\begin{align*}
			t_2(x_1,x_2,x_3,x_4,1,1,1,1) & \approx(t(x_1,x_2,1,1)\lor t(x_3,x_4,1,1))*(x_2\lor x_4)\approx \\
			& \approx(x_2\lor x_4)*(x_2\lor x_4)\approx1, \\
			t_3(x_1,x_2,x_3,x_4,1,1,1,1) & \approx(t(x_1,x_2,1,1)\land t(x_3,x_4,1,1))*(x_2\land x_4)\approx \\
			& \approx(x_2\land x_4)*(x_2\land x_4)\approx1, \\
			t_4(x_1,x_2,x_3,x_4,1,1,1,1) & \approx(t(x_1,x_2,1,1)*t(x_3,x_4,1,1))*(x_2*x_4)\approx \\
			& \approx(x_2*x_4)*(x_2*x_4)\approx1.
		\end{align*}
	\end{proof}
	
	The closedness with respect to ideal terms was also introduced by A.~Ursini (\cite U).
	
	\begin{definition}
		A {\em subset} $A$ of a lattice skew Hilbert algebra $\mathbf L=(L,\lor,\land,*,1)$ is said to be {\em closed} with respect to the ideal terms $t_i(x_1,\ldots,x_n,y_1,\ldots,y_m)$, $i\in I$, if for every $i\in I$, all $x_1,\ldots,x_n\in L$ and all $y_1,\ldots,y_m\in A$ we have $t_i(x_1,\ldots,x_n,y_1,\ldots,y_m)\in A$.
	\end{definition}
	
	Now we prove that the ideal terms listed in Lemma~\ref{lem1} form a basis for filters, i.e.\ filters are characterized as those subsets which are closed with respect to these ideal terms.
	
	\begin{theorem}\label{th6}
		Let $\mathbf L=(L,\lor,\land,*,1)$ be a lattice skew Hilbert algebra and $F\subseteq L$. Then $F\in\Fil\mathbf L$ if and only if $F$ is closed with respect to the ideal terms $t_1,\ldots,t_4$ listed in Lemma~\ref{lem1}.
	\end{theorem}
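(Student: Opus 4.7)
The plan is to exploit the congruence $\Phi(F)$ associated with a filter $F$ (available via Theorem~\ref{th2}) together with the two reduction identities $t(x,y,1,1) \approx y$ and $t(x,y,x*y,y*x) \approx x$ established in the unnamed lemma just above. The first identity, packaged as Lemma~\ref{lem1}, gives the forward direction; the second identity gives the backward direction by supplying the right substitution.

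For the ``only if'' direction, assume $F \in \Fil\mathbf L$. By Theorem~\ref{th2}, $\Phi(F) \in \Con\mathbf L$ and $[1](\Phi(F)) = F$. Fix any $x_1,\dots,x_n \in L$ and any $y_1,\dots,y_m \in F$; then $(y_j, 1) \in \Phi(F)$ for every $j$, so by the substitution property of $\Phi(F)$ we have
\[
t_i(x_1,\dots,x_n,y_1,\dots,y_m) \mathrel{\Phi(F)} t_i(x_1,\dots,x_n,1,\dots,1).
\]
By Lemma~\ref{lem1}, $t_i(x_1,\dots,x_n,1,\dots,1) \approx 1$ for $i = 1,\dots,4$, hence $t_i(\vec x, \vec y) \in [1](\Phi(F)) = F$. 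This proves closure under each $t_i$ (the case $t_1 = 1$ is immediate).

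For the converse, assume $F$ is closed with respect to $t_1,\dots,t_4$. Closure under $t_1$ yields $1 \in F$. To verify the filter conditions, suppose $a*b, b*a, c*d, d*c \in F$. Apply each of $t_2, t_3, t_4$ with the substitution $(x_1,x_2,x_3,x_4) := (a,b,c,d)$ and $(y_1,y_2,y_3,y_4) := (a*b, b*a, c*d, d*c)$. By the identity $t(x,y,x*y,y*x) \approx x$ we obtain $t(x_1,x_2,y_1,y_2) = a$ and $t(x_3,x_4,y_3,y_4) = c$, so the three ideal terms collapse to
\[
t_2 = (a \lor c)*(b \lor d), \quad t_3 = (a \land c)*(b \land d), \quad t_4 = (a*c)*(b*d),
\]
each of which lies in $F$ by hypothesis. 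This is precisely the defining condition of a filter in Definition~\ref{def1}.

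The only mild obstacle is notational: selecting the substitution that makes the internal $t(\cdot,\cdot,\cdot,\cdot)$ subterms reduce to $a$ and $c$, respectively. Once this is set up, both directions follow by routine application of the two identities from the preceding lemma.
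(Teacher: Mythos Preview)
Your proof is correct and follows essentially the same approach as the paper's own proof: the forward direction uses Theorem~\ref{th2} together with the congruence substitution property and Lemma~\ref{lem1}, and the backward direction uses closure under $t_1$ to get $1\in F$ and then the substitution $(y_1,y_2,y_3,y_4)=(a*b,b*a,c*d,d*c)$ combined with the identity $t(x,y,x*y,y*x)\approx x$ to recover the filter defining conditions.
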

	
	\begin{proof}
		If $F\in\Fil\mathbf L$ then $F=[1](\Phi(F))$ according to Theorem~\ref{th2}, and if
		\[
		t_i(x_1,\ldots,x_n,y_1,\ldots,y_m),\quad i\in\{1,\ldots,5\},
		\]
		are the ideal terms listed in Lemma~\ref{lem1}, $a_1,\ldots,a_n\in L$ and $b_1,\ldots,b_m\in F$ then
		\[
		t_i(a_1,\ldots,a_n,b_1,\ldots,b_m)\in[t_i(a_1,\ldots,a_n,1,\ldots,1)](\Phi(F))=[1](\Phi(F))=F
		\]
		according to Lemma~\ref{lem1} and hence $F$ is closed with respect to the ideal terms $t_1,\ldots,t_5$. Conversely, assume $F$ to be closed with respect to the ideal terms $t_1,\ldots,t_4$. Then $1=t_1\in F$ and if $a,b,c,d\in L$ and $a*b,b*a,c*d,d*c\in F$ then
		\begin{align*}
			(a\lor c)*(b\lor d) & =(t(a,b,a*b,b*a)\lor t(c,d,c*d,d*c))*(b\lor d)= \\
			& =t_2(a,b,c,d,a*b,b*a,c*d,d*c)\in F, \\
			(a\land c)*(b\land d) & =(t(a,b,a*b,b*a)\land t(c,d,c*d,d*c))*(b\land d)= \\
			& =t_3(a,b,c,d,a*b,b*a,c*d,d*c)\in F, \\
			(a*c)*(b*d) & =(t(a,b,a*b,b*a)*t(c,d,c*d,d*c))*(b*d)= \\
			& =t_4(a,b,c,d,a*b,b*a,c*d,d*c)\in F.
		\end{align*}
		showing $F\in\Fil\mathbf L$.
	\end{proof}
	
	\begin{remark}
		Let us note that the term $t$ from the proof of Theorem~\ref{th6} gives rise to a Maltsev term. Namely, if
		\begin{align*}
			t(x,y,z,u) & :=(x\lor y)\land(z*y)\land u\text{ and} \\
			q(x,y,z) & :=t(x,z,x*y,y*x).
		\end{align*}
		then
		\begin{align*}
			q(x,y,z) & =(x\lor z)\land((x*y)*z)\land(y*x), \\
			q(x,x,z) & =(x\lor z)\land((x*x)*z)\land(x*x)=(x\lor z)\land(1*z)\land1=(x\lor z)\land z=z, \\
			q(x,z,z) & =(x\lor z)\land((x*z)*z)\land(z*x)=((z\lor x)\land(z*x))\land((x*z)*z)= \\
			& =x\land((x*z)*z)=x.
		\end{align*}
		Observe that the Maltsev term $q(x,y,z)$ is different from that in Theorem~\ref{th7}.
	\end{remark}
	
	In the following we write $a\land b\land c$ instead of $\inf(a,b,c)$.
	
	Now we introduce a certain modification of the notion an ideal term (for posets) which need not be defined everywhere. This will be used in the sequel.
	
	\begin{definition}
		A {\em partial ideal term} for skew Hilbert algebras is a partially defined term $T(x_1,\ldots,x_n,y_1,\ldots,y_m)$ in the language of skew Hilbert algebras satisfying the identity
		\[
		T(x_1,\ldots,x_n,1,\ldots,1)\approx1.
		\]
		This language contains also a binary operator $U(x,y)$.
	\end{definition}
	
	Using of the concept of partial ideal terms, we will try to describe filters also in strong skew Hilbert algebras. Similarly as in Lemma~\ref{lem1} we first get a list of three partial ideal terms which will be shown to suffice.
	
	Consider the following partial terms for skew Hilbert algebras $(S,\leq,*,1)$:
	\begin{align*}
		T(x,y,z,u) & :=U(x,y)\land(z*y)\land u, \\
		T_1 & :=1, \\
		T_2(x_1,x_2,x_3,x_4,y_1,y_2,y_3,y_4) & :=(T(x_1,x_2,y_1,y_2)*T(x_3,x_4,y_3,y_4))*(x_2*x_4), \\
		T_3(x_1,x_2,x_3,x_4,y_1,y_2,y_3,y_4) & :=(T(x_1,x_2,y_1,y_2)\land T(x_3,x_4,y_3,y_4))*(x_2\land x_4).
	\end{align*}
	
	\begin{lemma}
		We have
		\begin{align*}
			T(x,y,1,1) & \approx y, \\
			T(x,y,x*y,y*x) & \approx x.
		\end{align*}
	\end{lemma}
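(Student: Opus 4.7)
The strategy is to run the same argument used earlier for the lattice analogue $t$, but now interpreting $U(x,y)\land z_{1}\land\cdots\land z_{k}$ as $\inf(U(x,y)\cup\{z_{1},\ldots,z_{k}\})$ in the sense made precise in Remark~\ref{rem2}, and replacing the roles of (L2) and (L4) by their poset counterparts (S2') and (S4). Since this lemma is introduced in the context of strong skew Hilbert algebras, (S2') together with the derived identities (1)--(3) are at my disposal.

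For the first identity I would begin by applying (1) to rewrite $1*y$ as $y$, so that $T(x,y,1,1)$ becomes $\inf(U(x,y)\cup\{y,1\})$. The top element $1$ contributes nothing to the infimum. Because every member of $U(x,y)$ exceeds $y$ while $y$ itself belongs to the augmented set, the infimum is manifestly $y$, giving $T(x,y,1,1)\approx y$.

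For the second identity I would evaluate $T(x,y,x*y,y*x)=\inf(U(x,y)\cup\{(x*y)*y,\,y*x\})$ in two moves. First, $x$ is a lower bound of this set: it lies below every element of $U(x,y)$ by the definition of the upper cone, the inequality $x\leq y*x$ is the derived identity (2), and $x\leq (x*y)*y$ is precisely the strong axiom (S2'). Second, $x$ is the \emph{greatest} such lower bound: applying (S4) with $x$ and $y$ interchanged yields $L(U(x,y),y*x)=L(x)$, so $x$ is already the infimum of the smaller set $U(x,y)\cup\{y*x\}$. Any lower bound of the larger set is also a lower bound of this smaller one, hence is $\leq x$. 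Therefore $T(x,y,x*y,y*x)=x$, as claimed.

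The only mildly delicate step, and the only place where the strong hypothesis is actually used, is the appeal to (S2') to place $x$ below $(x*y)*y$. In a non-strong skew Hilbert algebra this inequality can fail (compare Example~\ref{ex1}(a), where $a\not\leq(a*b)*b$), so both the existence and the claimed value of the infimum could be in jeopardy; but in the strong setting the bookkeeping of cones goes through with no real obstacle.
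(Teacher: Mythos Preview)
Your proof is correct and follows essentially the same approach as the paper's. Both arguments use identity (1) for the first claim, and for the second both rely on the swapped instance of (S4) (equivalently Remark~\ref{rem2}) to obtain $U(x,y)\land(y*x)=x$, together with (S2') to handle the remaining factor $(x*y)*y$; the only difference is cosmetic, in that you phrase the computation explicitly in terms of lower bounds while the paper writes it as an associated chain of infima.
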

	
	\begin{proof}
		We have
		\[
		T(x,y,1,1)\approx U(x,y)\land(1*y)\land1\approx U(x,y)\land y\approx y
		\]
		according to (1) and Remark~\ref{rem2} and
		\begin{align*}
			T(x,y,x*y,y*x) & \approx U(x,y)\land((x*y)*y)\land(y*x)\approx \\
			& \approx(U(y,x)\land(y*x))\land((x*y)*y)\approx x\land((x*y)*y)\approx x
		\end{align*}
		according to (S2') and Remark~\ref{rem2}.
	\end{proof}
	
	\begin{lemma}\label{lem2}
		The partial terms $T_1,T_2,T_3$ are partial ideal terms for skew Hilbert algebras.
	\end{lemma}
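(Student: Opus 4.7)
The plan is to mimic verbatim the proof of Lemma~\ref{lem1}, only now in the partial-operation setting. The verification is a direct substitution using the identity $T(x,y,1,1)\approx y$ established in the immediately preceding lemma, together with the basic skew-Hilbert identity $a*a\approx 1$ (which is (S1) read at $a\leq a$, already recorded in the text).

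More concretely, I would first dispose of $T_1$: by definition $T_1:=1$, so $T_1\approx 1$ holds trivially. For $T_2$, I would substitute $y_1=y_2=y_3=y_4=1$ into
\[
T_2(x_1,x_2,x_3,x_4,y_1,y_2,y_3,y_4)=(T(x_1,x_2,y_1,y_2)*T(x_3,x_4,y_3,y_4))*(x_2*x_4),
\]
and apply $T(x_i,x_j,1,1)\approx x_j$ twice to rewrite the outer expression as $(x_2*x_4)*(x_2*x_4)$, which equals $1$. For $T_3$, the same substitution turns
\[
T_3(x_1,x_2,x_3,x_4,y_1,y_2,y_3,y_4)=(T(x_1,x_2,y_1,y_2)\land T(x_3,x_4,y_3,y_4))*(x_2\land x_4)
\]
into $(x_2\land x_4)*(x_2\land x_4)=1$.

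The only subtlety worth flagging is that we are working with \emph{partial} terms: the expression $T(x,y,z,u)=U(x,y)\land(z*y)\land u$ involves the meet $U(x,y)\land(z*y)$, which is a priori undefined in a poset. However, at $z=u=1$ the factor $(1*y)=y$ collapses the definition to $U(x,y)\land y$, and this infimum always exists and equals $y$ (indeed $y\in U(x,y)$ so $y\leq U(x,y)\land y\leq y$). Likewise, for $T_3$ at $y_i=1$ the two arguments of the outer $\land$ are $x_2$ and $x_4$, so we only need $x_2\land x_4$ to appear on both sides of the outer $*$, which it does by construction. Hence no definedness issue arises in the required identity, and the three terms qualify as partial ideal terms.
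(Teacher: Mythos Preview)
Your proof is correct and follows essentially the same approach as the paper: direct substitution of $y_i=1$ using $T(x,y,1,1)\approx y$ and the identity $a*a\approx 1$. Your extra paragraph on definedness of the partial terms is a welcome clarification that the paper omits; note also that in the paper's computation for $T_2$ the intermediate expression is written as $(x_1*x_4)*(x_2*x_4)$, whereas your $(x_2*x_4)*(x_2*x_4)$ is the correct reduction.
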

	
	\begin{proof}
		We have
		\begin{align*}
			T_1 & \approx1, \\
			T_2(x_1,x_2,x_3,x_4,1,1,1,1) & \approx(T(x_1,x_2,1,1)*T(x_3,x_4,1,1))*(x_2*x_4)\approx \\
			& \approx(x_1*x_4)*(x_2*x_4)\approx1, \\
			T_3(x_1,x_2,x_3,x_4,1,1,1,1) & \approx(T(x_1,x_2,1,1)\land T(x_3,x_4,1,1))*(x_2\land x_4)\approx \\
			& \approx(x_2\land x_4)*(x_2\land x_4)\approx1.
		\end{align*}
	\end{proof}
	
	Now we define closedness with respect to partial ideal terms.
	
	\begin{definition}
		A {\em subset} $A$ of a skew Hilbert algebra $\mathbf S=(S,\leq,*,1)$ is said to be {\em closed} with respect to the partial ideal terms $T_i(x_1,\ldots,x_n,y_1,\ldots$ $\ldots,y_m)$, $i\in I$, if for every $i\in I$, all $x_1,\ldots,x_n\in S$ and all $y_1,\ldots,y_m\in A$ we have that $T_i(x_1,\ldots,x_n,y_1,\ldots,y_m)$ is defined and $T_i(x_1,\ldots,x_n,y_1,\ldots,y_m)\in A$.
	\end{definition}
	
	Although our ideal terms are only partial, we can prove that every subset of a strong skew Hilbert algebra $\mathbf S$ closed with respect to them is really a filter of $\mathbf S$.
	
	\begin{theorem}
		Let $\mathbf S=(S,\leq,*,1)$ be a strong skew Hilbert algebra and $F$ a subset of $S$ that is closed with respect to the partial ideal terms $T_1,T_2,T_3$ listed in Lemma~\ref{lem2}. Then $F\in\Fil\mathbf S$.
	\end{theorem}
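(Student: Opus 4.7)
The plan is to verify directly the three requirements for $F$ to be a filter from Definition~\ref{def2}(b): namely $1\in F$, condition (F1), and condition (F2). Each of these will be obtained by instantiating the corresponding partial ideal term $T_i$ so that its inner $T$-blocks collapse via the identity $T(x,y,x*y,y*x)\approx x$ from the preceding lemma.

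First, closure under the constant term $T_1$ gives $1\in F$ immediately. To establish (F1), I would fix $a,b,c,d\in S$ with $a*b,\,b*a,\,c*d,\,d*c\in F$ and substitute $(x_1,x_2,x_3,x_4,y_1,y_2,y_3,y_4)=(a,b,c,d,a*b,b*a,c*d,d*c)$ into $T_2$. The collapsing identity applied twice turns the two inner $T$-blocks into $a$ and $c$, so $T_2$ reduces at this instantiation to $(a*c)*(b*d)$, which then lies in $F$ by the closure hypothesis. Condition (F2) is handled analogously with $T_3$: under the additional assumption that $a,c$ are comparable and $b,d$ are comparable, the meets $a\land c=\min(a,c)$ and $b\land d=\min(b,d)$ actually exist in $S$, so $T_3$ is defined at this input and the same collapse yields $\min(a,c)*\min(b,d)\in F$.

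The point deserving attention is the partial nature of the terms: $T_3$ involves meets that need not exist globally in a general strong skew Hilbert algebra, so closure under a partial ideal term should be read as being meaningful precisely at those instantiations where the term is defined. The comparability clauses built into (F2) are exactly what supply the needed definedness in the situation one has to handle, so this dovetails cleanly with the statement to be proved and is a \emph{feature} rather than an obstacle. Apart from this observation, the argument reduces to a routine substitution followed by two appeals to the collapsing identity $T(x,y,x*y,y*x)\approx x$ of the previous lemma.
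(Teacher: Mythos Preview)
Your proposal is correct and follows essentially the same route as the paper: instantiate $T_1$ to get $1\in F$, instantiate $T_2$ at $(a,b,c,d,a*b,b*a,c*d,d*c)$ and use $T(x,y,x*y,y*x)\approx x$ to obtain $(a*c)*(b*d)\in F$ for (F1), and do the same with $T_3$ under the comparability hypotheses to get $\min(a,c)*\min(b,d)\in F$ for (F2). Your remark about the comparability clauses supplying exactly the definedness needed for $T_3$ is a helpful clarification that the paper leaves implicit.
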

	
	\begin{proof}
		We have $1=T_1\in F$. Now assume $a,b,c,d\in S$ and $a*b,b*a,c*d,d*c\in F$. Then
		\begin{align*}
			(a*c)*(b*d) & =(T(a,b,a*b,b*a)*T(c,d,c*d,d*c))*(b*d)= \\
			& =T_2(a,b,c,d,a*b,b*a,c*d,d*c)\in F.
		\end{align*}  
		Moreover, if $a$ and $c$ are comparable with each other and $b$ and $d$ are comparable with each other then we apply the partial term $T_3$ to derive
		\begin{align*}
			\min(a,c)*\min(b,d) & =(T(a,b,a*b,b*a)\land T(c,d,c*d,d*c))*(b\land d)= \\
			& =T_3(a,b,c,d,a*b,b*a,c*d,d*c)\in F.
		\end{align*}
		This shows $F\in\Fil\mathbf S$.
	\end{proof}
	
	\begin{remark}
		Let us consider the partial term $T(x,y,z,u):=U(x,y)\land(z*y)\land u$ from the proof of Lemma~\ref{lem2} and put
		\[
		T_1(x,y,z):=T(x,z,x*y,y*x),
		\]
		i.e.
		\[
		T_1(x,y,z)=U(x,z)\land((x*y)*z)\land(y*x).
		\]
		Of course, this is only a partial term because the infimum in $T_1$ need not exist for some elements from a skew Hilbert algebra $(S,\leq,*,1)$. It is of some interest that in the case of strong skew Hilbert algebras this partial term behaves like a Maltsev term. Namely, we can easily compute
		\begin{align*}
			T_1(x,x,z) & =U(x,z)\land((x*x)*z)\land(x*x)=U(x,z)\land(1*z)\land1=U(x,y)\land z=z, \\
			T_1(x,z,z) & =U(x,z)\land((x*z)*z)\land(z*x)=(U(z,x)\land(z*x))\land((x*z)*z)= \\
			& =x\land((x*z)*z)=x.
		\end{align*}
		Moreover, these expressions $T_1(x,x,z)$ and $T_1(x,z,z)$ are defined for all $x,z\in S $.
	\end{remark}
	
	For every lattice skew Hilbert algebra $\mathbf L=(L,\lor,\land,*,1)$ and every $M\subseteq L$ let $F(M)$ denote the filter of $\mathbf L$ generated by $M$.
	
	The connection between filters generated by a certain subset and congruences on lattice skew Hilbert algebras is described in the following proposition.
	
	\begin{proposition}
		Let $(L,\lor,\land,*,1)$ be a lattice skew Hilbert algebra, $M\subseteq L$ and $a\in L$. Then
		\begin{align*}
			\Phi(F(M)) & =\Theta(M\times\{1\}), \\
			[1](\Theta(M\times\{1\})) & =F(M).
		\end{align*}
		In particular,
		\begin{align*}
			\Phi(F(a)) & =\Theta(a,1), \\
			[1](\Theta(a,1)) & =F(a).
		\end{align*}
	\end{proposition}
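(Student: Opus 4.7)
The strategy is to combine the congruence/filter isomorphism from Corollary~\ref{cor1} with the weak regularity of $\mathcal V$ from Theorem~\ref{th7}: since a congruence on a lattice skew Hilbert algebra is determined by its $1$-class, it is enough to compare $1$-classes. Concretely, I would first settle the second identity $[1](\Theta(M\times\{1\}))=F(M)$, and then deduce $\Phi(F(M))=\Theta(M\times\{1\})$ from it.

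First I would show the containment $\Theta(M\times\{1\})\subseteq\Phi(F(M))$. For every $m\in M$ one has $m*1=1\in F(M)$ and $1*m\approx m\in F(M)$ by identity~(1), so $(m,1)\in\Phi(F(M))$. Since $\Phi(F(M))$ is a congruence on $\mathbf L$ by Theorem~\ref{th2}, it contains $\Theta(M\times\{1\})$. Taking $1$-classes yields
\[
[1](\Theta(M\times\{1\}))\subseteq[1](\Phi(F(M)))=F(M),
\]
again using Theorem~\ref{th2} for the equality. For the reverse inclusion, observe that $[1](\Theta(M\times\{1\}))$ is itself a filter of $\mathbf L$ by Theorem~\ref{th1}, and it contains $M$ (since every $(m,1)\in\Theta(M\times\{1\})$ means $m\in[1](\Theta(M\times\{1\}))$); by the minimality of $F(M)$ we therefore get $F(M)\subseteq[1](\Theta(M\times\{1\}))$. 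This proves the second identity.

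Finally, to obtain the first identity, I invoke weak regularity: $\Phi(F(M))$ and $\Theta(M\times\{1\})$ are two congruences on $\mathbf L$ whose $1$-classes both equal $F(M)$ (by Theorem~\ref{th2} and the identity just proved), hence they coincide. The ``in particular'' statement is the case $M=\{a\}$, where $F(\{a\})=F(a)$ and $\Theta(\{a\}\times\{1\})=\Theta(a,1)$. No step should present a genuine obstacle; the only subtlety worth double-checking is that the $1$-class of \emph{any} congruence on $\mathbf L$ is indeed a filter in the sense of Definition~\ref{def1}, which is precisely the content of Theorem~\ref{th1} and is what makes the minimality argument for $F(M)$ legitimate.
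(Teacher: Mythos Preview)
Your proposal is correct and follows essentially the same route as the paper: both arguments show $M\times\{1\}\subseteq\Phi(F(M))$ to obtain one inclusion, then use that $[1](\Theta(M\times\{1\}))$ is a filter containing $M$ for the other, and finally pass from equality of $1$-classes to equality of congruences. The only cosmetic difference is that the paper phrases the last step via the isomorphism of Corollary~\ref{cor1} rather than invoking weak regularity by name.
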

	
	\begin{proof}
		Since $M\times\{1\}\subseteq\Phi(F(M))$ we have
		\[
		\Theta(M\times\{1\})\subseteq\Phi(F(M))
		\]
		and hence
		\[
		[1](\Theta(M\times\{1\}))\subseteq[1](\Phi(F(M)))=F(M)
		\]
		according to Corollary~\ref{cor1}. Because of $M\subseteq[1](\Theta(M\times\{1\}))$ we have
		\[
		F(M)\subseteq[1](\Theta(M\times\{1\}))
		\]
		and hence
		\[
		\Phi(F(M))\subseteq\Phi([1](\Theta(M\times\{1\})))=\Theta(M\times\{1\})
		\]
		according to Corollary~\ref{cor1}.
	\end{proof}
	
	An analogous result holds for strong skew Hilbert algebras.
	
	\subsection*{Acknowledgements}
	I.~Chajda, K.~Emir, H.~L\"{a}nger, and J.~Paseka gratefully acknowledge support by the Austrian Science Fund (FWF), project I~4579-N, and the Czech Science Foundation (GA\v CR), project 20-09869L, entitled ``The many facets of orthomodularity''. I.~Chajda's and H.~L\"{a}nger's research has been funded by \"OAD, project CZ~02/2019, entitled ``Function algebras and ordered structures related to logic and data fusion''. I.~Chajda gratefully acknowledges support by IGA, project P\v rF~2021~030. D.~Fazio and A.~Ledda gratefully acknowledge support by Regione Autonoma della Sardegna within the project 
	\textquotedblleft Per un'estensione semantica della Logica Computazionale
	Quantistica - Impatto teorico e ricadute implementative\textquotedblright,
	RAS: SR40341, and the support of MIUR within the project PRIN 2017: \textquotedblleft Logic and cognition. Theory, experiments, and applications\textquotedblright, CUP: 2013YP4N3.

Authors' addresses:
	
	Ivan Chajda \\
	Palack\'y University Olomouc \\
	Faculty of Science \\
	Department of Algebra and Geometry \\
	17.\ listopadu 12 \\
	771 46 Olomouc \\
	Czech Republic \\
	ivan.chajda@upol.cz
	
	Kadir Emir \\
	Masaryk University Brno \\
	Faculty of Science \\
	Department of Mathematics and Statistics \\
	Kotl\'a\v rsk\'a 2 \\
	611 37 Brno \\
	Czech Republic \\
	emir@math.muni.cz
	
	Davide Fazio \\
	A.Lo.P.Hi.S Research Group\\
	University of Cagliari\\
	Via Is Mirrionis, 1\\
	09123, Cagliari, Italy\\
	dav.faz@hotmail.it
	
	Helmut L\"anger \\
	TU Wien \\
	Faculty of Mathematics and Geoinformation \\
	Institute of Discrete Mathematics and Geometry \\
	Wiedner Hauptstra\ss e 8-10 \\
	1040 Vienna \\
	Austria, and \\
	Palack\'y University Olomouc \\
	Faculty of Science \\
	Department of Algebra and Geometry \\
	17.\ listopadu 12 \\
	771 46 Olomouc \\
	Czech Republic \\
	helmut.laenger@tuwien.ac.at
	
	Antonio Ledda \\
	A.Lo.P.Hi.S Research Group\\
	University of Cagliari\\
	Via Is Mirrionis, 1\\
	09123, Cagliari, Italy\\
	antonio.ledda@unica.it
	
	Jan Paseka \\
	Masaryk University Brno \\
	Faculty of Science \\
	Department of Mathematics and Statistics \\
	Kotl\'a\v rsk\'a 2 \\
	611 37 Brno \\
	Czech Republic \\
	paseka@math.muni.cz
\end{document}